\providecommand{\U}[1]{\protect\rule{.1in}{.1in}}
\numberwithin{equation}{section}
\newtheorem{theorem}{Theorem}[section]
\newtheorem{lemma}[theorem]{Lemma}
\newtheorem{corollary}[theorem]{Corollary}
\newtheorem{remark}[theorem]{Remark}
\newtheorem{example}[theorem]{Example}
\newtheorem{definition}[theorem]{Definition}
\newtheorem{hypothesis}[theorem]{Hypothesis}
\def\<{\langle}
\def\>{\rangle}
\def\E{\mathbb{E}}
\def\P{\mathbb{P}}
\def\R{\mathbb{R}}
\def\T{\mathbb{T}}
\def\Z{\mathbb{Z}}
\begin{document}
	\title{Scaling Limit and Large Deviation for 3D Globally Modified Stochastic Navier-Stokes Equations with Transport Noise}
	\author{Chang Liu\footnote{Email: liuchang2021@amss.ac.cn. School of Mathematical Sciences, University of Chinese Academy of Sciences, Beijing 100049, China and Academy of Mathematics and Systems Science, Chinese Academy of Sciences, Beijing 100190, China}
		\quad Dejun Luo\footnote{Email: luodj@amss.ac.cn. Key Laboratory of RCSDS, Academy of Mathematics and Systems Science, Chinese Academy of Sciences, Beijing 100190, China and School of Mathematical Sciences, University of Chinese Academy of Sciences, Beijing 100049, China}}
	\maketitle
	
	\vspace{-20pt}
	
	\begin{abstract}
		We consider the globally modified stochastic (hyperviscous)  Navier-Stokes equations with transport noise on 3D torus. We first establish the existence and pathwise uniqueness of the weak solutions, and then show their convergence to the solutions of the deterministic 3D globally modified (hyperviscous) Navier-Stokes equations in an appropriate scaling limit. Furthermore, we prove a large deviation principle for the  stochastic globally modified hyperviscous system.
	\end{abstract}
	
	\textbf{Keywords:} 3D globally modified Navier-Stokes equations, transport noise, well-posedness, scaling limit, large deviation principle
	
	\section{Introduction}\label{sec-introduction}
	Let $\T^3:=\R^3/\Z^3$ be the three-dimensional (3D) torus and we consider on $\T^3$ the following stochastic globally modified (hyperviscous) Navier-Stokes equations (stochastic GMNSE):
	\begin{equation}\label{generalized GMNSE}
		\left\{
		\begin{aligned}
			&du+F_N(\|u\|_{H^{1-\delta}}) u\cdot \nabla u \, dt+d \, \nabla p= -(-\Delta)^\Lambda u\, dt+ \circ d W \cdot \nabla u ,  \\
			& \text{div} \, u=0, \quad u(0)=u_0,
		\end{aligned}
		\right.
	\end{equation}
	where $u$ is the velocity field and $p$ is the pressure, $\Lambda \in [1,2)$; $H^s\, (s\in \R )$ is the usual Soboelv space of divergence-free vector fields on $\T^3$, and $H^0$ will be written as $L^2$. For any fixed $N>0$,  the cut-off function $F_N: (0,\infty] \rightarrow [0,1]$ is defined as $F_N(r):=\min \{1,\frac{N}{r}\}$, and we will take $\delta \in (0,\frac{1}{4})$ if $\Lambda=1$ and $\delta \in [0,\frac{1}{4})$ if $\Lambda \in (1,2)$. Besides, $\circ d$ means the Stratonovich stochastic differential, and the noise $W=W(t,x)$ is Brownian in time and colored in space, with the expression
	\begin{equation}\label{noise-W}
		W= \sqrt{\frac{3\nu}{2}}\sum_{k\in \Z^3_0} \sum_{i=1}^{2} \theta_k \sigma_{k,i}(x) W_t^{k,i},
	\end{equation}
	where $\nu>0$ is a fixed constant, $\theta= \{\theta_k\}_k\in \ell^2(\Z^3_0)$ is a square summable sequence indexed by $\Z^3_0= \Z^3\setminus \{0\}$, $\{\sigma_{k,i}: k \in \Z_0^3, i=1,2\}$ are complex divergence-free vector fields whose precise expressions will be given below, $\{W^{k,i}: k \in \Z_0^3, i=1,2\}$ are independent complex-valued Brownian motions defined on some filtered probability space $(\Omega,\mathcal{F},(\mathcal{F}_t)_t, \P)$. In this paper, we will establish the well-posedness of the equation \eqref{generalized GMNSE} and prove that, under an appropriate scaling limit for the noise, its solution converges weakly to that of a deterministic equation with enhanced dissipation. Additionally, we will present a large deviation principle (LDP) for the hyperviscous stochastic GMNSE, i.e., $\Lambda>1$ in \eqref{generalized GMNSE}.
	
	The deterministic Navier-Stokes equations have been widely studied due to their fundamental role in fluid dynamics, with applications in areas such as weather modeling and aircraft design. In two dimensions, the existence and uniqueness of Leray weak solutions have been well established, see e.g., \cite{KukShi12, Temam}. However, in the 3D case, the uniqueness of Leray weak solutions is unknown due to the challenges posed by nonlinear term; therefore, several different forms of regularized (stochastic) fluid models were introduced, see e.g. \cite{CHOT05, GMNSE06, OlsTiti07, RocZha09}. In \cite{GMNSE06}, Caraballo, Real and Kloeden introduced the following deterministic 3D GMNSE with an external force field $f$:
	\begin{equation}\label{deterministic GMNSE}
		\left\{
		\begin{aligned}
			&\partial_t u +F_N(\|u\|_{H^{1}}) u\cdot \nabla u+\nabla p= \Delta u+ f ,  \\
			& \text{div} \, u=0, \quad u(0)=u_0.
		\end{aligned}
		\right.
	\end{equation}
	Thanks to the cut-off in nonlinearity, they were able to prove the global existence and uniqueness of strong solutions for $u_0 \in V:= H^1$ and $f \in L^2([0,T];L^2)$. Subsequently, they studied in \cite{CRK08} the invariant measures and statistical solutions for the system, attempting to establish connections with global attractors and time-average measures. The authors of  \cite{KLR07} explored the asymptotic behavior of solutions to the 3D GMNSE and proved the existence of pullback $V$-attractors. In \cite{MR09}, Romito exploited the nice properties of the nonlinear term due to the cut-off and demonstrated the uniqueness of weak solutions for equation \eqref{deterministic GMNSE} with $u_0 \in L^2$ and $f\in L^2([0,T];L^2)$.
	
	Stochastic fluid equations with transport-type noise have been studied in the early papers \cite{BCF92, MR04, MR05}, see \cite{BFM16, FGP10, LangCri23} for more recent developments and also \cite{DFV14, FGP11} for other models. We refer to \cite{FP21,FP22,DDHolm 15} for some rigorous derivations of fluid dynamics models with transport-type noise in Stratonovich form. In these equations, transport noise models the effect of turbulent small-scale fluid structures on the large-scale fluid components. In \cite{Gal20}, Galeati considered linear transport equations with Stratonovich transport noise and proved that, under suitable rescaling of the space covariance of the noise, the solutions converge weakly to the unique solution of a deterministic parabolic equation. An interesting feature of this result is that the noise part gives rise to an extra dissipation term. Since then, this method has been applied in many works to study the regularizing effects of transport noise on various models, see e.g. \cite{CL23, FGL21 JEE, FLL24, Luo21, PFH23, QS24} and \cite{FlanLuongo23} for lecture notes on this topic. These results partially confirm Boussinesq's hypothesis \cite{Boussinesq} that turbulent small-scale fluctuations are dissipative on the mean part of fluid. Using the mild formulations of approximate stochastic equations and the deterministic limit equations, one can also establish quantitative convergence estimates \cite{FLD quantitative, LT23 NA, ZH24}. Furthermore, the strong dissipation term emerging from scaling limit of noise has been used to suppress possible blow-up of some nonlinear equations, cf. \cite{Agresti 24, Agr24b, FHLN22, FL21 PTRF, Lange 24, XieGao}. Finally, we remark that in the vorticity formulation of 3D fluid models, physically relevant noise should involve an additional stretching part, for which the rigorous treatment is much more delicate; see \cite{BFL24, BFLT24, ButLuon24, FlanLuo24} for some recent progress.
	
	Inspired by these works, we consider the 3D (hyperviscous) stochastic GMNSE \eqref{generalized GMNSE}. We are mainly concerned with the case $\Lambda=1$ where the cut-off function contains a weaker norm of solutions compared to \eqref{deterministic GMNSE}, but our proof works for general $\Lambda\in [1,2)$. We will first show the existence of solutions to \eqref{generalized GMNSE} which are weak in both analytic and probabilistic sense; we can also prove the pathwise uniqueness and thus obtain probabilistically strong solutions. Then, we aim to establish a scaling limit result under some assumptions on the noise coefficients; since the solutions are defined on the same probability space $(\Omega,\mathcal{F}, \P)$ as the noise $W$, we deduce that the unique solutions to \eqref{generalized GMNSE} converge to that of the deterministic equation, strongly in $L^p(\Omega,\mathcal X)$ for some suitable functional space $\mathcal X$, see Theorem \ref{scaling limit}.
	
	Furthermore, we want to study LDP of \eqref{generalized GMNSE} in the case $\Lambda \in (1,2)$, and the cut-off norm will be taken as $\|u \|_{H^1}$ for simplicity; see Remark \ref{rmk on LDP GMNSE} below for the reason of restricting to such $\Lambda$. We refer to \cite{Varadhan 1984} for a comprehensive introduction to LDP theory and its applications. The weak convergence method, initially developed in \cite{BD2000, BDM08}, has been widely applied in subsequent works to establish LDP for the laws of solutions to various SPDEs, as demonstrated in \cite{CM10, CD19, GL24 LDP}. Our purpose is to apply this method to establish an LDP for the 3D hyperviscous stochastic GMNSE.

	\subsection{Main results}\label{subsec-main results}
	We first introduce some frequently used notations. Recall that $\ell^2=\ell^2(\Z^3_0)$ is the space of square summable real sequences, $H^s(s\in \R)$ is the usual Sobolev space of divergence-free vector fields on $\T^3$, which will be endowed with the norm $\|\cdot \|_{H^s}$ and we often write $\|\cdot\|_{H^0}$ as $\|\cdot\|_{L^2}$. The notation $\< \cdot, \cdot \>$ stands for the inner product in $L^2$ or the duality between elements in $H^s$ and $H^{-s}$. As the equations preserve the spatial average of solutions, we shall assume that these Sobolev spaces consist of vector fields with zero spatial average. Next, the notation $a \lesssim b$ implies that there exists a constant $C>0$, such that $a \leq Cb$; if we want to emphasize the dependence of $C$ on some parameter $\gamma$, then we write $a\lesssim_{\gamma} b$. Finally, to simplify the expression, we will use $\sum_{k,i}$ for $\sum_{k\in \Z^3_0} \sum_{i=1}^{2}$ in the sequel.

	Our first result is the well-posedness of equation \eqref{generalized GMNSE}. To simplify the analysis, we begin by rewriting it in an equivalent but more convenient form. Substituting the noise \eqref{noise-W} into \eqref{generalized GMNSE} and applying Leray's projection operator $\Pi$ to both sides of the equation, we obtain
	$$du+F_N(\|u\|_{H^{1-\delta}}) \Pi(u\cdot \nabla u) \, dt=-(-\Delta)^\Lambda u\, dt+\sqrt{\frac{3\nu}{2}}\sum_{k,i} \theta_k \Pi(\sigma_{k,i} \cdot \nabla u) \circ dW^{k,i}_t $$
	since $u$ is divergence-free. Furthermore, we can write the above equation in It\^o form:
	\begin{equation}\label{Ito form}
		du+F_N(\|u\|_{H^{1-\delta}}) \Pi(u\cdot \nabla u) \, dt= -(-\Delta)^\Lambda u\, dt+\sqrt{\frac{3\nu}{2}}\sum_{k,i}  \theta_k \Pi(\sigma_{k,i} \cdot \nabla u) \, dW_t^{k,i}+S_\theta(u) \, dt,
	\end{equation}
	where $S_\theta(u)$ is the Stratonovich-It\^o corrector and has the following expression:
	\begin{equation}\label{Stheta def}
		S_\theta(u)=\frac{3\nu}{2}\sum_{k,i} \theta_k^2\, \Pi \big[\sigma_{k,i}\cdot \nabla \Pi(\sigma_{-k,i} \cdot \nabla u)\big].
	\end{equation}
	If there is no Leray projection in the identity above, one can verify quite easily that for the $\{\sigma_{k,i}\}_{k,i}$ and $\{\theta_k\}_k$ specified in Section \ref{subsec-sturcture of noise}, $S_\theta(u)= \nu \Delta u$. Although it seems more complicated now, Flandoli and Luo \cite{FL21 PTRF} proved that for smooth vector field $\phi$ and a specific sequence of $\{\theta^n\}_{n\geq 1}\subset \ell^2(\Z^3_0)$, $S_{\theta^n}(\phi)$ converges in $L^2$ to $\frac35 \nu \Delta\phi$ as $n\rightarrow \infty$. Later, the authors of  \cite{Luo23 JDDE} and \cite{LT23 NA} established a quantitative convergence rate for two different choices of $\{\theta^n\}_{n\geq 1}$,  see Theorem \ref{S limit thm} below for instance. Additionally, this operator is symmetric with respect to the $L^2$-inner product of divergence-free vector fields.

	Before presenting the well-posedness result, we define the solution to \eqref{Ito form}, which is strong in the probabilistic sense but weak in the analytic sense.
	
	\begin{definition}\label{def of solutions}
		Let $\Lambda$ and $\delta$ be as mentioned below \eqref{generalized GMNSE}. We say that equation \eqref{Ito form} has a weak solution, if for a given filtered probability space $(\Omega,\mathcal{F},(\mathcal{F}_t),\P)$ and a sequence of independent $(\mathcal{F}_t)$-complex Brownian motions $\{W^{k,i}: k \in \Z_0^3, i=1,2\}$, we can find an $(\mathcal{F}_t)$-progressively measurable process $u$ with trajectories in $L^\infty([0,T],L^2) \cap L^2([0,T];H^\Lambda)$, such that for any divergence-free vector field $\phi\in C^\infty$, the following equality holds $\P$-a.s.:
		$$\aligned
		\<u(t),\phi\>&=\< u_0, \phi\>+ \int_{0}^{t} F_N(\|u(s)\|_{H^{1-\delta}})\<u(s) \cdot \nabla  \phi, u(s)\> \, ds- \int_{0}^{t} \<u(s),(-\Delta)^\Lambda  \phi \> \, ds\\
		&\quad +\int_{0}^{t} \<u(s),S_\theta(\phi) \> \, ds-\sqrt{\frac{3\nu}{2}}\sum_{k,i} \theta_k \int_{0}^{t} \<u(s),\sigma_{k,i} \cdot \nabla \phi\> \, dW^{k,i}_s.
		\endaligned $$
	\end{definition}
	
	Then we can state the well-posedness result for (hyperviscous) stochastic GMNSE  \eqref{Ito form}.
	\begin{theorem}\label{well-posedness thm}
		Given any $u_0 \in L^2$ and $T>0$, for $\Lambda \in [1,2)$, there exists a pathwise unique weak solution $u$ to equation \eqref{Ito form} in the sense of Definition \ref{def of solutions}.
	\end{theorem}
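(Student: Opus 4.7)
The plan is the standard Galerkin--Skorokhod--Itô compactness scheme for existence, combined with an energy-based pathwise uniqueness argument exploiting the gap between the cutoff norm $H^{1-\delta}$ and the dissipation norm $H^\Lambda$. \emph{Galerkin and a priori bounds.} Let $\Pi_n$ denote the orthogonal projection of $L^2$ onto Fourier modes $|k|\le n$; applying $\Pi_n$ to \eqref{Ito form} gives a finite-dimensional Itô SDE whose coefficients are locally Lipschitz with at most linear growth (since $F_N\le 1$ tames the nonlinearity and the noise is linear in $u_n$), producing a global strong solution $u_n$. Itô on $\|u_n\|_{L^2}^2$ yields three cancellations: $\langle\Pi(u_n\cdot\nabla u_n),u_n\rangle=0$ kills the cutoff nonlinearity; the quadratic variation of the stochastic integral cancels against $2\langle u_n,S_\theta(u_n)\rangle$, because Stratonovich transport noise with divergence-free $\sigma_{k,i}$ is $L^2$-isometric; and $-\langle(-\Delta)^\Lambda u_n,u_n\rangle=-\|u_n\|_{\dot H^\Lambda}^2$. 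A Burkholder--Davis--Gundy step then gives $\E\sup_{t\le T}\|u_n\|_{L^2}^2+\E\int_0^T\|u_n\|_{H^\Lambda}^2\,dt\lesssim\|u_0\|_{L^2}^2$.

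\emph{Tightness and limit.} Combining the above with a fractional-in-time estimate on the drift and the stochastic integral, I would prove tightness of the laws of $\{u_n\}$ in $L^2([0,T];H^\sigma)\cap C([0,T];H^{-\eta})$ for any $\sigma<\Lambda$ and some $\eta>0$. Skorokhod's representation produces a.s.-convergent copies $(\tilde u_n,\tilde W^{k,i})\to(\tilde u,\tilde W^{k,i})$ on a new probability space. Since the standing assumption on $(\Lambda,\delta)$ forces $1-\delta<\Lambda$, one may choose $\sigma\in(1-\delta,\Lambda)$, so that $\tilde u_n\to\tilde u$ almost surely in $L^2([0,T];H^{1-\delta})$; continuity of $F_N$ lets me pass to the limit in the cutoff factor, while the remaining terms (including identification of the stochastic integral) are standard. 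This yields a weak solution in the sense of Definition~\ref{def of solutions}.

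\emph{Pathwise uniqueness and upgrade.} For two solutions $u^1,u^2$ on the same stochastic basis, set $v=u^1-u^2$ and apply Itô to $\|v\|_{L^2}^2$. The noise plus corrector again contribute zero expected energy, and the dissipation gives $-2\|v\|_{H^\Lambda}^2\,dt$. Using $\langle u^2\cdot\nabla v,v\rangle=0$, the nonlinear contribution splits as
\[
F_N(\|u^1\|_{H^{1-\delta}})\,\langle v\cdot\nabla u^1,v\rangle+\bigl[F_N(\|u^1\|_{H^{1-\delta}})-F_N(\|u^2\|_{H^{1-\delta}})\bigr]\langle u^2\cdot\nabla u^2,v\rangle.
\]
Using $F_N\le 1$, the bounded-quantity identity $F_N(a)\,a\le N$, the Lipschitz estimate $|F_N(a)-F_N(b)|\le|a-b|/N$, the interpolation $\|v\|_{H^{1-\delta}}\lesssim\|v\|_{L^2}^{1-\alpha}\|v\|_{H^\Lambda}^{\alpha}$ with $\alpha=(1-\delta)/\Lambda<1$, and Sobolev embeddings (which restrict $\delta<1/4$), I would absorb both pieces into $\varepsilon\|v\|_{H^\Lambda}^2+C(N,t)\|v\|_{L^2}^2$ with $\int_0^T C(N,t)\,dt<\infty$ almost surely thanks to the uniform $H^\Lambda$-integrability of $u^i$. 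Stochastic Grönwall then forces $v\equiv 0$, and Gyöngy--Krylov upgrades weak existence plus pathwise uniqueness to a probabilistically strong solution on the original basis.

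\emph{Main obstacle.} The third step is the technical core. Because the cutoff controls only the lower-order norm $\|u\|_{H^{1-\delta}}$, the uniqueness estimate is \emph{a priori} supercritical in 3D; closing it requires precisely the strict gap $1-\delta<\Lambda$ built into the hypotheses, which powers the interpolation above, together with the full quantitative features of $F_N$---both the pointwise identity $F_N(a)\,a\le N$ and the $1/N$-Lipschitz regularity, used in different terms. The upper bound $\delta<1/4$ reflects the Sobolev embeddings needed to estimate $\langle v\cdot\nabla u^1,v\rangle$ when $u^1$ is only in $L^2_tH^\Lambda_x$, which is what pins down the precise admissible range of $(\Lambda,\delta)$ in the statement.
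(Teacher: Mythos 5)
Your existence argument and the overall architecture (Galerkin, compactness in $L^2([0,T];H^{1-\delta})\cap C([0,T];H^{-\gamma})$, Skorokhod, then pathwise uniqueness and a Gy\"ongy--Krylov/Yamada--Watanabe upgrade) match the paper's, including the reduction to $\Lambda=1$ and the observation that $\delta>0$ is what makes the cutoff factor pass to the limit. Two harmless inaccuracies: the martingale term in $d\|u_n\|_{L^2}^2$ vanishes identically because $\langle u_n,\Pi_n(\sigma_{k,i}\cdot\nabla u_n)\rangle=0$, so the energy bound is pathwise and no Burkholder--Davis--Gundy step is needed; and at the Galerkin level the corrector only dominates (rather than exactly cancels) the quadratic variation, since $\|\Pi_n(\sigma_{k,i}\cdot\nabla u_n)\|_{L^2}\le\|\Pi(\sigma_{k,i}\cdot\nabla u_n)\|_{L^2}$.

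The genuine gap is in your uniqueness decomposition. You pair the cutoff difference with the full quadratic term $\langle u^2\cdot\nabla u^2,v\rangle$. Writing $a_i=\|u^i\|_{H^{1-\delta}}$, Lemma \ref{cut-off function estimate} gives $|F_N(a_1)-F_N(a_2)|\le\frac1N F_N(a_1)F_N(a_2)\|v\|_{H^{1-\delta}}$ and Lemma \ref{Buvw estimate} gives $|\langle v,\Pi(u^2\cdot\nabla u^2)\rangle|\lesssim a_2^2\|v\|_{H^1}^{2\delta+\frac12}\|v\|_{L^2}^{\frac12-2\delta}$. The identity $F_N(a)\,a\le N$ can be spent only once per solution, so $F_N(a_2)a_2^2\le Na_2$ leaves an un-tamed factor $a_2$; after Young's inequality the Gr\"onwall coefficient is $a_2^{4/(1-2\delta)}$, and since $u^2\in L^\infty_tL^2\cap L^2_tH^1$ only yields $a_2\in L^{2/(1-\delta)}_t$, this coefficient is never time-integrable (one needs $4(1-\delta)/(1-2\delta)\le 2$, which fails for every $\delta\ge0$). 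Trying instead to spend $F_N(a_1)$ on the extra $a_2$ via $F_N(a_1)a_2\le N+\|v\|_{H^{1-\delta}}$ produces the remainder $\|v\|_{H^{1-\delta}}^2\|v\|_{H^1}^{2\delta+\frac12}\|v\|_{L^2}^{\frac12-2\delta}\le\|v\|_{H^1}^{\frac52}\|v\|_{L^2}^{\frac12}$, which is supercritical and cannot be absorbed into the dissipation $\|v\|_{H^1}^2$. The paper (following Romito) avoids this with the three-term splitting $F_N(a_1)\Pi(v\cdot\nabla u^1)+[F_N(a_1)-F_N(a_2)]\Pi(u^2\cdot\nabla u^1)+F_N(a_2)\Pi(u^2\cdot\nabla v)$: the cutoff difference then multiplies one factor of each solution, both get tamed via $F_N(a_1)a_1\,F_N(a_2)a_2\le N^2$, and the Gr\"onwall coefficient is a deterministic constant $C(\delta,N)$. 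With that substitution the rest of your uniqueness step closes as you describe.
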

	
	Compared to the well-posedness results for the 3D deterministic GMNSE in \cite{GMNSE06}, we consider the (hyperviscous) equation with transport noise. We first prove the existence of probabilistically weak solution and then establish the pathwise uniqueness, which implies the existence of probabilistically strong solution. The proof of weak existence makes use of the classical Galerkin approximations and the compactness approach. For the proof of pathwise uniqueness, we follow the idea of Romito \cite{MR09} and show that the usual energy estimate and Gr\"onwall lemma work also for the cut-off with weaker norm. Further details can be found in Section \ref{sec-wellposedness}.
	
	To prove our second result on scaling limit, we consider a sequence of noise coefficients $\{\theta^n\}_{n\geq 1}$ satisfying $\|\theta^n\|_{\ell^2}=1$ and
	\begin{equation}\label{assumption on theta}
		\lim_{n\rightarrow\infty} \|\theta^n\|_{\ell^\infty}=0.
	\end{equation}
	Specifically, as in \cite{FL21 PTRF, LT23 NA}, we choose the following explicit form in this paper:
	\begin{equation}\label{thetaR expression}
		\theta_k^n=\sqrt{\varepsilon_n} \frac{1}{|k|^r} \textbf{1}_{\{1\leq |k|\leq n\}},\quad k\in \Z^3_0,
	\end{equation}
	where $r \in (0,\frac{3}{2})$ and $\varepsilon_n$ is a normalizing constant such that $\|\theta^n\|_{\ell^2}=1$. It can be verified that this choice satisfies the assumption \eqref{assumption on theta} (see Example \ref{example for theta} below). Then for $\Lambda$ and $\delta$ defined below \eqref{generalized GMNSE}, we consider the sequence of equations with the same initial values $u^n(0)=u_0\in L^2$:
	\begin{equation}\label{sequence of thetaR}
		\begin{split}
			du^n+F_N(\|u^n\|_{H^{1-\delta}}) \Pi\big(u^n\cdot \nabla u^n\big) \, dt=& -(-\Delta)^\Lambda u^n \, dt+S_{\theta^n}(u^n) \, dt\\
			&+\sqrt{\frac{3\nu}{2}}\sum_{k,i}  \theta_k^n \Pi(\sigma_{k,i} \cdot \nabla u^n) \, dW_t^{k,i}.
		\end{split}
	\end{equation}
	Due to the choice \eqref{thetaR expression} of coefficients, the sum over $k$ is in fact restricted to $1\le |k|\le n$. Theorem \ref{well-posedness thm} implies that the above equation admits a pathwise unique and probabilistically strong solution $\{u^n(t) \}_{t\in [0,T]}$.
	
	For $\gamma \in (0,\frac{1}{2})$, we define the space
	$$\mathcal{X}:=L^2([0,T];H^{1-\delta}) \cap C([0,T],H^{-\gamma}),$$
	equipped with the norm $\| \cdot \|_{\mathcal{X}}=\|\cdot\|_{L^2([0,T];H^{1-\delta})} \vee \|\cdot\|_{C([0,T];H^{-\gamma})}$. Here $\delta \in (0,\frac{1}{4})$ for stochastic GMNSE (i.e., \eqref{generalized GMNSE} with $\Lambda=1$) and $\delta \in [0,\frac{1}{4})$ for stochastic hyperviscous GMNSE (i.e., \eqref{generalized GMNSE} with $\Lambda \in (1,2)$).
	
	\begin{theorem}\label{scaling limit}
		For any $\Lambda \in [1,2)$ and $T>0$, the solutions $\{u^n\}_{n\geq 1}$ of equations \eqref{sequence of thetaR}  converge to the unique weak solution $\bar{u} \in L^\infty([0,T];L^2)\cap L^2([0,T];H^\Lambda)$ of the following deterministic (hyperviscous) GMNSE as $n\rightarrow \infty$:
		\begin{equation}\label{limit eq}
			\partial_t\bar{u}+F_N(\|\bar{u}\|_{H^{1-\delta}}) \Pi(\bar{u}\cdot \nabla \bar{u})=-(-\Delta)^\Lambda \bar{u}+\frac{3\nu }{5}\Delta \bar{u}, \quad \bar{u}(0)=u_0.
		\end{equation}
		More precisely, for any $p<+\infty$, the following limit holds:
		$$	\lim_{n\to \infty} \E \Big[\|u^n-\bar{u}\|_{\mathcal{X}}^p\Big]= 0.$$
	\end{theorem}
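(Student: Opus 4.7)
The plan is the classical tightness-and-identification scheme, sharpened by the fact that \eqref{limit eq} has a unique deterministic solution, which allows upgrading subsequential convergence in law to $L^p(\Omega;\mathcal{X})$ convergence. The key ingredients are: uniform moment bounds for $\{u^n\}$; compactness in $\mathcal{X}$; convergence of the Stratonovich--It\^o corrector via Theorem~\ref{S limit thm}; and vanishing of the martingale part through $\|\theta^n\|_{\ell^\infty}\to 0$. To begin, I would apply (formal) It\^o/Stratonovich calculus to $\|u^n(t)\|_{L^2}^2$: the cut-off nonlinearity vanishes by $\langle \Pi(u^n\cdot\nabla u^n),u^n\rangle=0$, the transport noise preserves the $L^2$ norm in Stratonovich form, and one gets the pathwise identity $\|u^n(t)\|_{L^2}^2+2\int_0^t\|(-\Delta)^{\Lambda/2}u^n\|_{L^2}^2\,ds=\|u_0\|_{L^2}^2$. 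This yields the deterministic uniform bound
\[
\|u^n\|_{L^\infty([0,T];L^2)}+\|u^n\|_{L^2([0,T];H^\Lambda)}\le C(\|u_0\|_{L^2},T),
\]
from which all moments are automatic.

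Next I would establish tightness of $\{\mathcal{L}(u^n)\}$ in $\mathcal{X}$. Since $\Lambda\ge 1>1-\delta$, the embedding $H^\Lambda\hookrightarrow H^{1-\delta}$ is compact, and Aubin--Lions--Simon gives tightness in $L^2([0,T];H^{1-\delta})$ once uniform fractional time regularity in some negative Sobolev space is shown. The deterministic contributions $F_N\Pi(u^n\cdot\nabla u^n)$, $(-\Delta)^\Lambda u^n$ and $S_{\theta^n}(u^n)$ are uniformly bounded in suitable $L^2([0,T];H^{-s})$ thanks to the cut-off and to $\|\theta^n\|_{\ell^2}=1$. For the stochastic integral tested against a smooth $\phi$, the Parseval-type orthogonality of $\{\sigma_{k,i}\}$ dominates the quadratic variation by
\[
\frac{3\nu}{2}\sum_{k,i}(\theta_k^n)^2\int_0^t\langle u^n,\sigma_{k,i}\cdot\nabla\phi\rangle^2\,ds\le C(\phi)\,\|\theta^n\|_{\ell^\infty}^2\int_0^t\|u^n\|_{L^2}^2\,ds,
\]
which provides uniform Kolmogorov-type continuity and hence tightness in $C([0,T];H^{-\gamma})$.

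Then I would pass to the limit via Prokhorov and Skorokhod, obtaining on a new probability space a subsequence $\tilde u^n\to\tilde u$ almost surely in $\mathcal{X}$, with convergent driving Brownian motions. Strong $L^2([0,T];H^{1-\delta})$ convergence suffices to identify the cut-off nonlinear term: continuity of $F_N$ and a.e.\ convergence of $\|\tilde u^n(t)\|_{H^{1-\delta}}$ handle the weight, while $\tilde u^n\otimes\tilde u^n\to\tilde u\otimes\tilde u$ in $L^1([0,T]\times\T^3)$ handles the quadratic factor. For the corrector I exploit the symmetry of $S_{\theta^n}$ to move it onto the test function and combine the strong convergence $S_{\theta^n}(\phi)\to\tfrac{3\nu}{5}\Delta\phi$ in $L^2$ (Theorem~\ref{S limit thm}) with the strong convergence of $\tilde u^n$ in $L^2([0,T];L^2)$, yielding
\[
\int_0^t\langle \tilde u^n,S_{\theta^n}(\phi)\rangle\,ds\longrightarrow\int_0^t\Big\langle\tilde u,\tfrac{3\nu}{5}\Delta\phi\Big\rangle\,ds.
\]
The martingale vanishes in probability by the quadratic variation bound above together with $\|\theta^n\|_{\ell^\infty}\to 0$. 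Hence $\tilde u$ is a weak solution of \eqref{limit eq}, which is well-posed in $L^\infty_tL^2\cap L^2_tH^\Lambda$ by the same energy-and-cut-off argument as in Theorem~\ref{well-posedness thm}; therefore $\tilde u=\bar u$.

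Since every subsequence of $\{u^n\}$ admits a further subsequence converging in law to the single deterministic $\bar u$, the original sequence converges to $\bar u$ in probability in $\mathcal{X}$. The uniform higher-moment bound from the first step gives uniform integrability of $\|u^n-\bar u\|_\mathcal{X}^p$, and Vitali's convergence theorem upgrades convergence in probability to $\E\|u^n-\bar u\|_\mathcal{X}^p\to 0$ for every $p<\infty$. I expect the main obstacle to be the simultaneous passage to the limit in the cut-off nonlinearity and the corrector: the cut-off $F_N(\|\cdot\|_{H^{1-\delta}})$ is only continuous, so one genuinely needs strong convergence in exactly $L^2([0,T];H^{1-\delta})$; this is precisely why $\mathcal{X}$ is designed with $H^{1-\delta}$ in its first component and why the range $\delta\in[0,1/4)$ (resp.\ $\delta\in(0,1/4)$ when $\Lambda=1$) is chosen so that this regularity sits strictly between the scale of the noise commutator and the dissipation level $\Lambda$.
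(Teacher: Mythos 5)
Your proposal is correct and follows essentially the same route as the paper: a pathwise energy bound, tightness in $\mathcal{X}=L^2_tH^{1-\delta}\cap C_tH^{-\gamma}$ via fractional time regularity and Aubin--Lions--Simon, Prokhorov/Skorokhod and identification of the limit (cut-off nonlinearity via strong $L^2_tH^{1-\delta}$ convergence, corrector via Theorem~\ref{S limit thm} tested against $\phi$, martingale killed by $\|\theta^n\|_{\ell^\infty}\to 0$), then uniqueness of the deterministic limit to upgrade convergence in law to convergence in probability, and the a.s.\ uniform bound to pass to $L^p(\Omega;\mathcal{X})$ convergence --- which is exactly the paper's final splitting of the expectation, phrased as Vitali's theorem.
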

	
	\begin{remark}\label{rmk on scaling limit}
		We consider for simplicity the sequence of stochastic equations \eqref{sequence of thetaR} and limit equation \eqref{limit eq} with the same initial values $u^n(0)=\bar{u}(0)=u_0$. In fact, the scaling limit result remains valid as long as $u^n(0) \rightharpoonup \bar{u}(0)$ in $L^2$.
	\end{remark}
	
	Theorem \ref{scaling limit} shows that the solutions $u^n$ converge strongly in $L^p(\Omega, \mathcal X)$ to the limit $u$ as $n\rightarrow \infty$. However, this is only a qualitative result without offering a quantitative convergence rate.  For the case $\Lambda=1$ and $\delta\in (0,\frac{1}{4})$, by expressing the solutions to equations \eqref{sequence of thetaR} and \eqref{limit eq} in mild form and then taking the difference between them, we can utilize standard heat kernel estimates to arrive at
	$$\E \, \|u^n-\bar{u}\|^2_{L^2([0,T];H^{1-\delta})} \lesssim 	\nu^2 n^{-2\delta} N^{-2} +\nu^{\frac{\delta}{2}+2} \delta^{\frac{\delta-5}{5}} T^{\frac{7\delta-\delta^2}{10}} N^{-2} \|\theta^n\|_{\ell^\infty}^{\frac{2\delta}{5}},$$
	which, as $n\rightarrow \infty$, can be sufficiently small under appropriate choices of the coefficients (e.g.,  $\nu \gg 1$ being fixed). The details and the corresponding proofs can be found in Section \ref{subsec-quantitative conv}. We remark that, due to the strong cut-off norm in nonlinearity, the approach in \cite{FLD quantitative} cannot be applied here to derive an estimate on $\|u^n-\bar{u}\|^2_{C([0,T],H^{-\gamma})}$ for some $\gamma>0$.
	
	Finally, we will provide an LDP result for \eqref{generalized GMNSE}. As mentioned above, we consider the case $\Lambda \in (1,2)$ and $\delta=0$.  The reason why our method cannot be applied to the stochastic GMNSE (i.e., \eqref{generalized GMNSE} with $\Lambda=1$) will be explained in Remark \ref{rmk on LDP GMNSE} below.
	
	In view of the scaling limit result in Theorem \ref{scaling limit}, for $g \in L^2([0,T];H^r)$ and $\Lambda \in (1,2)$, the corresponding skeleton equation reads as follows:
	\begin{equation}\label{skeleton eq}
		\left\{
		\begin{aligned}
			&\partial_t u+ F_N(\|u\|_{H^{1}}) \Pi (u \cdot \nabla u)=-(-\Delta)^\Lambda u+\frac{3\nu}{5}\Delta u+\Pi(g\cdot\nabla u), \\
			& \text{div} \, u=0, \quad u(0)=u_0.
		\end{aligned}
		\right.
	\end{equation}
	
	\begin{remark}\label{rmk on LDP GMNSE}
		To apply the weak convergence method for proving LDP results, we first need to establish the well-posedness of the skeleton equation. However, for stochastic GMNSE \eqref{generalized GMNSE} with $\Lambda=1$ and $\delta\in (0, \frac14)$, we are unable to prove the uniqueness of the corresponding skeleton equation. Specifically, suppose $u_1$ and $u_2$ are both solutions to equation \eqref{skeleton eq} with $\Lambda = 1$, and let $\xi:= u_1 - u_2$. Before applying the energy equality, it is necessary to verify that the following condition from \cite[Theorem 2.12]{RL 1990} holds:
		$$\int_{0}^{T} \|\xi (t)\|_{H^1}^2+\|\partial_t \xi\|_{H^{-1}}^2 \, dt <+\infty.$$
		But for $r\in (0,\frac{3}{2})$, the second term on the left-hand side is not finite for $g\in L^2([0,T];H^r)$ and $\xi \in L^2([0,T];H^1)$ due to the following estimate:
		$$\int_{0}^{T} \big\|\Pi(g\cdot \nabla \xi) \big\|_{H^{-1}}^2 \, dt \lesssim \int_{0}^{T} \|g\otimes\xi \|_{L^2}^2\, dt \lesssim \int_{0}^{T} \|g\|_{H^r}^2 \|\xi\|_{H^{\frac{3}{2}-r}}^2 \, dt.$$
		So we cannot apply the energy equality to establish the uniqueness of the skeleton equation.
	\end{remark}
	
	Based on the above arguments, we will only discuss the hyperviscous stochastic GMNSE in our final result concerning LDP. To state the theorem, we introduce the following two spaces:
	\begin{equation}\label{ETL space}
		\mathcal{E}_0^L=\big\{ u_0\in L^2: \|u_0\|_{L^2} \leq L\big\}, \quad \mathcal{E}^{T,L}=\Big\{ u\in C_w([0,T];L^2):\sup_{t\in [0,T]} \|u(t) \|_{L^2}\leq L\Big\},
	\end{equation}
	where $w$ stands for weak continuity in time. These two spaces are equipped with the topologies of  $H^{-\lambda}$ and $C([0,T];H^{-\lambda})$ for some $\lambda>0$, respectively.
	Recall that the noise coefficients $\{\theta^n\}_n$ are defined  in \eqref{thetaR expression}, and $\varepsilon_n$ is the parameter mentioned therein. Define space-time noise
	$$W^r(t,x)=\sqrt{\frac{3\nu}{2}}\sum_{k,i} |k|^{-r} \sigma_{k,i} (x) W^{k,i}_t.$$
	Then the equation \eqref{sequence of thetaR} can be written more compactly as
	\begin{equation}\label{eq-LDP}
		\begin{split}
			du^n+F_N(\|u^n\|_{H^{1-\delta}}) \Pi\big(u^n\cdot \nabla u^n\big) \, dt=& -(-\Delta)^\Lambda u^n \, dt+S_{\theta^n}(u^n) \, dt + \sqrt{\varepsilon_n}\, d\Pi(W^r_n \cdot \nabla u^n),
		\end{split}
	\end{equation}
	where $W^r_n= \Pi_n W^r$ and $\Pi_n$ is the projection to $H_n:=\mbox{span}\{\sigma_{k,i}: 1\le |k|\le n, i=1,2\}$.
	
	Denote $\mathcal{G}^0$ as the solution map associated to skeleton equation \eqref{skeleton eq} with $u_0 \in L^2$ and $g\in L^2([0,T];H^r)$; and $\mathcal{G}^n$ stands for the solution map associated to equation  \eqref{eq-LDP} with parameters $\Lambda \in (1,2)$, $\delta=0$ and initial value $u_0^n \in L^2$. All these solution maps are measurable and we will verify in Section \ref{subsec-prf of LDP} that they make sense. Finally, we denote $\text{Int}(g)(\cdot)=\int_{0}^{\cdot} g(s) \, ds$.
	Now we can state the result on LDP.
	\begin{theorem}\label{LDP thm}
		For $\Lambda \in (1,2)$ and $\delta=0$, the solutions $\{u^n=\mathcal{G}^n(u_0^n, \sqrt{\varepsilon_n}\, W^r)\}_{n\geq 1}$ of equation \eqref{eq-LDP} with initial value $\{u_0^n\}_{n\geq 1} \subset \mathcal{E}_0^L$ satisfy the uniform large deviation principle with the rate function
		$$	I_{u_0} (u)=\inf_{\{g\in L^2([0,T];H^r):\, u=\mathcal{G}^0(u_0, \text{Int}(g))\}} \bigg\{ \frac{1}{2} \int_{0}^{T} \|g(s) \|_{H^r}^2 \, ds \bigg\},$$
		where $u \in \mathcal{E}^{T,L}$ and $r \in (0,\frac{3}{2})$.
	\end{theorem}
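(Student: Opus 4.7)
The plan is to apply the uniform weak convergence method of Budhiraja--Dupuis--Maroulas \cite{BDM08}, which reduces the proof of the uniform LDP to verifying (i) well-posedness and continuity of the skeleton map $\mathcal{G}^0$, and (ii) convergence in distribution of the controlled stochastic equations to the associated skeleton solution.

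First I would establish well-posedness of \eqref{skeleton eq} for every $u_0 \in \mathcal{E}_0^L$ and $g \in L^2([0,T];H^r)$. Existence via Galerkin approximation yields a weak solution in $L^\infty([0,T];L^2)\cap L^2([0,T];H^\Lambda)$: testing against $u$, both nonlinear terms $\<F_N(\|u\|_{H^1})\Pi(u\cdot\nabla u),u\>$ and $\<\Pi(g\cdot\nabla u),u\>$ vanish by divergence-freeness of $u$ and $g$, so the energy estimate is driven purely by the $\Lambda$-dissipation and the extra $\frac{3\nu}{5}\Delta u$ term, with the cut-off $F_N\leq 1$ keeping the nonlinearity tame. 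For uniqueness, following \cite{MR09}, I would apply the energy equality of \cite[Theorem 2.12]{RL 1990} to the difference $\xi=u_1-u_2$; thanks to $\Lambda>1$, the obstruction of Remark \ref{rmk on LDP GMNSE} disappears because $\|\Pi(g\cdot\nabla\xi)\|_{H^{-\Lambda}}$ is square-integrable in time for $g\in L^2([0,T];H^r)$ and $\xi\in L^2([0,T];H^\Lambda)$, validating the required regularity $\partial_t\xi\in L^2([0,T];H^{-\Lambda})$.

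Second, for the continuity of the skeleton map I would show that if $g_n\rightharpoonup g$ weakly in the level set $\{g:\tfrac12\int_0^T\|g\|_{H^r}^2\,ds\leq M\}$ and $u_0^n\to u_0$ in $\mathcal{E}_0^L$, then $\mathcal{G}^0(u_0^n,\text{Int}(g_n))\to \mathcal{G}^0(u_0,\text{Int}(g))$ in $\mathcal{E}^{T,L}$. The uniform energy bound combined with an Aubin--Lions argument based on $\partial_t u^n\in L^2([0,T];H^{-\Lambda})$ yields strong convergence of a subsequence in $L^2([0,T];H^{\Lambda-\epsilon})$ for small $\epsilon>0$, which together with the weak convergence of $g_n$ allows passage to the limit in $F_N(\|u^n\|_{H^1})\Pi(u^n\cdot\nabla u^n)$ and in $\Pi(g_n\cdot\nabla u^n)$ via a weak-times-strong product argument; by uniqueness of the skeleton solution the whole sequence converges.

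Third, in the weak convergence formulation it remains to show that the solution $v^n$ of the controlled equation
$$dv^n+F_N(\|v^n\|_{H^1})\Pi(v^n\cdot\nabla v^n)\,dt=-(-\Delta)^\Lambda v^n\,dt+S_{\theta^n}(v^n)\,dt+\Pi(g_n\cdot\nabla v^n)\,dt+\sqrt{\varepsilon_n}\,d\Pi(W^r_n\cdot\nabla v^n),$$
driven by controls $g_n$ converging in distribution in the weak topology of a bounded level set, converges in distribution to $\mathcal{G}^0(u_0,\text{Int}(g))$ in $\mathcal{E}^{T,L}$. I would derive uniform-in-$n$ energy bounds (exploiting the symmetry and nonpositivity of $S_{\theta^n}$ on divergence-free fields, exactly as in the proof of Theorem \ref{scaling limit}), deduce tightness of $\{v^n\}$ in $\mathcal{E}^{T,L}$, invoke a Skorokhod representation, and identify the limit: the quantitative convergence $S_{\theta^n}(\phi)\to \frac{3\nu}{5}\Delta\phi$ from Theorem \ref{S limit thm}, paired with the strong $L^2([0,T];H^{\Lambda-\epsilon})$ compactness of $\{v^n\}$, produces the extra dissipation, while the martingale term vanishes in $L^2$ by a Burkholder--Davis--Gundy bound since $\varepsilon_n\to 0$. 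The main obstacle will be the joint passage to the limit in $S_{\theta^n}(v^n)$ and in $\Pi(g_n\cdot\nabla v^n)$: both involve pairings that a priori are only weak$\times$weak, but the extra regularity provided by $-(-\Delta)^\Lambda$ with $\Lambda>1$ promotes them to weak$\times$strong pairings — precisely the mechanism that fails in the setting of Remark \ref{rmk on LDP GMNSE}. Uniformity in $u_0\in\mathcal{E}_0^L$ follows since all the estimates above depend on the initial datum only through $\|u_0\|_{L^2}\leq L$.
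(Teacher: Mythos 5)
Your proposal follows the same route as the paper: the Budhiraja--Dupuis--Maroulas weak convergence method, verified through (i) well-posedness and continuity of the skeleton map (the paper's Lemmas \ref{well posedness for skeleton eq}, \ref{LDP convergence lemma} and \ref{hypothesis 1}) and (ii) convergence in law of the controlled equations (Lemma \ref{hypothesis 2}), with the corrector handled by Theorem \ref{S limit thm} and the martingale term killed by $\varepsilon_n\to 0$ via a Burkholder--Davis--Gundy bound. One step is stated too optimistically, and as written it would fail: you claim $\|\Pi(g\cdot\nabla\xi)\|_{H^{-\Lambda}}$ is square-integrable in time merely because $g\in L^2([0,T];H^r)$ and $\xi\in L^2([0,T];H^\Lambda)$. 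The product estimate gives $\|\Pi(g\cdot\nabla\xi)\|_{H^{-\Lambda}}\lesssim \|g\|_{H^r}\|\xi\|_{H^{\frac{5}{2}-\Lambda-r}}$, and the product of two functions that are each only $L^2$ in time need not be integrable after squaring; to conclude one must bound $\|\xi\|_{H^{\frac{5}{2}-\Lambda-r}}$ by $\|\xi\|_{L^\infty([0,T];L^2)}$, which forces $\frac{5}{2}-\Lambda-r\le 0$, i.e.\ $\Lambda+r>\frac{5}{2}$. This is precisely the hypothesis the paper adds in Lemma \ref{well posedness for skeleton eq} and reuses in Lemmas \ref{LDP convergence lemma} and \ref{hypothesis 2}; for $\Lambda$ close to $1$ and $r$ close to $0$ the uniqueness argument for the skeleton equation does not close, so you should record this restriction on $(\Lambda,r)$ explicitly. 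Apart from that, your compactness step (Aubin--Lions from $\partial_t u^n\in L^2([0,T];H^{-\Lambda})$ giving strong convergence in $L^2([0,T];H^{\Lambda-\epsilon})$) is a harmless variant of the paper's use of uniform $C^{1/2}([0,T];H^{-\Lambda})$ bounds and convergence in $C([0,T];H^{-\lambda})\cap L^2([0,T];L^2)$.
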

	We will prove this theorem by following \cite[Section 2]{GL24 LDP}, where the authors study LDP for stochastic transport equations and 2D Euler equations, both driven by transport noise. Compared to their work, we consider the nonlinear equation in a higher-dimensional setting. Moreover, for the equations considered in \cite{GL24 LDP}, the pathwise uniqueness of  solutions is unknown for $L^2$-initial value, hence they first address equations with $L^\infty$-initial value and prove the LDP result by using weak convergence approach, and then extend the conclusion to $L^2$-initial data via $\Gamma$-convergence method. Here, based on the well-posedness result for the hyperviscous stochastic GMNSE with $L^2$-initial value, we can directly apply the weak convergence method to obtain an LDP result.

	\subsection{Organization of our paper}\label{subsec-organization}
	In Section \ref{subsec-sturcture of noise}, we introduce the structure of the noise along with an example, and present a key theorem that will be used in the subsequent proofs; Section \ref{subsec-analytical results} outlines several classical analytical results.  Section \ref{sec-wellposedness} focuses on establishing the well-posedness of equation \eqref{generalized GMNSE},  with the existence and pathwise uniqueness discussed in Sections \ref{subsec-exist} and \ref{subsec-unique}, respectively. In Section \ref{sec-convergence}, we first prove the scaling limit for stochastic GMNSE in Section \ref{subsec-scaling limit}, followed by the estimate of quantitative convergence rate in Section \ref{subsec-quantitative conv}. Finally, we will introduce the weak convergence method in Section \ref{subsec-weak convergence method}, and the proof of LDP for the hyperviscous stochastic GMNSE will be given in Section \ref{subsec-prf of LDP}.

	\section{Preliminaries}\label{sec-preliminaries}
	\subsection{Structure of the noise}\label{subsec-sturcture of noise}
	Recall that $\Z_0^3=\Z^3 \setminus \{0\}$ is the nonzero lattice points and $\theta \in \ell^2(\Z_0^3)$. In the sequel, we always assume that $\theta$ is radially symmetric with only finitely many nonzero components, and $\|\theta\|_{\ell^2}=1$.
	
	Suppose that $\Z_0^3=\Z_+^3 \cup \Z_-^3$ is a disjoint partition of $\Z_0^3$ satisfying $\Z_+^3 =-\Z_-^3$. Denote $e_k(x)=e^{2\pi i k\cdot x}, \, x\in \T^3, \, k\in \Z_0^3$. For any $k \in \Z_+^3$, let $\{a_{k,1},a_{k,2}\}$ be an orthonormal basis of $k^\perp:=\{x\in \R^3: k \cdot x=0\}$. For $k \in \Z_-^3$, we define $a_{k,i}=a_{-k,i}$ for $i=1,2$. Then the divergence-free vector fields take the form of
	$$\sigma_{k,i}=a_{k,i} e_k(x), \quad x\in \T^3, \ k\in \Z_0^3, \ i=1,2,$$
	and the family $\{\sigma_{k,i}: k \in \Z_0^3,i=1,2\}$ is  the complete orthonormal system (CONS) of the space $L^2(\T^3,\mathbb{C}^3)$ with zero mean.
	
	For the complex valued Brownian motions, they can be defined as
	$$W^{k,i}= \left \{
	\aligned
	&B^{k,i}+ \sqrt{-1} B^{-k,i}, \quad k \in \Z_+^3,\\
	&B^{-k,i}-\sqrt{-1} B^{k,i}, \quad k \in \Z_-^3,
	\endaligned
	\right.$$
	where $\{B^{k,i}: k\in \Z_0^3, i=1,2\}$ is a family of independent standard real Brownian motions. Besides, the following identity holds:
	$$\big[W^{k,i},W^{l,j}\big]_t=2t \delta_{k,-l} \delta_{i,j}, \quad k,l \in \Z_0^3, \ i,j \in \{1,2\}.$$
	
	In the next example, we show how to estimate the $\ell^\infty$-norm of $\theta^n$ defined in \eqref{thetaR expression}.
	
	\begin{example}\label{example for theta}
		Let $\theta_k^n$ be given in \eqref{thetaR expression}, then by the assumption that $\|\theta^n\|_{\ell^2}=1$, we can explicitly calculate
		$$\varepsilon_n=\bigg(\sum_{1\leq |k| \leq n} \frac{1}{|k|^{2r}}\bigg)^{-1} \sim \bigg(\int_{1}^{n} \rho^{2-2r} \, d\rho \bigg)^{-1} \sim n^{2r-3},$$
		which implies that $\|\theta^n\|_{\ell^\infty}=\sqrt{\varepsilon_n} \rightarrow 0$ as $n \rightarrow \infty$.
	\end{example}
	
	Recall the definition of the It\^o-Stratonovich corrector $S_\theta$ in \eqref{Stheta def}. The following result and its proof can be found in \cite[Theorem 3.5]{LT23 NA}.
	\begin{theorem}\label{S limit thm}
		For any $n \geq 1$, let $\theta^n$ be defined as in \eqref{thetaR expression}. Then for any $\alpha \in [0,1]$ and $b \in \R$, the following estimate holds for any divergence free field $\phi \in H^b(\T^3,\R^3)$:
		$$ \Big\| S_{\theta^n}(\phi)-\frac{3}{5}\nu \Delta \phi \Big\|_{H^{b-2-\alpha}} \lesssim \nu n^{-\alpha} \|\phi\|_{H^b}.$$
	\end{theorem}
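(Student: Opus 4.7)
The plan is to work in Fourier space, turning the assertion into a pointwise estimate on the symbol of the operator $S_{\theta^n}-\tfrac{3\nu}{5}\Delta$, and then extract the quantitative rate from the rotational symmetry of $(\theta^n_k)^2$ together with a high-frequency expansion of the Leray projection $P_{q-k}$.

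First, I would expand $\phi=\sum_{q}\hat\phi(q)e_q$ with $\hat\phi(q)\in q^\perp$, and compute from the definition \eqref{Stheta def} that
\[
\widehat{S_{\theta^n}(\phi)}(q) = -4\pi^{2}\,\frac{3\nu}{2}\sum_{k,i}(\theta^n_k)^{2}\,(a_{k,i}\cdot q)^{2}\, P_{q}P_{q-k}\,\hat\phi(q),
\]
where I used $\sigma_{k,i}\!\cdot\!\nabla e_\ell = 2\pi i (a_{k,i}\cdot\ell)\,e_{\ell+k}$ and $a_{k,i}\!\cdot\! k = 0$. Since $\widehat{\Delta\phi}(q)=-4\pi^{2}|q|^{2}\hat\phi(q)$, Plancherel reduces the theorem to the symbol estimate
\[
\Bigl\|\,\tfrac{3\nu}{5}|q|^{2}\,I_{q^{\perp}} \;-\; \tfrac{3\nu}{2}\sum_{k,i}(\theta^n_k)^{2}\,(a_{k,i}\!\cdot\! q)^{2}\,P_{q}P_{q-k}\,\Bigr\|_{\mathcal L(q^\perp)} \;\lesssim\; \nu\, n^{-\alpha}|q|^{2+\alpha},
\]
uniformly in $q\in\Z^3_0$ and $\alpha\in[0,1]$; indeed, multiplying by the Sobolev weight $(1+|q|^2)^{b-2-\alpha}$ and summing in $q$ produces exactly $\nu^{2}n^{-2\alpha}\|\phi\|_{H^b}^{2}$.

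Next I would identify the limit by an isotropy calculation. Replacing provisionally $P_{q-k}$ by $P_{k}$ and using $\sum_{i}a_{k,i}\otimes a_{k,i}=I-\hat k\hat k^{T}$ together with the radial symmetry of $(\theta^n_k)^{2}$, the directional sum reduces to the spherical integrals
\[
\int_{S^{2}}(\hat k\!\cdot\! q)^{2}\,d\hat k = \tfrac{|q|^{2}}{3},\qquad \int_{S^{2}}(\hat k\!\cdot\! q)^{2}(\hat k\!\cdot\! v)^{2}\,d\hat k = \tfrac{|q|^{2}|v|^{2}+2(q\!\cdot\! v)^{2}}{15}.
\]
For $v\in q^{\perp}$ this yields $\langle v,\sum_{k,i}(\theta^n_k)^{2}(a_{k,i}\!\cdot\! q)^{2}P_{q}P_{k}\,v\rangle\to\tfrac{2}{5}|q|^{2}|v|^{2}$, and multiplication by $3\nu/2$ produces precisely $\tfrac{3\nu}{5}|q|^{2}I_{q^\perp}$; this is how the constant $3/5$ enters.

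To convert this limit into the quantitative $n^{-\alpha}$ rate I would split the error into three pieces and bound them separately. Piece (i): swap $P_{q-k}$ for $P_{k}$ and control $\|P_{q-k}-P_{k}\|\lesssim |q|/|k|$ when $|k|\ge 2|q|$, using the trivial bound $\le 2$ otherwise. Piece (ii): the difference between the discrete sum $\sum_{k}(\theta^n_k)^{2}F(\hat k,|k|)$ with $(\theta^n_k)^{2}=\varepsilon_{n}|k|^{-2r}\mathbf 1_{1\le|k|\le n}$, $\varepsilon_{n}\sim n^{2r-3}$, and the corresponding spherical average; this is a Riemann-sum/equidistribution error of order $|q|/|k|$ thanks to the fact that only degree-$4$ polynomials in $\hat k$ appear and that the mass of $(\theta^n_k)^2$ concentrates on $|k|\sim n$ (since $r<3/2$). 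Piece (iii): in the regime $|q|\gtrsim n$, neither quantity is small, so I would discard the averaging argument entirely and use the brute bound $\|\text{symbol}\|\lesssim \nu|q|^{2}$, which is $\le \nu n^{-\alpha}|q|^{2+\alpha}$ because $|q|/n\ge 1$. Interpolating the bounds from (i)--(ii) (which give a factor $|q|/n$) against the trivial bound (factor $1$) produces the required factor $(|q|/n)^{\alpha}$ for every $\alpha\in[0,1]$.

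The main obstacle is piece (ii): making the equidistribution error for lattice spherical sums quantitative and uniform in $q$. I expect this to come out cleanly because the only directional functions that survive after summing over $i$ are the quadratic form $(\hat k\cdot q)^{2}$ and its product with $(\hat k\cdot v)^{2}$, so one only needs Riemann-sum estimates for low-degree spherical harmonics rather than a general equidistribution theorem; combined with the explicit profile of $(\theta^n_k)^2$ and the concentration on $|k|\sim n$, this should yield the desired $n^{-1}$ rate on shells $|k|\gtrsim 1$, after which interpolation with the trivial bound gives the full family of $n^{-\alpha}$ estimates stated in the theorem.
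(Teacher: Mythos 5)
First, a point of reference: the paper does not prove this statement at all --- it is quoted verbatim from \cite[Theorem 3.5]{LT23 NA} --- so there is no internal proof to compare against. Judged on its own terms, your framework is the right one and matches the standard route in that literature: the Fourier computation of the symbol of $S_{\theta^n}$, the reduction to a pointwise estimate on $q^\perp$, the isotropy integrals producing $1-\tfrac13-\tfrac13+\tfrac1{15}=\tfrac25$ and hence the constant $\tfrac{3\nu}{5}$, and the interpolation between the trivial $\alpha=0$ bound and an $\alpha=1$ bound are all correct.

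The genuine gap is piece (ii), which is the entire analytic content of the theorem and which you only assert ``should come out cleanly.'' Two of the heuristics you lean on do not hold up. First, the mass of $(\theta^n_k)^2$ does \emph{not} concentrate on $|k|\sim n$: since $\sum_{|k|\le M}(\theta^n_k)^2\sim (M/n)^{3-2r}$, a fixed positive fraction of the mass sits at any fixed ratio $M/n$, and the shells $|k|\lesssim |q|$ (where neither your swap $P_{q-k}\mapsto P_k$ nor any averaging is available) already contribute an error of order $|q|^2(|q|/n)^{3-2r}$, which exceeds $|q|^3/n$ whenever $r>1$. Second, after summing over $i$, octahedral symmetry of the lattice makes the degree-$2$ moments \emph{exactly} isotropic, but not the degree-$4$ ones: the surviving obstruction is the scalar $b_n=\tfrac56\sum_k(\theta^n_k)^2\bigl(\hat k_1^4+\hat k_2^4+\hat k_3^4-\tfrac35\bigr)$, which multiplies a genuinely anisotropic operator on $q^\perp$ and must be shown to be $O(n^{-1})$. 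The natural Abel-summation/Riemann-sum bound (using that the directional discrepancy over a ball of radius $R$ is $O(R^2)$) gives only $b_n=O\bigl(n^{-\min(1,\,3-2r)}\bigr)$, with a logarithm at $r=1$; for $r\in(1,\tfrac32)$ the partial sums $\sum_{|k|\le n}|k|^{-2r}\bigl(\sum_i\hat k_i^4-\tfrac35\bigr)$ converge to a constant with no evident reason to vanish, so your argument delivers the rate $n^{-(3-2r)}$ rather than $n^{-1}$ in that range. You therefore need either to restrict $r$, to track the $r$-dependence of the exponent, or to supply the additional cancellation that \cite{LT23 NA} must exploit; as written, the proposal does not establish the stated estimate for the full range $r\in(0,\tfrac32)$ and $\alpha\in[0,1]$.
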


	\subsection{Analytical results}\label{subsec-analytical results}
	In this section, we collect some fundamental results that will be used beow. To begin with, we present several classical inequality, whose proofs can be found in standard harmonic analysis textbooks; hence we omit them here.
	\begin{lemma}\label{HHH}
		For any $s_1,s_2 <\frac{d}{2}$, if $s_1+s_2>0$, then for any $f \in H^{s_1}(\T^d)$ and $h \in H^{s_2}(\T^d)$, we have $fh \in H^{s_1+s_2-\frac{d}{2}}(\T^d)$, and the following inequality holds:
		$$	\|fh\|_{H^{s_1+s_2-\frac{d}{2}}(\T^d)} \lesssim \|f\|_{H^{s_1}(\T^d)} \|h\|_{H^{s_2}(\T^d)}. $$
	\end{lemma}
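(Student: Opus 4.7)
The plan is to combine the Littlewood–Paley dyadic decomposition with Bony's paraproduct formula. Writing $f = \sum_j \Delta_j f$ and $h = \sum_j \Delta_j h$ with Littlewood–Paley blocks $\Delta_j$ (frequencies $\sim 2^j$) and low-frequency cut-offs $S_j = \sum_{k<j} \Delta_k$, I would split
\begin{equation*}
    fh = T_f h + T_h f + R(f,h),
\end{equation*}
where $T_f h := \sum_j S_{j-2} f \cdot \Delta_j h$, $T_h f$ is defined symmetrically, and $R(f,h) := \sum_{|j-k|\leq 2} \Delta_j f \cdot \Delta_k h$ is the high-high remainder. Setting $s := s_1 + s_2 - d/2$, the goal is to bound each of the three pieces in $H^s$ by $\|f\|_{H^{s_1}}\|h\|_{H^{s_2}}$.

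For the low-high paraproduct, each summand $S_{j-2}f \cdot \Delta_j h$ is Fourier-supported in an annulus of radius $\sim 2^j$, so almost-orthogonality yields $\|T_f h\|_{H^s}^2 \sim \sum_j 2^{2js}\|S_{j-2} f\|_{L^\infty}^2\|\Delta_j h\|_{L^2}^2$. The hypothesis $s_1 < d/2$ is used to establish, via Bernstein's inequality and Cauchy–Schwarz on the geometric series in $k \leq j-2$, the bound $\|S_{j-2} f\|_{L^\infty} \lesssim 2^{j(d/2 - s_1)}\|f\|_{H^{s_1}}$. Since $s + (d/2 - s_1) = s_2$, the expression collapses to $\|f\|_{H^{s_1}}^2\|h\|_{H^{s_2}}^2$. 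The estimate on $T_h f$ is identical by symmetry, using $s_2 < d/2$.

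The main obstacle, and the only place where $s_1 + s_2 > 0$ enters, is the high-high remainder $R(f,h)$: its summands $\Delta_j f \cdot \tilde\Delta_j h$ (with $\tilde\Delta_j := \sum_{|k-j|\leq 2}\Delta_k$) are Fourier-supported in a \emph{ball} of radius $\lesssim 2^j$ rather than an annulus, so the roles of low and high frequencies are reversed and one cannot rely on almost-orthogonality directly. The trick is to invoke the low-frequency Bernstein inequality $\|\Delta_\ell g\|_{L^2} \lesssim 2^{\ell d/2}\|g\|_{L^1}$ together with the Hölder bound $\|\Delta_j f \cdot \tilde\Delta_j h\|_{L^1} \leq \|\Delta_j f\|_{L^2}\|\tilde\Delta_j h\|_{L^2}$. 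Setting $\alpha_j := 2^{j s_1}\|\Delta_j f\|_{L^2}$ and $\beta_j := 2^{j s_2}\|\tilde\Delta_j h\|_{L^2}$ (both in $\ell^2$ with norms controlled by $\|f\|_{H^{s_1}}$ and $\|h\|_{H^{s_2}}$), after using $s + d/2 = s_1 + s_2$, one arrives at
\begin{equation*}
    \|R(f,h)\|_{H^s}^2 \lesssim \sum_\ell \Big(\sum_{j \geq \ell - c} 2^{(\ell - j)(s_1 + s_2)}\,\alpha_j \beta_j\Big)^{\!2}.
\end{equation*}
Because $s_1 + s_2 > 0$, the geometric weights $2^{(\ell-j)(s_1+s_2)}$ are summable on $\{j \geq \ell - c\}$, so Cauchy–Schwarz on the inner sum followed by an interchange of the $\ell$- and $j$-sums reduces the right-hand side to a constant times $\sum_j \alpha_j^2 \beta_j^2 \leq \|\alpha\|_{\ell^2}^2\|\beta\|_{\ell^2}^2$, closing the bound. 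Combining the three paraproduct estimates completes the proof.
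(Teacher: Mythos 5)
The paper omits the proof of this lemma, deferring to standard harmonic analysis references (it cites Bahouri--Chemin--Danchin elsewhere), and your argument is exactly the standard one found there: Bony's paraproduct decomposition, almost-orthogonality plus Bernstein for the two paraproducts (using $s_1<\frac d2$ and $s_2<\frac d2$), and the low-frequency Bernstein inequality with summability of the geometric weights (using $s_1+s_2>0$) for the high-high remainder. The argument is correct, all three hypotheses are used in the right places, and the $\ell^2$-bookkeeping at the end closes properly, so there is nothing to add.
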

	
	\begin{lemma}[Interpolation inequality] \label{interpolation}
		For any $s_1<s<s_2$, there exists $\tau \in (0,1)$ satisfying $s=\tau s_1+(1-\tau)s_2$, such that
		$$	\|f\|_{H^{s}} \leq \|f\|^{\tau}_{H^{s_1}} \|f\|^{1-\tau}_{H^{s_2}}.	$$
	\end{lemma}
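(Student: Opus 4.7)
The plan is to reduce this to a single application of H\"older's inequality on the Fourier side. First I would use Plancherel to rewrite the Sobolev norms as weighted $\ell^2$ sums of Fourier coefficients: for any $\sigma \in \R$ and any mean-zero $f$ on $\T^3$,
$$\|f\|_{H^\sigma}^2 = \sum_{k \in \Z^3_0} |k|^{2\sigma} |\hat f(k)|^2,$$
using the homogeneous weight $|k|^\sigma$, which is well-defined on $\Z^3_0$ and equivalent to the inhomogeneous weight $(1+|k|^2)^{\sigma/2}$ on mean-zero functions (the working setting of this paper, as stated in Section \ref{subsec-main results}). The same computation goes through componentwise for vector-valued divergence-free fields.

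Next I would exploit the defining relation $s = \tau s_1 + (1-\tau) s_2$ to factor the summand pointwise in $k$:
$$|k|^{2s} |\hat f(k)|^2 = \bigl( |k|^{2s_1} |\hat f(k)|^2 \bigr)^{\tau} \bigl( |k|^{2s_2} |\hat f(k)|^2 \bigr)^{1-\tau}.$$
Summing over $k \in \Z^3_0$ and applying H\"older's inequality with conjugate exponents $1/\tau$ and $1/(1-\tau)$ gives
$$\|f\|_{H^s}^2 \le \Bigl( \sum_{k} |k|^{2s_1} |\hat f(k)|^2 \Bigr)^{\tau} \Bigl( \sum_{k} |k|^{2s_2} |\hat f(k)|^2 \Bigr)^{1-\tau} = \|f\|_{H^{s_1}}^{2\tau}\, \|f\|_{H^{s_2}}^{2(1-\tau)},$$
and taking square roots yields the stated inequality. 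The required $\tau \in (0,1)$ is produced explicitly by $\tau = (s_2 - s)/(s_2 - s_1)$, which lies strictly between $0$ and $1$ because $s_1 < s < s_2$.

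There is essentially no obstacle here: once Plancherel is invoked, the whole argument collapses to a single H\"older estimate. The only points worth a sentence of care are (i) the equivalence between homogeneous and inhomogeneous weights on mean-zero functions, so that the lemma as stated (with $\|\cdot\|_{H^s}$ in the paper's inhomogeneous sense) follows from the homogeneous Fourier computation, and (ii) the trivial verification that $\tau$ defined above lies in $(0,1)$. If one prefers to bypass Fourier series entirely, the same inequality is also an instance of complex interpolation between $H^{s_1}$ and $H^{s_2}$, but on $\T^3$ the direct Fourier proof is the cleanest and most self-contained.
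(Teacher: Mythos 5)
Your proof is correct and is the standard textbook argument; the paper itself gives no proof of Lemma \ref{interpolation} (it is explicitly omitted as classical), so there is nothing to compare against. One small streamlining: rather than proving the inequality for the homogeneous weight $|k|^{2\sigma}$ and then invoking equivalence with the inhomogeneous weight (which would introduce a multiplicative constant and spoil the stated constant $1$), run the identical H\"older argument directly with whichever weight defines $\|\cdot\|_{H^\sigma}$; the pointwise factorization $w_k^{s}=\big(w_k^{s_1}\big)^{\tau}\big(w_k^{s_2}\big)^{1-\tau}$ under $s=\tau s_1+(1-\tau)s_2$ holds for any positive weight $w_k$, in particular for $w_k=1+|k|^2$.
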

	
	Next, we provide several compact embedding results from \cite[Corollary 5]{JSimon} without proofs. Recall that
	fractional Sobolev spaces are defined as
	$$W^{s,p} ([0,T];Y)=\Big\{ f\in L^p([0,T];Y): \int_{0}^{T}  \int_{0}^{T} \frac{\|f_t-f_s\|_Y^p}{|t-s|^{1+sp}} \, dtds<\infty \Big\}, $$
	where $Y$ is a Banach space and $s\in (0,1)$, $p \in [1,+\infty)$, endowed with the norm
	$$\|f\|_{W^{s,p} ([0,T];Y)}^p=\|f\|_{L^p([0,T];Y)}^p+\int_{0}^{T} \int_{0}^{T} \frac{\|f_t-f_s\|_Y^p}{|t-s|^{1+sp}} \, dtds.$$
	Then we have
	
	\begin{theorem}\label{compact embedding}
		(i) Suppose $\alpha \in (0,1/2)$, $\beta>5/2$ and $\delta\in (0,1/4)$, then the following embedding is compact:
		$$L^2([0,T];H^1) \cap W^{\alpha,2} ([0,T];H^{-\beta}) \subset L^2([0,T];H^{1-\delta}).$$
		
		(i') Suppose that $\alpha, \beta$ are the same as in (i), then for $\Lambda \in (1,2)$, the following embedding is compact:
		$$L^2([0,T];H^\Lambda) \cap W^{\alpha,2} ([0,T];H^{-\beta}) \subset L^2([0,T];H^{1}).$$
		
		(ii) Suppose $\gamma \in (0,1/2)$, $\beta>5/2$ and $p>12(\beta-\gamma)/\gamma$,  then the following embedding is compact:
		$$L^p([0,T];L^2) \cap W^{\frac{1}{3},4}([0,T];H^{-\beta}) \subset C([0,T];H^{-\gamma}).$$
	\end{theorem}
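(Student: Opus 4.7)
The plan is to derive all three embeddings from the Aubin--Lions--Simon compactness criterion \cite[Corollary~5]{JSimon}, whose abstract form asserts that, given continuously embedded Banach spaces $X \hookrightarrow B \hookrightarrow Y$ with $X \hookrightarrow B$ compact, any bounded family in $L^p([0,T];X) \cap W^{s,q}([0,T];Y)$ is relatively compact in $L^p([0,T];B)$; under stronger time regularity the target can be upgraded to $C([0,T];B)$. The only real work in each part is to pick the triple $(X,B,Y)$ and to check Simon's exponent condition.

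For (i) I would set $(X,B,Y) = (H^1, H^{1-\delta}, H^{-\beta})$. The compactness of $H^1 \hookrightarrow H^{1-\delta}$ is immediate from Rellich--Kondrachov on $\T^3$ because $\delta > 0$, and $\beta > 5/2$ makes the second embedding continuous. With the given time exponents $(p,s,q) = (2,\alpha,2)$ and $\alpha \in (0, 1/2)$, Simon's criterion applies directly. Part (i$'$) is identical with $(X,B,Y) = (H^\Lambda, H^1, H^{-\beta})$, using $\Lambda > 1$ to get the compact embedding $H^\Lambda \hookrightarrow H^1$.

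Part (ii) is the delicate one, since the target $C([0,T];H^{-\gamma})$ requires a uniform-in-$n$ modulus of continuity in time. The starting point is the Morrey-type embedding $W^{1/3,4}([0,T];H^{-\beta}) \hookrightarrow C^{1/12}([0,T];H^{-\beta})$, which follows from $\tfrac13-\tfrac14=\tfrac1{12}>0$ and gives uniform H\"older continuity in the weak space $H^{-\beta}$. Combining this with the interpolation inequality
$$
\|v\|_{H^{-\gamma}} \,\lesssim\, \|v\|_{L^2}^{1-\gamma/\beta}\,\|v\|_{H^{-\beta}}^{\gamma/\beta}
$$
and a time-mollification $f_n^h(t) := h^{-1}\int_t^{t+h} f_n(s)\,ds$ (which satisfies $\|f_n^h(t)\|_{L^2} \lesssim h^{-1/p}$ by H\"older, while $\|f_n-f_n^h\|_{L^\infty_t H^{-\beta}}\lesssim h^{1/12}$ from the H\"older continuity), one decomposes $f_n(t_2)-f_n(t_1)=(f_n(t_2)-f_n^h(t_2))+(f_n^h(t_2)-f_n^h(t_1))+(f_n^h(t_1)-f_n(t_1))$, applies the interpolation bound to each piece, and optimizes $h$ as a function of $|t_2-t_1|$. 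Tracking the exponents shows that the resulting modulus of continuity vanishes with $|t_2-t_1|\to0$ uniformly in $n$ precisely when $p > 12(\beta-\gamma)/\gamma$. Together with pointwise relative compactness in $H^{-\gamma}$ (coming from the compact embedding $L^2 \hookrightarrow H^{-\gamma}$ on $\T^3$ and the uniform bound on $f_n^h(t)$), Arzel\`a--Ascoli closes the argument.

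The hard part is therefore (ii): converting the mismatch between strong-norm integrability in time ($L^p$ in $L^2$) and pointwise weak-norm continuity in time ($C^{1/12}$ in $H^{-\beta}$) into a uniform modulus of continuity in the intermediate space $H^{-\gamma}$, and recovering the precise exponent threshold $p > 12(\beta-\gamma)/\gamma$. Everything else reduces to Rellich--Kondrachov, the fractional Sobolev embedding into H\"older spaces, Simon's theorem, and Arzel\`a--Ascoli.
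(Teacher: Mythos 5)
The paper does not prove this theorem at all: it is quoted from \cite[Corollary 5]{JSimon} (and its relatives) ``without proofs'', so there is no in-paper argument to compare against. Your parts (i) and (i$'$) are a correct and standard application of the Aubin--Lions--Simon criterion with the triples $(H^1,H^{1-\delta},H^{-\beta})$ and $(H^\Lambda,H^1,H^{-\beta})$; nothing more is needed there. For part (ii) your overall strategy (the Morrey embedding $W^{1/3,4}\hookrightarrow C^{1/12}$ since $\tfrac13\cdot 4>1$, interpolation of $H^{-\gamma}$ between $L^2$ and $H^{-\beta}$ with exponent $\theta=\gamma/\beta$, a time mollification, and Arzel\`a--Ascoli) is exactly the mechanism behind Simon's interpolation corollary, and your exponent count does reproduce the correct threshold: the H\"older exponent one obtains is $\tfrac{\theta}{12}-\tfrac{1-\theta}{p}$, which is positive precisely when $p>12(\beta-\gamma)/\gamma$.

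There is, however, one genuine gap in your treatment of the boundary pieces $f_n(t)-f_n^h(t)$. To put them into $H^{-\gamma}$ you invoke the interpolation inequality, whose $L^2$ factor is $\|f_n(t)-f_n^h(t)\|_{L^2}^{1-\theta}$; this requires a \emph{pointwise-in-time} bound on $\|f_n(t)\|_{L^2}$, which an $L^p([0,T];L^2)$ bound with finite $p$ does not supply (and cannot, since $f_n(t)$ need not even lie in $L^2$ for every $t$). Your bound $\|f_n^h(t)\|_{L^2}\lesssim h^{-1/p}$ controls only the mollified function, not $f_n(t)$ itself. The standard repair is a dyadic telescoping
$f_n(t)-f_n^h(t)=\sum_{k\ge 0}\bigl(f_n^{h2^{-k-1}}(t)-f_n^{h2^{-k}}(t)\bigr)$
(convergent in $H^{-\beta}$ by the H\"older continuity), where each increment is bounded by $\lesssim (h2^{-k})^{-1/p}$ in $L^2$ and by $\lesssim (h2^{-k})^{1/12}$ in $H^{-\beta}$; interpolating each term gives $\lesssim (h2^{-k})^{\theta/12-(1-\theta)/p}$, and the hypothesis $p>12(\beta-\gamma)/\gamma$ is exactly what makes this geometric series converge to $Ch^{\theta/12-(1-\theta)/p}$. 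With that correction both the equicontinuity and the pointwise total boundedness in $H^{-\gamma}$ close as you describe; alternatively you could simply cite Simon's interpolation corollary (the one the paper itself uses, with $s_\theta=\theta/3$, $1/r_\theta=(1-\theta)/p+\theta/4$ and the condition $s_\theta r_\theta>1$) instead of reproving it.
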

	
	Finally, we introduce two properties of the heat semigroup $\{e^{t \Delta}\}_{t\geq 0}$.
	
	\begin{lemma}\label{semigroup property}
		For any $s_1<s_2$, $\tau \in \R$, $\lambda>0$ and $f \in L^2([s_1,s_2];H^\tau)$, we have
		$$\int_{s_1}^{s_2} \bigg\|\int_{s_1}^{t} e^{\lambda (t-z)\Delta} f_z \, dz \bigg\|^2_{H^{\tau+2}}  dt \lesssim \lambda^{-2} \int_{s_1}^{s_2} \|f_z\|_{H^\tau}^2 \, dz.$$
	\end{lemma}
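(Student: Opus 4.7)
The plan is to prove this by Fourier analysis on the torus, reducing the estimate to a one-dimensional convolution bound for each Fourier mode. Write any zero-mean divergence-free field (or more generally any zero-mean $L^2$-field) as $f(z,x) = \sum_{k \in \Z_0^3} \hat{f}_k(z) e_k(x)$. Since the heat semigroup acts diagonally on Fourier modes, one has (up to an inessential constant depending only on the convention for $\Delta$ on $\T^3$)
$$\left(\int_{s_1}^{t} e^{\lambda(t-z)\Delta} f_z \, dz\right)^{\wedge}_k = \int_{s_1}^{t} e^{-\lambda(t-z)|k|^2} \hat{f}_k(z) \, dz.$$
Expanding the $H^{\tau+2}$ norm in Fourier coefficients and swapping the time integral with the sum over $k$, the quantity to be bounded becomes
$$\sum_{k \in \Z_0^3} |k|^{2(\tau+2)} \int_{s_1}^{s_2} \bigg|\int_{s_1}^{t} e^{-\lambda(t-z)|k|^2} \hat{f}_k(z) \, dz\bigg|^2 dt.$$

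Next, for each fixed $k$, I would view the inner time integral as a convolution on $\R$ (after zero-extending $\hat f_k$ outside $[s_1, s_2]$) of the $L^2$ function $\hat f_k$ with the $L^1$ kernel $g_k(\tau) := e^{-\lambda \tau |k|^2} \mathbf{1}_{\{\tau \geq 0\}}$. Young's convolution inequality gives
$$\int_{s_1}^{s_2} \bigg|\int_{s_1}^{t} e^{-\lambda(t-z)|k|^2} \hat{f}_k(z) \, dz\bigg|^2 dt \leq \|g_k\|_{L^1(\R)}^2 \int_{s_1}^{s_2} |\hat{f}_k(z)|^2 \, dz,$$
and an elementary computation yields $\|g_k\|_{L^1(\R)} = (\lambda |k|^2)^{-1}$.

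Plugging this in, the weight $|k|^{2(\tau+2)}$ is reduced by $|k|^{-4}$, leaving $|k|^{2\tau}$, so that the whole expression is bounded by
$$\lambda^{-2} \sum_{k \in \Z_0^3} |k|^{2\tau} \int_{s_1}^{s_2} |\hat{f}_k(z)|^2 \, dz = \lambda^{-2} \int_{s_1}^{s_2} \|f_z\|_{H^\tau}^2 \, dz,$$
which is exactly the claimed bound. There is no real obstacle: the only quantitative input is the explicit value of $\|g_k\|_{L^1}$, which gives the sharp gain of $\lambda^{-2} |k|^{-4}$ needed to absorb the two extra derivatives (i.e., the jump from $H^\tau$ to $H^{\tau+2}$), and this is simply the maximal regularity property of the heat semigroup in an $L^2$-in-time framework.
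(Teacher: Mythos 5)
Your proof is correct. The paper states this lemma without proof, treating it as a standard maximal-regularity property of the heat semigroup, and your argument---diagonalizing on Fourier modes, applying Young's convolution inequality mode by mode, and using $\|e^{-\lambda\cdot|k|^2}\mathbf{1}_{\{\cdot\ge 0\}}\|_{L^1}=(\lambda|k|^2)^{-1}$ to absorb the two extra derivatives---is precisely the canonical way to establish it. The one point worth making explicit is that the gain of $|k|^{-2}$ per mode requires $k\neq 0$, which holds here because the Sobolev spaces in the paper consist of mean-zero fields, so the sum runs over $\Z^3_0$ as you wrote.
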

	
	\begin{lemma}\label{semigroup prop 2}
		Let $f\in H^\tau$, $\tau \in \R$. For any $\rho \geq 0$, it holds
		$$\big\|e^{t\Delta} f\big\|_{H^{\tau+\rho}} \lesssim t^{-\rho/2} \|f\|_{H^\tau}.$$
	\end{lemma}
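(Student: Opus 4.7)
The plan is to prove Lemma \ref{semigroup prop 2} by explicit Fourier analysis on the torus, using the fact (stated below the introduction of notation in Section \ref{subsec-main results}) that all Sobolev spaces $H^s$ considered in this paper consist of divergence-free vector fields with zero spatial average. Thus any $f\in H^\tau$ admits an expansion $f=\sum_{k\in\Z^3_0}\hat f(k)\,e_k$ with $e_k(x)=e^{2\pi i k\cdot x}$, and the $H^s$-norm is equivalent to the weighted $\ell^2$-norm $\bigl(\sum_{k\in\Z^3_0}|k|^{2s}|\hat f(k)|^2\bigr)^{1/2}$ (no $k=0$ term).

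First I would observe that the heat semigroup diagonalizes in this basis, $e^{t\Delta}e_k=e^{-4\pi^2|k|^2 t}e_k$, so that
$$\bigl\|e^{t\Delta}f\bigr\|_{H^{\tau+\rho}}^2 \,\simeq\, \sum_{k\in\Z^3_0}|k|^{2(\tau+\rho)}\,e^{-8\pi^2|k|^2 t}\,|\hat f(k)|^2.$$
Next I would extract the multiplier $|k|^{2\rho}e^{-8\pi^2|k|^2 t}$ and bound it uniformly in $k$ by $t^{-\rho}$. Writing
$$|k|^{2\rho}\,e^{-8\pi^2|k|^2 t} \,=\, (8\pi^2 t)^{-\rho}\cdot\bigl(8\pi^2 t|k|^2\bigr)^{\rho}e^{-8\pi^2|k|^2 t}$$
and using the elementary inequality $\sup_{x\geq 0}x^{\rho}e^{-x}=:C_\rho<\infty$, which is valid precisely because $\rho\geq 0$, one obtains
$$|k|^{2\rho}\,e^{-8\pi^2|k|^2 t} \,\lesssim_\rho\, t^{-\rho}\qquad \text{for all }k\in\Z^3_0,\ t>0.$$
Plugging this into the Fourier series above yields
$$\bigl\|e^{t\Delta}f\bigr\|_{H^{\tau+\rho}}^2 \,\lesssim\, t^{-\rho}\sum_{k\in\Z^3_0}|k|^{2\tau}|\hat f(k)|^2 \,\simeq\, t^{-\rho}\|f\|_{H^\tau}^2,$$
and taking square roots gives the claim.

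This estimate is classical and there is no real obstacle. The only subtlety worth flagging is that on the full Sobolev scale on $\T^3$ one would pick up the $k=0$ contribution, for which $|k|^{2\rho}e^{-c|k|^2 t}\equiv 0$ but the low-frequency weight $(1+|k|^2)^{\rho}$ rather than $|k|^{2\rho}$ would not be absorbed by the $t^{-\rho}$ factor for large $t$; the mean-zero convention used throughout the paper eliminates this difficulty, so working with the homogeneous weight $|k|^{2s}$ is harmless and produces the stated bound with constant depending only on $\rho$.
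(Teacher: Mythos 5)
Your proof is correct. The paper states Lemma \ref{semigroup prop 2} without proof as a classical property of the heat semigroup, and your Fourier-multiplier argument --- diagonalizing $e^{t\Delta}$ on the exponentials $e_k$ and bounding $|k|^{2\rho}e^{-8\pi^2|k|^2t}\lesssim_\rho t^{-\rho}$ via $\sup_{x\ge 0}x^\rho e^{-x}<\infty$ --- is exactly the standard argument being invoked; your remark that the mean-zero convention is what makes the homogeneous weight (and hence the uniform-in-$t$ constant) legitimate is a correct and worthwhile observation.
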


	\section{Well-posedness}\label{sec-wellposedness}
	We prove Theorem \ref{well-posedness thm} in this section. Noting that the dissipation will be enhanced as $\Lambda$ increases, to show the well-posedness, it is enough for us to consider equation \eqref{Ito form} with $\Lambda=1$ and $\delta \in (0,\frac{1}{4})$. Since the proof is quite long, we will show the existence and pathwise uniqueness in Sections \ref{subsec-exist} and \ref{subsec-unique}, respectively.

	\subsection{Existence} \label{subsec-exist}
	To prove the existence of weak solutions, we use classical Galerkin approximations and compactness method. Recalling the space $H_m$ and projection $\Pi_m$ defined below the equation \eqref{eq-LDP}, we consider the Galerkin approximations of equation \eqref{Ito form} with $\Lambda=1$ and initial value $u_m(0)=\Pi_m u_0$:
	\begin{equation}\label{finite dimension}
		du_m+F_N(\|u_m\|_{H^{1-\delta}}) \Pi_m(u_m\cdot \nabla u_m) \, dt= \Delta u_m \, dt+\sqrt{\frac{3\nu}{2}}\sum_{k,i}  \theta_k \Pi_m(\sigma_{k,i} \cdot \nabla u_m) \, dW_t^{k,i}+S_\theta (u_m) \, dt.
	\end{equation}
	Here we mention that if $u_m \in H_m$, then $S_\theta (u_m) \in H_m$ (see \cite[Section 5]{FL21 PTRF}), so we do not have to make projection $\Pi_m$ on $S_\theta(u_m)$ again. Now we give the following lemma for the above equation. Recall that $H^s\ (s\in \R)$ consists of divergence-free vector fields.
	
	\begin{lemma}\label{finite energy estimate}
		Suppose $u_0 \in L^2$, then for every  $T\geq 0$, we have the following uniform estimate:
		$$\{u_m\}_{m\geq 1} \subset L^\infty([0,T];L^2) \cap L^2([0,T];H^1), \quad \P \text{-a.s.}.$$
		To be more precise, it holds
		$$ \sup_{m\geq 1} \sup_{t\in [0,T]}  \bigg[\|u_m(t)\|_{L^2}^2+2 \int_0^t \|\nabla u_m(s)\|_{L^2}^2 \, ds\bigg] \leq \|u_0\|_{L^2}^2, \quad \P \text{-a.s.}.$$
	\end{lemma}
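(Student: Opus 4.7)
My plan is to derive a pathwise energy identity for $u_m$ by applying It\^o's formula to $\|u_m(t)\|_{L^2}^2$ using the finite-dimensional SDE \eqref{finite dimension}. Since $u_m$ takes values in the finite-dimensional space $H_m$ of smooth divergence-free fields, there is no regularization issue, and everything reduces to inner-product bookkeeping. I then need to show that three separate divergence-free cancellations trivialize the equation and leave only the dissipative contribution.

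First I would handle the drift: the advection term contributes $-2 F_N(\|u_m\|_{H^{1-\delta}}) \<u_m, \Pi_m(u_m\cdot\nabla u_m)\>\,dt$; since $\Pi_m$ is $L^2$-selfadjoint and $u_m\in H_m$, this equals $- F_N(\cdots)\int_{\T^3} u_m\cdot\nabla|u_m|^2\,dx\,dt$, which vanishes after integration by parts using $\nabla\cdot u_m=0$, so the cutoff factor plays no role. The dissipative term $\<u_m,\Delta u_m\>$ gives $-\|\nabla u_m\|_{L^2}^2$. Each It\^o martingale integrand $\<u_m, \Pi_m(\sigma_{k,i}\cdot\nabla u_m)\>=\<u_m,\sigma_{k,i}\cdot\nabla u_m\>$ also vanishes identically, because $\sigma_{k,i}$ is divergence-free so $\int\sigma_{k,i}\cdot\nabla|u_m|^2\,dx=0$; thus the stochastic integral is pointwise zero, not merely a mean-zero martingale.

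The heart of the argument is the cancellation of the It\^o--Stratonovich corrector $2\<u_m,S_\theta(u_m)\>\,dt$ against the quadratic-variation term $3\nu\sum_{k,i}\theta_k^2\|\Pi_m(\sigma_{k,i}\cdot\nabla u_m)\|_{L^2}^2\,dt$. The cleanest route is to read \eqref{finite dimension} in its Stratonovich form (which is exactly what $S_\theta$ was defined to produce): the Stratonovich chain rule gives $\tfrac12\,d\|u_m\|_{L^2}^2=\<u_m,du_m\>$, and each Stratonovich noise term $\sqrt{3\nu/2}\,\theta_k\<u_m,\Pi_m(\sigma_{k,i}\cdot\nabla u_m)\>\circ dW^{k,i}$ vanishes by the same divergence-free cancellation already used for the It\^o integrand. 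Alternatively, one can verify the cancellation directly by integrating by parts inside the definition \eqref{Stheta def}, using that $\Pi$ is the identity on divergence-free test fields and exploiting the symmetries $\theta_k=\theta_{-k}$ and $a_{-k,i}=a_{k,i}$; this reduces $\<u_m,S_\theta(u_m)\>$ to $-\tfrac{3\nu}{2}\sum_{k,i}\theta_k^2\|\Pi(\sigma_{k,i}\cdot\nabla u_m)\|_{L^2}^2$, which matches the quadratic variation up to the factor of $2$.

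Combining these steps yields the pathwise identity
\[
\|u_m(t)\|_{L^2}^2+2\int_0^t\|\nabla u_m(s)\|_{L^2}^2\,ds=\|\Pi_m u_0\|_{L^2}^2\leq\|u_0\|_{L^2}^2,
\]
valid $\P$-a.s.\ for every $t\in[0,T]$, from which taking the supremum in $t$ and $m$ gives the claimed uniform bound. The only mildly delicate point is the complex bookkeeping required by the noise structure, which the Stratonovich-first viewpoint sidesteps entirely; this is the main obstacle I would expect in writing out the details carefully.
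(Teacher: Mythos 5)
Your proposal follows essentially the same route as the paper: It\^o's formula for $\|u_m\|_{L^2}^2$, divergence-free cancellation of the advection term and of every martingale integrand, and the pairing $\<u_m,S_\theta(u_m)\>=-\tfrac{3\nu}{2}\sum_{k,i}\theta_k^2\|\Pi(\sigma_{k,i}\cdot\nabla u_m)\|_{L^2}^2$ against the quadratic variation. The one point where you overclaim is the assertion that the corrector and the quadratic variation cancel \emph{exactly}, yielding a pathwise energy \emph{identity}. They do not: the Galerkin noise coefficient is $\Pi_m(\sigma_{k,i}\cdot\nabla u_m)$, so the quadratic variation is $3\nu\sum_{k,i}\theta_k^2\|\Pi_m(\sigma_{k,i}\cdot\nabla u_m)\|_{L^2}^2$, whereas $S_\theta$ is built from the full Leray projection $\Pi$; since $\sigma_{k,i}u_m$ has Fourier modes outside $H_m$, in general $\|\Pi_m(\sigma_{k,i}\cdot\nabla u_m)\|_{L^2}<\|\Pi(\sigma_{k,i}\cdot\nabla u_m)\|_{L^2}$, and the sum of the two terms is only $\leq 0$. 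For the same reason your Stratonovich-first shortcut is not quite legitimate as stated: converting the Galerkin It\^o equation \eqref{finite dimension} back to Stratonovich form produces the corrector with $\Pi_m$ throughout, not $S_\theta$, so a residual drift $\<u_m,S_\theta(u_m)-S^{(m)}_\theta(u_m)\>\leq 0$ survives. The paper acknowledges this via the inequality $\|\Pi_m(\sigma_{k,i}\cdot\nabla u_m)\|_{L^2}\leq\|\Pi(\sigma_{k,i}\cdot\nabla u_m)\|_{L^2}$ and concludes $d\|u_m\|_{L^2}^2\leq -2\|\nabla u_m\|_{L^2}^2\,dt$; since the discrepancy has a favorable sign, your final bound still holds, but you should state it as an inequality, which is also all the lemma claims.
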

	
	\begin{proof}
		Fixing any $m\geq 1$, we make energy estimate for the equation \eqref{finite dimension}:
		\begin{equation}\label{energy estimate decomposition}
			\begin{split}
				d\|u_m\|_{L^2}^2&=2\<u_m, du_m\>+\<du_m,du_m\>\\
				&=-2\big\<u_m,F_N(\|u_m\|_{H^{1-\delta}}) \Pi_m(u_m\cdot \nabla u_m)\big\>  dt-2 \|\nabla u_m\|_{L^2}^2 \, dt+2\big\<u_m,S_\theta(u_m)\big\> dt\\
				&\quad+2\sqrt{\frac{3\nu}{2}}\sum_{k,i} \theta_k \big\<u_m,\Pi_m(\sigma_{k,i} \cdot \nabla u_m) \big\> dW^{k,i}_t+ 3\nu\sum_{k,i} \theta_k^2 \big\|\Pi_m(\sigma_{k,i} \cdot \nabla u_m) \big\|_{L^2}^2 dt.
			\end{split}
		\end{equation}
		Using the facts that $\text{div} \, \sigma_{k,i}=0$ and $\text{div} \, u_m=0$, the first term and the martingale part on the right-hand side vanish. Besides, noticing the estimates $ \|\Pi_m(\sigma_{k,i} \cdot \nabla u_m) \|_{L^2} \leq  \|\Pi(\sigma_{k,i} \cdot \nabla u_m) \|_{L^2}$ and
		$$\big\<u_m, S_\theta(u_m) \big\> =-\frac{3\nu}{2}\sum_{k,i} \theta_k^2 \big\<\sigma_{k,i} \cdot \nabla u_m, \Pi (\sigma_{-k,i} \cdot \nabla u_m) \big\>=-\frac{3\nu}{2}\sum_{k,i} \theta_k^2 \big\|\Pi (\sigma_{k,i} \cdot \nabla u_m)\big\|_{L^2}^2,$$
		we have
		$$2\big\<u_m,S_\theta(u_m)\big\> dt+3\nu\sum_{k,i} \theta_k^2 \big\|\Pi_m(\sigma_{k,i} \cdot \nabla u_m) \big\|_{L^2} dt\leq 0.$$
		Hence we obtain
		$d\|u_m\|_{L^2}^2 \leq -2 \|\nabla u_m\|_{L^2}^2 \, dt.$
		Integrating the  energy inequality yields
		$$\|u_m(t)\|_{L^2}^2+2\int_0^t \|\nabla u_m(s)\|_{L^2}^2 \, ds \leq \|u_m(0)\|_{L^2}^2 \leq \|u_0\|_{L^2}^2.$$
		The proof is complete.
	\end{proof}
	
	To apply the compact embedding result in Theorem \ref{compact embedding}, we also need the following lemma.
	\begin{lemma}\label{bdd in sobolev space}
		For every $T> 0$ and $\beta>5/2$, there exists a finite constant $C>0$ depending on $\nu$, $T$, $\|u_0\|_{L^2}$, $\|\theta\|_{\ell^\infty}$, but independent of $m$, such that for any $0\leq s<t \leq T$, the solution to \eqref{finite dimension} satisfies
		$$\E \, \big\|u_m(t)-u_m(s)\big\|_{H^{-\beta}}^4 \leq C |t-s|^2.$$
	\end{lemma}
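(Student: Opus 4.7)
The plan is to integrate equation \eqref{finite dimension} from $s$ to $t$ and to bound each of the four contributions separately in $H^{-\beta}$. Writing
$$
u_m(t)-u_m(s)=I_1+I_2+I_3+I_4,
$$
where $I_1$ comes from the nonlinear drift, $I_2$ from the Laplacian, $I_3$ from the It\^o--Stratonovich corrector $S_\theta$, and $I_4$ from the stochastic integral. One then estimates $\E\|I_j\|_{H^{-\beta}}^4$ for each $j$ and sums the four contributions.

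For the three drift terms, I would use the divergence-free condition as the main tool so that a derivative can be transferred to the test function. Since $\operatorname{div}\,u_m=0$, we have $u_m\cdot\nabla u_m=\nabla\cdot(u_m\otimes u_m)$; together with $\beta>5/2$ and the embedding $H^{\beta-1}\hookrightarrow L^\infty(\T^3)$, duality yields $\|\Pi_m(u_m\cdot\nabla u_m)\|_{H^{-\beta}}\lesssim\|u_m\|_{L^2}^2$. Because $F_N\le 1$ and Lemma \ref{finite energy estimate} gives $\|u_m(\tau)\|_{L^2}\le\|u_0\|_{L^2}$ pathwise, we get $\|I_1\|_{H^{-\beta}}\le C(t-s)\|u_0\|_{L^2}^2$. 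The Laplacian term is immediate from $\|\Delta u_m\|_{H^{-\beta}}\le\|u_m\|_{L^2}$ (valid since $\beta\ge 2$), yielding $\|I_2\|_{H^{-\beta}}\le C(t-s)\|u_0\|_{L^2}$. For the corrector, using $\operatorname{div}\,\sigma_{k,i}=0$ twice in \eqref{Stheta def} gives $\|\sigma_{k,i}\cdot\nabla\Pi(\sigma_{-k,i}\cdot\nabla u_m)\|_{H^{-2}}\lesssim\|u_m\|_{L^2}$, so that summing in $(k,i)$ with $\sum_k\theta_k^2=\|\theta\|_{\ell^2}^2$ produces $\|S_\theta(u_m)\|_{H^{-\beta}}\le C\nu\|u_m\|_{L^2}$ and hence $\|I_3\|_{H^{-\beta}}\le C\nu(t-s)\|u_0\|_{L^2}$. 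Altogether $\E\|I_1+I_2+I_3\|_{H^{-\beta}}^4\le C|t-s|^4\le CT^2|t-s|^2$.

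For the martingale term $I_4$, I would apply the Burkholder--Davis--Gundy inequality in the Hilbert space $H^{-\beta}$. The key uniform-in-$k$ estimate is
$$
\|\Pi_m(\sigma_{k,i}\cdot\nabla u_m)\|_{H^{-\beta}}=\|\Pi_m\nabla\cdot(\sigma_{k,i}u_m)\|_{H^{-\beta}}\lesssim\|\sigma_{k,i}u_m\|_{L^2}\le\|\sigma_{k,i}\|_{L^\infty}\|u_m\|_{L^2}=\|u_m\|_{L^2},
$$
again using $\beta-1>3/2$ and the fact that $|a_{k,i}|=1$. Then BDG and Lemma \ref{finite energy estimate} yield
$$
\E\|I_4\|_{H^{-\beta}}^4\lesssim \nu^2\,\E\bigg[\bigg(\sum_{k,i}\theta_k^2\!\int_s^t\!\|\Pi_m(\sigma_{k,i}\cdot\nabla u_m)\|_{H^{-\beta}}^2\,d\tau\bigg)^{\!\!2}\bigg]\lesssim \nu^2\|\theta\|_{\ell^2}^4|t-s|^2\|u_0\|_{L^2}^4,
$$
which is $\le C|t-s|^2$ and closes the proof.

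The only delicate point is obtaining the $k$-uniform bound on $\|\sigma_{k,i}\cdot\nabla u_m\|_{H^{-\beta}}$: a direct application of Lemma \ref{HHH} to the product $\sigma_{k,i}\cdot u_m$ would introduce a factor growing like $|k|$ (through $\|\sigma_{k,i}\|_{H^{\beta-1}}$), and would be fatal for the stochastic integral. The reformulation $\sigma_{k,i}\cdot\nabla u_m=\nabla\cdot(\sigma_{k,i}u_m)$ followed by the trivial $L^\infty$-boundedness of $\sigma_{k,i}$ is precisely what sidesteps this and lets $\ell^2$-summability of $\theta$ (rather than some higher moment) close the estimate; the same device also drives the bounds on $I_1$ and $I_3$.
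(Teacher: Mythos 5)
Your overall decomposition into the four contributions and the treatment of the nonlinear term, the Laplacian, and the stochastic integral all match the paper's proof in spirit and are correct. In fact your bound $\|\Pi_m(u_m\cdot\nabla u_m)\|_{H^{-\beta}}\lesssim\|u_m\|_{L^2}^2$ (via $L^1\hookrightarrow H^{1-\beta}$) is cleaner than the paper's, which settles for $\|u_m\|_{H^1}^{1/2}\|u_m\|_{L^2}^{3/2}$; and for the martingale term your uniform-in-$k$ bound $\|\Pi_m(\sigma_{k,i}\cdot\nabla u_m)\|_{H^{-\beta}}\lesssim\|u_m\|_{L^2}$, closed with $\|\theta\|_{\ell^2}$, is a legitimate alternative to the paper's route, which instead extracts $\|\theta\|_{\ell^\infty}^2$ and uses the summability of $\sum_l|l|^{-2(\beta-1)}$.

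The genuine gap is in the corrector term. Your claim that ``using $\operatorname{div}\sigma_{k,i}=0$ twice'' gives $\|\sigma_{k,i}\cdot\nabla\Pi(\sigma_{-k,i}\cdot\nabla u_m)\|_{H^{-2}}\lesssim\|u_m\|_{L^2}$ uniformly in $k$ is false. The device that saves the martingale term works there because the object multiplied by $\sigma_{k,i}$ is $u_m\in L^2$, and modulation by $e_k$ is an $L^2$-isometry. In the corrector, the object multiplied by $\sigma_{k,i}$ is $g_k:=\Pi\nabla\cdot(\sigma_{-k,i}\otimes u_m)$, which only lies in $H^{-1}$ and whose Fourier support sits at frequencies $m-k$; multiplying by $e_k$ demodulates it back to frequencies $m$, and since $\|e_k g\|_{H^{-1}}$ can exceed $\|g\|_{H^{-1}}$ by a factor of order $|k|$ (take $g$ supported near frequency $-k$), the term-by-term estimate degrades like $|k|\|u_m\|_{L^2}$. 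One can check this on Fourier coefficients: the mode-$m$ coefficient of $\sigma_{k,i}\cdot\nabla Q(\sigma_{-k,i}\cdot\nabla u_m)$ carries a factor $(a_{k,i}\cdot m)^2|k|\,|m-k|^{-1}$, which after division by $1+|m|^2$ is of size $|k|/|m-k|$. Summing with weights $\theta_k^2$ would then require control of quantities like $\sum_k\theta_k^2|k|$, which is not bounded by $\|\theta\|_{\ell^2}$ and $\|\theta\|_{\ell^\infty}$ alone; this matters because the lemma is later invoked for the sequence $\theta^n$ with support growing to all of $\Z^3_0$, where uniformity is essential for tightness. The fix is the paper's estimate \eqref{eq-S duality}: pair $S_\theta(u_m)$ with $v\in H^1$ and put \emph{one} derivative on each side, using $\|\sigma_{\pm k,i}\cdot\nabla f\|_{L^2}\le\|\nabla f\|_{L^2}$, which yields $\|S_\theta(u_m)\|_{H^{-\beta}}\lesssim\nu\|\theta\|_{\ell^2}^2\|u_m\|_{H^1}$ uniformly in $k$. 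The price is that $\|u_m\|_{H^1}$ is only square-integrable in time, so this contribution gives only $|t-s|^2$ (not $|t-s|^4$) after the H\"older step
$$\Big(\int_s^t\|u_m\|_{H^1}\,dz\Big)^4\le|t-s|^2\Big(\int_0^T\|u_m\|_{H^1}^2\,dz\Big)^2\lesssim|t-s|^2\|u_0\|_{L^2}^4,$$
which is still exactly what the lemma requires.
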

	
	\begin{proof}
		For any $0\leq s<t \leq T$, equation \eqref{finite dimension} yields
		\begin{equation}\label{umt-ums}
			\begin{split}
				&\quad \  \big\|u_m(t)-u_m(s)\big\|_{H^{-\beta}}\\
				&\leq \int_{s}^{t} F_N(\|u_m(z)\|_{H^{1-\delta}}) \big\|\Pi_m \big(u_m(z) \cdot \nabla u_m(z)\big)\big\|_{H^{-\beta}} \, dz+ \int_{s}^{t} \|\Delta u_m(z)\|_{H^{-\beta}} \, dz\\
				&\quad+\sqrt{\frac{3\nu}{2}} \, \bigg\| \sum_{k,i} \int_{s}^{t} \theta_k \Pi_m(\sigma_{k,i} \cdot \nabla u_m(z)) \, dW^{k,i}_z \bigg\|_{H^{-\beta}} + \int_{s}^{t} \big\|S_\theta(u_m(z))\big\|_{H^{-\beta}} \, dz\\
				&=:I_1+I_2+I_3+I_4.
			\end{split}
		\end{equation}
		We will deal with these four terms respectively. For the first term, noticing that for any $z\in [0,T]$, it holds $F_N(\|u_m(z)\|_{H^{1-\delta}}) \leq 1$, so we use H\"older's inequality to get
		$$ I_1^4 \leq |t-s|^3 \int_{s}^{t} \big\|u_m(z) \cdot \nabla u_m(z)\big\|^4_{H^{-\beta}} \, dz. $$
		Taking any vector field $\phi \in C^\infty$, we have
		$$\aligned
		\big|\big\<u_m \cdot \nabla u_m, \phi\big\>\big|=\big|\big\<u_m \cdot \nabla \phi, u_m\big\>\big| \lesssim \|u_m\|_{H^\frac{1}{2}} \|u_m\cdot \nabla \phi\|_{H^{-\frac{1}{2}}}
		&\lesssim
		\|u_m\|_{H^{\frac{1}{2}}}\|u_m\|_{L^2} \|\nabla \phi\|_{H^1}\\
		&\leq \|u_m\|_{H^1}^{\frac{1}{2}} \|u_m\|_{L^2}^{\frac{3}{2}} \|\phi\|_{H^2},
		\endaligned $$
		which implies that $\|u_m \cdot \nabla u_m\|_{H^{-\beta}} \lesssim \|u_m\|_{H^1}^{\frac{1}{2}} \|u_m\|_{L^2}^{\frac{3}{2}}$ for any $\beta>5/2$. Hence by Lemma \ref{finite energy estimate}, we obtain
		$$ \E  I_1^4 \lesssim |t-s|^3 \|u_0\|_{L^2}^6 \int_{s}^{t} \|u_m\|_{H^1}^2 \, dz \lesssim |t-s|^3 \|u_0\|_{L^2}^8.$$
		
		Then we turn to the second term. For $\beta>5/2$, we have
		$$\E I_2^4 \leq  |t-s|^3 \int_{s}^t \|u_m(z)\|_{L^2}^4 \, dz \leq  |t-s|^4 \|u_0\|_{L^2}^4.$$
		As for the next term, Burkholder-Davis-Gundy inequality yields
		$$\E I_3^4 \lesssim\nu^2 \, \E \bigg[ \Big|\sum_{k,i} \theta_k^2 \int_{s}^{t} \big\| \Pi_m(\sigma_{k,i} \cdot \nabla u_m(z))\big\|_{H^{-\beta}}^2  \, dz\Big|^2 \bigg].$$
		Furthermore, noticing that $\sigma_{k,i}$ is divergence-free, we have
		$$\sum_{k,i} \theta_k^2 \big\| \Pi_m(\sigma_{k,i} \cdot \nabla u_m(z))\big\|_{H^{-\beta}}^2 \leq \|\theta\|_{\ell^\infty}^2 \sum_{k,i}  \|\sigma_{k,i} u_m(z)\|_{H^{-\beta+1}}^2 \lesssim \|\theta\|_{\ell^\infty}^2 \sum_k \|e_k u_m(z)\|_{H^{-\beta+1}}^2;$$
		for $\beta>5/2$, it holds
		$$\sum_k \|e_k u_m(z)\|_{H^{-\beta+1}}^2 \leq \sum_k \sum_l \frac{1}{|l|^{2(\beta-1)}} \big|\<u_m, e_{l-k}\>\big|^2 \lesssim \|u_m(z)\|_{L^2}^2 \sum_l \frac{1}{|l|^{2(\beta-1)}} \lesssim \|u_m(z)\|_{L^2}^2,$$
		which can be combined with the above estimate to obtain
		$$\sum_{k,i} \theta_k^2 \big\| \Pi_m(\sigma_{k,i} \cdot \nabla u_m(z))\big\|_{H^{-\beta}}^2  \lesssim \|\theta\|_{\ell^\infty}^2 \|u_m(z)\|_{L^2}^2.$$
		Hence we apply Lemma \ref{finite energy estimate} to arrive at $\E I_3^4 \lesssim \nu^2  |t-s|^2 \|\theta\|_{\ell^\infty}^4 \|u_0\|_{L^2}^4$.
		
		Finally, we use the method of duality to estimate the remaining term. For any $v \in H^1$, by the divergence-free property of $\sigma_{k,i}$, we use Cauchy-Schwarz inequality to obtain
		\begin{equation}\label{eq-S duality}
			\begin{split}
				\big|\big\<S_\theta(u_m),v\big\>\big|&= \bigg|\frac{3\nu}{2}\sum_{k,i}\theta_k^2\big\<\Pi(\sigma_{-k,i}\cdot \nabla u_m),\sigma_{k,i} \cdot \nabla v\big\>\bigg|\\
				&\lesssim \nu \sum_{k,i} \theta_k^2 \|\sigma_{-k,i} \cdot \nabla u_m\|_{L^2} \|\sigma_{k,i} \cdot \nabla v\|_{L^2} \\
				&\leq \nu \bigg(\sum_{k,i} \theta_k^2 \|\sigma_{-k,i} \cdot \nabla u_m\|_{L^2}^2\bigg)^{1/2} \bigg(\sum_{k,i} \theta_k^2 \|\sigma_{k,i} \cdot \nabla v\|_{L^2}^2\bigg)^{1/2} \\
				&\leq \nu \|\theta\|_{\ell^2}^2 \|\nabla u_m\|_{L^2} \|\nabla v\|_{L^2},
			\end{split}
		\end{equation}
		which implies that $\|S_\theta(u_m)\|_{H^{-\beta}} \lesssim \nu \|u_m\|_{H^1} \|\theta\|_{\ell^2}^2$ for $\beta>5/2$. Since we have assumed that $\|\theta\|_{\ell^2}=1$, by Lemma \ref{finite energy estimate}, it holds
		$$\E I_4^4\lesssim \nu^4  \Big(\int_{s}^{t} \|u_m(z)\|_{H^1} \, dz\Big)^4 \leq \nu^4 |t-s|^2 \Big(\int_{s}^{t} \|u_m(z)\|_{H^1}^2 \, dz\Big)^2  \lesssim \nu^4 \|u_0\|_{L^2}^4 |t-s|^2.$$
		Substituting the above estimates for $I_i^4, \, i=1,\ldots,4$ into \eqref{umt-ums}, we complete the proof.
	\end{proof}
	
	Based on Lemma \ref{bdd in sobolev space}, now we can apply Theorem \ref{compact embedding} to further present
	\begin{corollary}\label{tight}
		If we denote $\eta_m$ as the law of $u_m$ for $m\geq 1$, then $\{\eta_m\}_{m\geq 1}$ is tight on $\mathcal{X}$, which is defined above Theorem \ref{scaling limit} with $\delta \in (0,\frac{1}{4})$.
	\end{corollary}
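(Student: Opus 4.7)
The plan is, for each $\varepsilon>0$, to exhibit a compact subset $K_\varepsilon \subset \mathcal{X}$ such that $\eta_m(K_\varepsilon)\geq 1-\varepsilon$ uniformly in $m$. The candidate sets will be (intersections of) bounded balls in the two Banach spaces
\[ Y_1 := L^2([0,T];H^1) \cap W^{\alpha,2}([0,T];H^{-\beta}), \quad Y_2 := L^p([0,T];L^2) \cap W^{1/3,4}([0,T];H^{-\beta}), \]
with $\alpha\in(0,1/2)$, $\beta>5/2$ and $p$ chosen as in Theorem \ref{compact embedding}(ii). By parts (i) and (ii) of that theorem, bounded subsets of $Y_1$ and $Y_2$ embed compactly into $L^2([0,T];H^{1-\delta})$ and $C([0,T];H^{-\gamma})$ respectively, so their intersection is compact in $\mathcal{X}$.

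First I would invoke Lemma \ref{finite energy estimate} to get the pathwise uniform bound $\|u_m\|_{L^\infty([0,T];L^2)}^2+2\|\nabla u_m\|_{L^2([0,T];L^2)}^2\leq \|u_0\|_{L^2}^2$. This immediately controls the $L^2([0,T];H^1)$-norm in $Y_1$ and the $L^p([0,T];L^2)$-norm in $Y_2$ (for any finite $p$, via the $L^\infty$ bound), with deterministic constants independent of $m$.

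Next I would use the fourth-moment estimate from Lemma \ref{bdd in sobolev space}, $\E\|u_m(t)-u_m(s)\|_{H^{-\beta}}^4\leq C|t-s|^2$, to bound the fractional Sobolev seminorms. Directly, by Fubini,
\[ \E \int_0^T\!\!\int_0^T \frac{\|u_m(t)-u_m(s)\|_{H^{-\beta}}^4}{|t-s|^{1+4/3}}\,dt\,ds \lesssim \int_0^T\!\!\int_0^T |t-s|^{-1/3}\,dt\,ds < \infty, \]
so the $W^{1/3,4}([0,T];H^{-\beta})$ seminorm is uniformly bounded in $L^4(\Omega)$. By Jensen, $\E\|u_m(t)-u_m(s)\|_{H^{-\beta}}^2\leq \sqrt{C}\,|t-s|$, hence for any $\alpha<1/2$,
\[ \E \int_0^T\!\!\int_0^T \frac{\|u_m(t)-u_m(s)\|_{H^{-\beta}}^2}{|t-s|^{1+2\alpha}}\,dt\,ds \lesssim \int_0^T\!\!\int_0^T |t-s|^{-2\alpha}\,dt\,ds < \infty, \]
uniformly in $m$, which controls the $W^{\alpha,2}([0,T];H^{-\beta})$ seminorm in $L^2(\Omega)$.

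Finally, I would combine these moment bounds with Markov's inequality to conclude that $\P(\|u_m\|_{Y_1}>R_1)+\P(\|u_m\|_{Y_2}>R_2)$ can be made smaller than $\varepsilon$ by choosing $R_1,R_2$ large enough, uniformly in $m$. Taking $K_\varepsilon$ to be the intersection of the corresponding closed balls, viewed as a subset of $\mathcal{X}$ via the compact embeddings, yields $\eta_m(K_\varepsilon)\geq 1-\varepsilon$ and therefore the tightness of $\{\eta_m\}_{m\geq 1}$ on $\mathcal{X}$. The argument is essentially routine once Lemmas \ref{finite energy estimate} and \ref{bdd in sobolev space} are in hand; the only delicate point is verifying that the parameters $(\alpha,\beta,p,\delta,\gamma)$ can simultaneously be chosen so that both fractional-Sobolev integrals converge and both compact embeddings of Theorem \ref{compact embedding} apply, which is why the hypothesis $\delta\in(0,1/4)$ appears.
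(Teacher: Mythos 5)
Your proposal is correct and follows essentially the same route as the paper: both derive uniform bounds on $\E\,\|u_m\|^2_{W^{\alpha,2}([0,T];H^{-\beta})}$ and $\E\,\|u_m\|^4_{W^{1/3,4}([0,T];H^{-\beta})}$ from Lemma \ref{bdd in sobolev space} (the paper uses Cauchy--Schwarz where you use Jensen, which is the same estimate), combine them with the pathwise energy bound of Lemma \ref{finite energy estimate}, and conclude via the compact embeddings of Theorem \ref{compact embedding}(i),(ii). The only difference is that you spell out the final Markov-inequality step producing the compact sets $K_\varepsilon$, which the paper leaves implicit.
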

	
	\begin{proof}
		By Lemma \ref{bdd in sobolev space} and the definition of fractional Sobolev norms, for $\alpha\in (0,1/2)$ and $\beta>5/2$, we have
		$$\aligned
		\E \, \|u_m\|^2_{W^{\alpha,2}([0,T];H^{-\beta})}&=\E \int_0^T \int_0^T \frac{\|u_m(t)-u_m(s)\|_{H^{-\beta}}^2}{|t-s|^{1+2\alpha}} \, dtds+\E \, \|u_m\|_{L^2([0,T];H^{-\beta})}^2\\
		&\lesssim \int_0^T \int_0^T \frac{\big(\E \, \|u_m(t)-u_m(s)\|_{H^{-\beta}}^4\big)^{1/2} }{|t-s|^{1+2\alpha}} \, dtds+T\|u_0\|_{L^2}^2\\
		&\lesssim \int_{0}^{T} \int_{0}^{T} \frac{1}{|t-s|^{2\alpha}} dtds+T\|u_0\|_{L^2}^2 <+\infty.
		\endaligned $$
		Similarly, we can verify that
		$$\aligned
		\E \, \|u_m\|^4_{W^{\frac{1}{3},4}([0,T];H^{-\beta})} &=\int_{0}^{T} \int_{0}^{T} \frac{\E \, \|u_m(t)-u_m(s)\|_{H^{-\beta}}^4}{|t-s|^{1+\frac{4}{3}}} \, dtds+\E \, \|u_m\|^4_{L^4([0,T];H^{-\beta})}\\
		&\lesssim \int_{0}^{T} \int_{0}^{T}  \frac{1}{|t-s|^{\frac{1}{3}}} \, dtds +T \|u_0\|^4_{L^2} < +\infty.
		\endaligned $$
		Recalling that we have proved in Lemma \ref{finite energy estimate} that $\{u_m\}_{m\geq 1} \subset L^\infty(0,T;L^2) \cap L^2(0,T;H^1)$, $\P$-a.s.,  the proof is completed by applying Theorem \ref{compact embedding} (i) and (ii).
	\end{proof}

	Thanks to the above preparations, now we can start to prove the existence of weak solutions to equation \eqref{Ito form}. We regard the sequence of Brownian motion $W:=\big\{W^{k,i}: k \in \Z_0^3, i=1,2\big\}$ as a random variable with values in $\mathcal{Y}:= C([0,T],\mathbb{C}^{\Z_0^3})$. Denote $P_m$ as the joint law of $(u_m, W)$ for each $m\geq 1$, we claim that  $\{P_m\}_{m\geq 1}$ is tight on $\mathcal{X} \times \mathcal{Y}$.
	By Prohorov theorem, there exists a subsequence $\{m_j\}_{j\geq 1}$ such that $P_{m_j}$ converges weakly to some probability measure $P$ on $\mathcal{X}\times \mathcal{Y}$ as $j \rightarrow \infty$.
	
	Applying Skorohod representation theorem, there exists a new probability space $(\tilde{\Omega},\tilde{\mathcal{F}},\tilde{\P})$, on which we can define a family $\tilde{W}^{m_j}:=\big\{\tilde{W}^{m_j,k,i}: k\in \Z_0^3, i=1,2\big\}$ of independent complex Brownian motions and a sequence of stochastic processes $\{\tilde{u}_{m_j}\}_{j\geq 1}$, together with stochastic process $\tilde{u}$ and Brownian motion $\tilde{W}$, such that
	
	\noindent (1) for any $j\geq 1$, $(\tilde{u}_{m_j},\tilde{W}^{m_j})$ has the same joint law as $(u_{m_j},W)$;
	
	\noindent (2) $\tilde{\P}$-a.s., $(\tilde{u}_{m_j},\tilde{W}^{m_j})$ converges to the limit $(\tilde{u},\tilde{W})$ in $\mathcal{X}\times \mathcal{Y}$ as $j\rightarrow \infty$.
	
	Now we turn to prove that $\big(\tilde{u},\tilde{W}\big)$ is a weak solution to equation \eqref{Ito form} with $\Lambda=1$. Since for each $j\geq 1$, $(\tilde{u}_{m_j},\tilde{W}^{m_j})$ has the same law as $(u_{m_j},W)$, then for any divergence-free vector fields $\phi \in C^\infty$, it holds
	\begin{equation}\label{existence approx}
		\begin{split}
			\<\tilde{u}_{m_j}(t),\phi\>&=\<\Pi_{m_j} u_0, \phi\>+ \int_{0}^{t} F_N(\|\tilde{u}_{m_j}(s)\|_{H^{1-\delta}})\big\<\tilde{u}_{m_j}(s) \cdot \nabla \Pi_{m_j} \phi, \tilde{u}_{m_j}(s)\big\> \, ds\\
			&\quad +\int_{0}^{t} \<\tilde{u}_{m_j}(s),\Delta \phi \> \, ds+\int_{0}^{t} \big\<\tilde{u}_{m_j}(s),S_\theta(\phi) \big\> \, ds\\
			&\quad-\sqrt{\frac{3\nu}{2}} \sum_{k,i} \theta_k \int_{0}^{t} \big\<\tilde{u}_{m_j}(s),\sigma_{k,i} \cdot \nabla\Pi_{m_j} \phi \big\> \, d\tilde{W}^{m_j,k,i}_s.
		\end{split}
	\end{equation}
	The convergence for the first term concerning the initial value is trivial.
	For $\phi \in C^\infty(\T^3,\R^3)$, we have $\Delta \phi$ is smooth, and by \cite[Lemma 5.2]{FL21 PTRF}, $S_\theta(\phi)$ is also a smooth divergence-free vector field. Hence applying the fact that $\tilde{u}_{m_j}$ converges to $\tilde{u}$ in $\mathcal{X}$ and dominated convergence theorem, we can obtain the convergence of the third and the fourth terms on the right-hand side.
	
	Before providing the proof of convergence for the other two terms, we first give a lemma, which is similar to \cite[Corollary 3.7]{FL21 PTRF}, and one can find the proof details therein.
	\begin{lemma}\label{u trajectory}
		The limit $\tilde{u}$ satisfies the following bounds $\tilde{\P}$-a.s.,
		$$\|\tilde{u}\|_{L^\infty([0,T];L^2)} \leq \|u_0\|_{L^2}, \quad \|\tilde{u}\|_{L^2([0,T];H^1)} \lesssim  \|u_0\|_{L^2}.$$
	\end{lemma}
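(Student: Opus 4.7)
The plan is to transfer the pathwise energy estimate of Lemma \ref{finite energy estimate} to the Skorohod copies $\tilde u_{m_j}$ via equality in law, and then pass to the limit along each trajectory by combining weak/weak-$\ast$ compactness with lower semicontinuity of norms. The $\mathcal{X}$-convergence is too weak to retain the $L^\infty_tL^2_x$ and $L^2_tH^1_x$ bounds directly, so we must extract separate weak limits in these larger spaces and verify that they coincide with $\tilde u$.

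First, since the inequality in Lemma \ref{finite energy estimate} is a measurable functional of the trajectory and $\tilde u_{m_j}\stackrel{d}{=} u_{m_j}$, we obtain $\tilde\P$-a.s.\ that
\begin{equation*}
	\sup_{j\ge 1}\bigg[\sup_{t\in[0,T]} \|\tilde u_{m_j}(t)\|_{L^2}^2 + 2\int_0^T \|\nabla \tilde u_{m_j}(s)\|_{L^2}^2\,ds\bigg] \le \|u_0\|_{L^2}^2.
\end{equation*}
Fix $\omega$ in the full-measure set where this bound holds and simultaneously $\tilde u_{m_j}(\omega)\to \tilde u(\omega)$ in $\mathcal{X}$. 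By Banach-Alaoglu, some sub-subsequence (still denoted by $j$) converges weakly-$\ast$ in $L^\infty([0,T];L^2)$ to some $v$ and weakly in $L^2([0,T];H^1)$ to some $w$.

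Next, I would identify these weak limits with $\tilde u(\omega)$. For any smooth test function $\varphi\in C_c^\infty((0,T)\times\T^3)$, the $L^2([0,T];H^{-\gamma})$ component of the $\mathcal{X}$-convergence forces
\begin{equation*}
	\int_0^T \langle \tilde u_{m_j}(s),\varphi(s)\rangle\,ds \longrightarrow \int_0^T \langle \tilde u(s),\varphi(s)\rangle\,ds,
\end{equation*}
while the weak and weak-$\ast$ convergences give the same pairing with $w$ and $v$ respectively. By density of smooth functions in the corresponding predual spaces, $v=w=\tilde u(\omega)$ as distributions, and consequently $\tilde u(\omega)\in L^\infty([0,T];L^2)\cap L^2([0,T];H^1)$.

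Finally, lower semicontinuity of the weak-$\ast$ and weak norms yields
\begin{equation*}
	\|\tilde u(\omega)\|_{L^\infty([0,T];L^2)} \le \liminf_j \|\tilde u_{m_j}(\omega)\|_{L^\infty([0,T];L^2)} \le \|u_0\|_{L^2},
\end{equation*}
\begin{equation*}
	\|\nabla \tilde u(\omega)\|_{L^2([0,T];L^2)} \le \liminf_j \|\nabla \tilde u_{m_j}(\omega)\|_{L^2([0,T];L^2)} \le \tfrac{1}{\sqrt{2}}\|u_0\|_{L^2},
\end{equation*}
and together with Poincar\'e's inequality on zero-mean divergence-free fields this gives $\|\tilde u\|_{L^2([0,T];H^1)}\lesssim \|u_0\|_{L^2}$. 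The main point needing attention is the identification step: once one verifies that the $\mathcal{X}$-limit and the weak/weak-$\ast$ limits agree as distributions, the rest is routine. The argument is parallel to \cite[Corollary 3.7]{FL21 PTRF}, which we invoke for the measurability/selection details.
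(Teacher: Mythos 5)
Your proof is correct and is essentially the argument the paper relies on: the paper gives no details itself, deferring to \cite[Corollary 3.7]{FL21 PTRF}, whose proof is precisely this transfer of the pathwise energy bound by equality in law followed by weak/weak-$\ast$ compactness, identification of the limits with $\tilde u$ through the $\mathcal{X}$-convergence, and lower semicontinuity of the norms. Your explicit treatment of the identification step and the measurability of the energy functional on $\mathcal{X}$ is a welcome elaboration, not a deviation.
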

	
	Then we turn to prove the convergence for the nonlinear term, which can be decomposed as (we omit the time parameter $s$ for convenience)
	\begin{equation}\label{nonlinear convergence J1J2}
		\begin{split}
			& \tilde{\E} \bigg[\sup_{t\in [0,T]} \bigg|\int_{0}^{t} F_N(\|\tilde{u}_{m_j}\|_{H^{1-\delta}})\<\tilde{u}_{m_j} \cdot \nabla \Pi_{m_j} \phi, \tilde{u}_{m_j}\> \, ds-\int_{0}^{t}F_N(\|\tilde{u}\|_{H^{1-\delta}})\<\tilde{u} \cdot \nabla \phi, \tilde{u}\> \, ds\bigg|\bigg]\\
			&\leq \tilde{\E} \bigg[\int_{0}^{T} F_N(\|\tilde{u}_{m_j}\|_{H^{1-\delta}}) \big|\<\tilde{u}_{m_j} \cdot \nabla \Pi_{m_j} \phi, \tilde{u}_{m_j}\> -\<\tilde{u}\cdot \nabla \phi, \tilde{u}\>\big| \, ds\bigg]\\
			&\quad +\tilde{\E} \bigg[\int_{0}^{T} \big|F_N(\|\tilde{u}_{m_j}\|_{H^{1-\delta}}) -F_N(\|\tilde{u}\|_{H^{1-\delta}})\big| \, \big|\<\tilde{u} \cdot \nabla \phi, \tilde{u}\> \big| \, ds\bigg]\\
			&=:J_1+J_2.
		\end{split}
	\end{equation}
	We start with the term $J_1$. By the definition of cut-off function, we have $F_N(\|\tilde{u}_{m_j}\|_{H^{1-\delta}})\leq 1$, and therefore it is enough to consider the quantity
	\begin{equation}\label{nonlinear decomposition}
		\begin{split}
			& \quad \ \big|\big\<\tilde{u}_{m_j} \cdot \nabla \Pi_{m_j} \phi, \tilde{u}_{m_j}\big\> -\<\tilde{u} \cdot \nabla \phi, \tilde{u}\>\big|\\
			&\leq \big|\big\<\tilde{u}_{m_j} \cdot \nabla \Pi_{m_j} \phi, \tilde{u}_{m_j}-\tilde{u}\big\>\big|+\big|\big\<(\tilde{u}_{m_j}-\tilde{u})\cdot \nabla \Pi_{m_j} \phi, \tilde{u}\big\>\big|+\big|\big\<\tilde{u} \cdot \nabla (\Pi_{m_j} \phi- \phi),\tilde{u}\big\>\big|.
		\end{split}
	\end{equation}
	We will estimate each term of \eqref{nonlinear decomposition} respectively. For $\gamma \in (0,\frac{1}{2})$, interpolation inequality yields
	$$\aligned
	\big|\big\<\tilde{u}_{m_j} \cdot \nabla \Pi_{m_j} \phi, \tilde{u}_{m_j}-\tilde{u}\big\>\big|& \leq \|\tilde{u}_{m_j} \cdot \nabla \Pi_{m_j} \phi\|_{H^{\gamma}} \|\tilde{u}_{m_j}-\tilde{u}\|_{H^{-\gamma}} \\
	&\lesssim \|\tilde{u}_{m_j}\|_{H^{\frac{1}{2}}} \|\nabla \Pi_{m_j}\phi\|_{H^{1+\gamma}} \|\tilde{u}_{m_j}-\tilde{u}\|_{H^{-\gamma}}\\
	&\leq  \|\tilde{u}_{m_j}\|_{H^1}^{\frac{1}{2}}  \|\tilde{u}_{m_j}\|_{L^2}^{\frac{1}{2}} \|\phi\|_{H^{3}} \|\tilde{u}_{m_j}-\tilde{u}\|_{H^{-\gamma}};
	\endaligned $$
	recalling Lemma \ref{finite energy estimate}, we can use H\"older's inequality to obtain
	$$\aligned
	\int_{0}^{T} \big|\big\<\tilde{u}_{m_j} \cdot \nabla \Pi_{m_j} \phi, \tilde{u}_{m_j}-\tilde{u}\big\>\big| \, ds &\lesssim \|u_0\|_{L^2}^{\frac{1}{2}} \, \|\phi\|_{H^{3}} \Big(\int_{0}^{T} \|\tilde{u}_{m_j}\|_{H^1}^2 \, ds\Big)^{\frac{1}{4}} \Big(\int_{0}^{T} \|\tilde{u}_{m_j}-\tilde{u}\|_{H^{-\gamma}}^{\frac{4}{3}} \, ds\Big)^{\frac{3}{4}}\\
	&\lesssim \|u_0\|_{L^2} \,  \|\phi\|_{H^{3}} \Big(\int_{0}^{T} \|\tilde{u}_{m_j}-\tilde{u}\|_{H^{-\gamma}}^{\frac{4}{3}} \, ds\Big)^{\frac{3}{4}},
	\endaligned $$
	which vanishes as $j\rightarrow \infty$ by the condition that $\tilde{u}_{m_j} $ converges to $\tilde{u}$ in $C([0,T];H^{-\gamma})$. Then we turn to the second term of \eqref{nonlinear decomposition}. For $\gamma \in (0,\frac{1}{2})$, we have
	$$\aligned
	\big|\big\<(\tilde{u}_{m_j}-\tilde{u})\cdot \nabla \Pi_{m_j} \phi, \tilde{u}\big\>\big| &\leq \|(\tilde{u}_{m_j}-\tilde{u})\cdot \nabla \Pi_{m_j} \phi\|_{H^{-1}} \|\tilde{u}\|_{H^{1}}\\
	&\lesssim \|\tilde{u}_{m_j}-\tilde{u}\|_{H^{-\gamma}} \|\nabla \Pi_{m_j} \phi\|_{H^{\gamma+\frac{1}{2}}}  \|\tilde{u}\|_{H^1} \\
	&\lesssim \|\tilde{u}_{m_j}-\tilde{u}\|_{H^{-\gamma}} \|\tilde{u}\|_{H^1} \|\phi\|_{H^{2}},
	\endaligned $$
	and therefore we can apply Lemma \ref{u trajectory} and Cauchy-Schwarz inequality to obtain
	$$\int_{0}^{T} \big|\big\<(\tilde{u}_{m_j}-\tilde{u})\cdot \nabla \Pi_{m_j} \phi, \tilde{u}\big\>\big| \, ds \rightarrow 0, \quad j\rightarrow \infty.$$
	For the last term of \eqref{nonlinear decomposition}, by Lemma \ref{u trajectory}, the following estimate holds:
	$$\aligned
	\big|\big\<\tilde{u} \cdot \nabla (\Pi_{m_j} \phi- \phi),\tilde{u}\big\>\big| \leq \|\tilde{u}\|_{L^2}^2 \|\nabla (\Pi_{m_j} \phi- \phi)\|_{L^\infty}\lesssim \|u_0\|_{L^2}^2 \, \|\Pi_{m_j} \phi- \phi\|_{H^{3}}.
	\endaligned $$
	Since $\phi$ is smooth, $\Pi_{m_j} \phi$ converges to $\phi$ in $H^{3}$. Taking all the discussions into consideration, we have proved that $J_1$ tends to $0$ as $j\rightarrow \infty$.
	
	Then let us turn to the term $J_2$. By Sobolev embedding theorem and Lemma \ref{u trajectory}, it holds
	$$|\<\tilde{u} \cdot \nabla \phi,\tilde{u}\>| \leq \|\tilde{u} \cdot \nabla \phi\|_{L^2} \|\tilde{u}\|_{L^2} \leq \|\tilde{u}\|_{L^2}^2 \|\nabla\phi\|_{L^\infty} \lesssim \|u_0\|_{L^2}^2 \|\phi\|_{H^{3}}.$$
	Besides, by Lemma \ref{cut-off function estimate} below and interpolation inequality, we have
	\begin{equation}\label{J2 estimate}
		\begin{split}
			\big|F_N(\|\tilde{u}_{m_j}\|_{H^{1-\delta}})-F_N(\|\tilde{u}\|_{H^{1-\delta}})\big| \lesssim_N \|\tilde{u}_{m_j}-\tilde{u}\|_{H^{1-\delta}} .
		\end{split}
	\end{equation}
	By the fact that $\tilde{u}_{m_j}$ converges to $\tilde{u}$ $\P$-\text{a.s.} in $L^2([0,T];H^{1-\delta})$, we can use H\"older's inequality to prove that $J_2$ vanishes as $j\rightarrow \infty$, thus we get the convergence for the nonlinear term.
	
	Finally, it remains to show the convergence of the stochastic term. We will follow the proof of \cite[Theorem 2.2]{FGL21 JEE} to give a sketch. Fix any $M\in \mathbb{N}$ and make the following decomposition (we omit the unimportant parameter $\sqrt{3\nu/2}$ for the simplicity of notation):
	$$\aligned
	&\quad \ \tilde{\E} \bigg[\sup_{t\in [0,T]} \bigg|\sum_{k,i} \theta_k\int_0^t \big\<\tilde{u}_{m_j}(s), \sigma_{k,i}\cdot \nabla \Pi_{m_j} \phi\big\> \, d\tilde{W}_s^{m_j,k,i}-\sum_{k,i} \theta_k\int_0^t \big\<\tilde{u}(s), \sigma_{k,i}\cdot \nabla \phi\big\> \, d\tilde{W}_s^{k,i}\bigg|\bigg]\\
	&\leq \tilde{\E} \bigg[\sup_{t\in [0,T]} \bigg|\sum_{|k|>M,i} \theta_k\int_0^t \big\<\tilde{u}_{m_j}(s), \sigma_{k,i}\cdot \nabla \Pi_{m_j} \phi\big\> \, d\tilde{W}_s^{m_j,k,i}\bigg|\bigg]\\
	&\  +\tilde{\E} \bigg[\sup_{t\in [0,T]} \bigg|\sum_{|k|>M,i} \theta_k\int_0^t \big\<\tilde{u}(s), \sigma_{k,i}\cdot \nabla \phi\big\> \, d\tilde{W}_s^{k,i}\bigg|\bigg]\\
	&\  +\tilde{\E} \bigg[\sup_{t\in [0,T]} \bigg|\sum_{|k|\leq M,i} \theta_k\bigg(\int_0^t \big\<\tilde{u}_{m_j}(s), \sigma_{k,i}\cdot \nabla \Pi_{m_j} \phi\big\> \, d\tilde{W}_s^{m_j,k,i}-\int_0^t \big\<\tilde{u}(s), \sigma_{k,i}\cdot \nabla \phi\big\> \, d\tilde{W}_s^{k,i}\bigg)\bigg|\bigg].
	\endaligned $$
	Denote $\|\theta\|_{\ell^\infty_{>M}} =\sup_{|k|>M} |\theta_k|$, which vanishes as $M\rightarrow \infty$. By Burkholder-Davis-Gundy inequality, we have
	$$\aligned
	&\quad \ \tilde{\E} \bigg[\sup_{t\in [0,T]} \bigg|\sum_{|k|>M,i} \theta_k\int_0^t \big\<\tilde{u}_{m_j}(s), \sigma_{k,i}\cdot \nabla \Pi_{m_j} \phi \big\> \, d\tilde{W}_s^{m_j,k,i}\bigg|\bigg]\\
	& \lesssim \tilde{\E} \bigg[ \bigg(\sum_{|k|>M,i} \theta_k^2 \int_0^T \big\<\tilde{u}_{m_j}(s), \sigma_{k,i}\cdot \nabla \Pi_{m_j} \phi \big\>^2 \, ds \bigg)^\frac{1}{2}\bigg]\\
	&\lesssim \|\theta\|_{\ell^\infty_{>M}} \tilde{\E} \bigg[ \bigg(\sum_{|k|>M,i}  \int_0^T \big\<\tilde{u}_{m_j}(s) \cdot \nabla \Pi_{m_j} \phi, \sigma_{k,i}  \big\>^2 \, ds \bigg)^\frac{1}{2}\bigg]\\
	&\lesssim T^{1/2} \|\theta\|_{\ell^\infty_{>M}}  \|u_0\|_{L^2} \|\nabla \phi\|_{L^\infty} \rightarrow 0, \quad M\rightarrow \infty.
	\endaligned $$
	Similar calculations hold for the second term by applying Lemma \ref{u trajectory}. Then we turn to the last term. By point (2) above  formula \eqref{existence approx}, we have $\tilde{\P}$-a.s., $\<\tilde{u}_{m_j}(s), \sigma_{k,i}\cdot \nabla \Pi_{m_j} \phi\> \rightarrow \<\tilde{u}(s), \sigma_{k,i}\cdot \nabla \phi \> $ and $\tilde{W}_s^{m_j,k,i} \rightarrow \tilde{W}_s^{k,i}$ for all $s \in [0,T]$. Hence by  \cite[Lemma 3.2]{Luo11 SPA}, we only need to verify that, for any $|k|\leq M$, it holds
	$$\bigg[\tilde{\E} \int_{0}^{T} \big\<\tilde{u}(s), \sigma_{k,i}\cdot \nabla \phi \big\>^4 \, ds \bigg] \vee \bigg[\sup_{j \geq 1} \tilde{\E} \int_{0}^{T} \big\<\tilde{u}_{m_j}(s), \sigma_{k,i}\cdot \nabla \Pi_{m_j} \phi\big\>^4 \, ds\bigg]<+\infty,$$
	which is a direct conclusion of (similar estimates can be applied for the other part)
	$$\big|\big\<\tilde{u}(s), \sigma_{k,i}\cdot \nabla \phi \big\>\big| \leq \|\tilde{u}(s)\|_{L^2} \|\sigma_{k,i} \cdot \nabla \phi\|_{L^2} \lesssim \|u_0\|_{L^2} \|\nabla\phi\|_{L^\infty}.$$
	Hence we proved the convergence of stochastic term if we let $j \rightarrow \infty$ and then $M\rightarrow \infty$. Thanks to the above all discussions, letting $j\rightarrow \infty$ in \eqref{existence approx}, it holds $\tilde{\P}$-a.s. for all $t\in [0,T]$,
	$$\aligned
	\<\tilde{u}(t),\phi\>&=\< u_0, \phi\>+ \int_{0}^{t} F_N(\|\tilde{u}(s)\|_{H^{1-\delta}})\big\<\tilde{u}(s) \cdot \nabla  \phi, \tilde{u}(s)\big\> \, ds+ \int_{0}^{t} \<\tilde{u}(s),\Delta \phi \> \, ds\\
	&\quad +\int_{0}^{t} \big\<\tilde{u}(s),S_\theta(\phi) \big\> \, ds-\sqrt{\frac{3\nu}{2}}\sum_{k,i} \theta_k \int_{0}^{t} \big\<\tilde{u}(s),\sigma_{k,i} \cdot \nabla \phi\big\> \, d\tilde{W}^{k,i}_s,
	\endaligned $$
	and we obtain the existence of the weak solution to equation \eqref{Ito form} with $\Lambda=1$.
	
	\begin{remark}\label{rmk on existence}
		Our approach to proving existence does not cover the case where $\Lambda=1$ and $\delta=0$. Recall that we have proved in Corollary \ref{tight} that the law of $\{u_m\}_m$ is tight on $L^2([0,T];H^{1-\delta})$ for $\delta \in (0,\frac{1}{4})$. As $\delta=0$, formula \eqref{J2 estimate} reads as
		$$	\big|F_N(\|\tilde{u}_{m_j}\|_{H^{1}})-F_N(\|\tilde{u}\|_{H^{1}})\big| \lesssim_N \|\tilde{u}_{m_j}-\tilde{u}\|_{H^{1}},$$
		hence $J_2$ does not vanish as $j \rightarrow \infty$ and therefore we cannot establish the convergence for the nonlinear term.
		This is why we assume that $\delta \in (0,\frac{1}{4})$ as $\Lambda=1$.
	\end{remark}
	
	\subsection{Pathwise uniqueness} \label{subsec-unique}
	
	In this section, we follow \cite{MR09} to prove the pathwise uniqueness of weak solution to \eqref{Ito form} with $\Lambda=1$. To begin with, we give a crucial lemma, which is similar to \cite[Lemma 2.1]{MR09}.
	
	\begin{lemma}\label{cut-off function estimate}
		Fix $\delta \in [0,1/4)$, then for every $u,  v \in H^{1-\delta}$ and each $N>0$, it holds
		\begin{equation}\label{estimate on F_N}
			\big|F_N(\|u\|_{H^{1-\delta}})-F_N(\|v\|_{H^{1-\delta}})\big| \leq \frac{1}{N} F_N(\|u\|_{H^{1-\delta}}) F_N(\|v\|_{H^{1-\delta}}) \|u-v\|_{H^{1-\delta}}.
		\end{equation}
	\end{lemma}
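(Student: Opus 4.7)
The plan is a short case analysis based on which branch of $F_N(r) = \min\{1, N/r\}$ is active at $\|u\|_{H^{1-\delta}}$ and $\|v\|_{H^{1-\delta}}$. Write $a = \|u\|_{H^{1-\delta}}$, $b = \|v\|_{H^{1-\delta}}$, and by symmetry assume $a \leq b$. The only tool needed besides arithmetic is the reverse triangle inequality $|a - b| \leq \|u - v\|_{H^{1-\delta}}$.

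First I would dispose of the easy cases. If $b \leq N$, then both $F_N(a) = F_N(b) = 1$ and the inequality is trivial. If $N < a \leq b$, then $F_N(a) = N/a$ and $F_N(b) = N/b$, so
\[
|F_N(a) - F_N(b)| = \frac{N(b-a)}{ab} = \frac{1}{N}\cdot \frac{N}{a}\cdot \frac{N}{b}\cdot |a-b| \leq \frac{1}{N} F_N(a) F_N(b)\, \|u-v\|_{H^{1-\delta}},
\]
where the last step uses the reverse triangle inequality. This is actually an equality up to the bound on $|a-b|$.

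The only case requiring a small observation is the mixed case $a \leq N < b$, where $F_N(a) = 1$ and $F_N(b) = N/b$. Here
\[
|F_N(a) - F_N(b)| = 1 - \frac{N}{b} = \frac{b - N}{b} \leq \frac{b - a}{b},
\]
using $a \leq N$ in the numerator. On the other hand, the right-hand side of \eqref{estimate on F_N} equals $\frac{1}{N}\cdot 1 \cdot \frac{N}{b} \cdot \|u-v\|_{H^{1-\delta}} = \|u-v\|_{H^{1-\delta}}/b$, and the reverse triangle inequality gives $b - a \leq \|u-v\|_{H^{1-\delta}}$, closing the estimate. I do not anticipate any genuine obstacle here; the only point worth being careful about is making sure the bound $b - a \leq \|u-v\|_{H^{1-\delta}}$ (rather than $|b - N|$) is what naturally appears on the right, which is precisely why the cut-off value $N$ enters the numerator of $F_N$ on the right-hand side of \eqref{estimate on F_N}.
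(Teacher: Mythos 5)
Your proof is correct and follows essentially the same three-case analysis as the paper (both norms below $N$, both above, and the mixed case resolved via $b-N\le b-a\le\|u-v\|_{H^{1-\delta}}$). No issues.
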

	\begin{proof}
		Fixing an arbitrary $N>0$, we discuss in the following three cases.
		
		\noindent\emph{Case 1.} $\|u\|_{H^{1-\delta}} \leq N$ and $\|v\|_{H^{1-\delta}} \leq N$. Then $F_N(\|u\|_{H^{1-\delta}})=F_N(\|v\|_{H^{1-\delta}})=1$, and the conclusion is trivial.
		
		\noindent\emph{Case 2.} $\|u\|_{H^{1-\delta}} > N$ and $\|v\|_{H^{1-\delta}} > N$. Now we have
		$$\aligned \big|F_N(\|u\|_{H^{1-\delta}})-F_N(\|v\|_{H^{1-\delta}})\big|&=\frac{N\big|\|v\|_{H^{1-\delta}}-\|u\|_{H^{1-\delta}}\big|}{\|u\|_{H^{1-\delta}}\|v\|_{H^{1-\delta}}}
		\leq \frac{N \|u-v\|_{H^{1-\delta}}}{\|u\|_{H^{1-\delta}}\|v\|_{H^{1-\delta}}}\\
		&=\frac{1}{N} F_N(\|u\|_{H^{1-\delta}}) F_N(\|v\|_{H^{1-\delta}}) \|u-v\|_{H^{1-\delta}}.
		\endaligned $$
		
		\noindent\emph{Case 3.} $\|u\|_{H^{1-\delta}} \leq N$ and $\|v\|_{H^{1-\delta}} > N$. By the definition of cut-off function, it holds
		$$\aligned
		\big|F_N(\|u\|_{H^{1-\delta}})-F_N(\|v\|_{H^{1-\delta}})\big|&=\frac{\|v\|_{H^{1-\delta}}-N}{\|v\|_{H^{1-\delta}}}
		\leq \frac{\|v\|_{H^{1-\delta}}-\|u\|_{H^{1-\delta}}}{\|v\|_{H^{1-\delta}}} \\
		&\leq \frac{\|u-v\|_{H^{1-\delta}}}{\|v\|_{H^{1-\delta}}}=\frac{1}{N} F_N(\|u\|_{H^{1-\delta}}) F_N(\|v\|_{H^{1-\delta}}) \|u-v\|_{H^{1-\delta}}.
		\endaligned $$
		Since $u$ and $v$ are symmetric, the case for $\|v\|_{H^{1-\delta}} \leq N$ and $\|u\|_{H^{1-\delta}} > N$ is exactly the same as Case 3, so we omit the duplicate proof.
	\end{proof}

	Next, we present another estimate which will be used in the sequel.
	\begin{lemma}\label{Buvw estimate}
		For any $u,v,w \in H^1$ and $\delta \in [0,1/4)$, we have
		$$\big|\big\< w,  \Pi(u\cdot \nabla v)\big\>\big| \lesssim \|v\|_{H^{1-\delta}} \|u\|_{H^{1-\delta}} \|w\|_{H^1}^{2\delta+\frac{1}{2}} \|w\|_{L^2}^{\frac{1}{2}-2\delta}.$$
	\end{lemma}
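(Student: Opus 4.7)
The plan is to reduce the trilinear estimate to a product estimate plus interpolation, using divergence-freeness to shift derivatives away from the factor that is only in $H^{1-\delta}$. Since $w \in H^1$ is divergence-free, the Leray projection is self-adjoint on $w$, so $\langle w, \Pi(u\cdot \nabla v)\rangle = \langle w, u\cdot \nabla v\rangle$. Because $u$ is also divergence-free, an integration by parts gives
\begin{equation*}
\big\langle w, \Pi(u\cdot \nabla v)\big\rangle = -\big\langle v, u\cdot \nabla w\big\rangle = -\big\langle uv, \nabla w\big\rangle,
\end{equation*}
which moves all three factors into a form where only one derivative appears and it falls on $w$.

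Next I would estimate this by duality: for any admissible $a$,
\begin{equation*}
\big|\langle uv, \nabla w\rangle\big| \leq \|uv\|_{H^{a}}\, \|\nabla w\|_{H^{-a}}.
\end{equation*}
The key choice is $a = \tfrac12 - 2\delta$. With this choice, Lemma \ref{HHH} applied with $s_1 = s_2 = 1-\delta$ (which satisfy $s_1, s_2 < 3/2$ and $s_1+s_2 = 2-2\delta > 0$ for $\delta \in [0,1/4)$, and give $s_1+s_2-\tfrac{3}{2} = \tfrac12 - 2\delta$) yields
\begin{equation*}
\|uv\|_{H^{1/2-2\delta}} \lesssim \|u\|_{H^{1-\delta}}\, \|v\|_{H^{1-\delta}}.
\end{equation*}
On the other hand, $\|\nabla w\|_{H^{2\delta-1/2}} \leq \|w\|_{H^{2\delta+1/2}}$, and since $2\delta+\tfrac12 \in [\tfrac12, 1)$ for $\delta \in [0,1/4)$, Lemma \ref{interpolation} (with $s_1 = 0$, $s_2 = 1$, $\tau = \tfrac12 - 2\delta$) gives
\begin{equation*}
\|w\|_{H^{2\delta+1/2}} \leq \|w\|_{L^2}^{1/2-2\delta}\, \|w\|_{H^1}^{2\delta+1/2}.
\end{equation*}
Combining the three displays produces exactly the claimed inequality.

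The only subtle point is ensuring the indices of Lemma \ref{HHH} lie in the admissible range, and this is precisely where the hypothesis $\delta < 1/4$ is used: it guarantees both $1-\delta > 3/4$ (so $H^{1-\delta}$ is a multiplicative algebra at the required level) and $2\delta+1/2 < 1$ (so the interpolation on $w$ is nontrivial). No other step is delicate; the proof is essentially a bookkeeping exercise once the integration by parts and the duality pairing $a = \tfrac12-2\delta$ have been chosen.
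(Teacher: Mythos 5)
Your proof is correct and follows essentially the same route as the paper: integrate by parts using the divergence-free structure to put the derivative on $w$, apply the product estimate of Lemma \ref{HHH}, and interpolate $\|w\|_{H^{2\delta+1/2}}$ between $L^2$ and $H^1$. The only (immaterial) difference is the grouping in the duality step — you bound $\|u\otimes v\|_{H^{1/2-2\delta}}$ by pairing $u$ with $v$, whereas the paper pairs $v$ against $u\cdot\nabla w$ and bounds $\|u\cdot\nabla w\|_{H^{\delta-1}}$; both reduce to the same instance of interpolation on $w$ and the same admissibility conditions on $\delta$.
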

	
	\begin{proof}
		By the divergence-free property of $u$ and $w$, we use \cite[Corollary 2.55]{HB Fourier analysis} and interpolation inequality to get
		$$\aligned
		\big|\big\< w,  \Pi(u\cdot \nabla v)\big\>\big|&=|\<w, u\cdot \nabla v\>| =|\<v, u\cdot \nabla w\>|\\
		&\leq \|v\|_{H^{1-\delta}} \|u\cdot\nabla w\|_{H^{\delta-1}}\\
		&\lesssim \|v\|_{H^{1-\delta}}  \|u\|_{H^{1-\delta}} \|\nabla w\|_{H^{2\delta-\frac{1}{2}}} \\
		&\lesssim \|v\|_{H^{1-\delta}} \|u\|_{H^{1-\delta}} \|w\|_{H^1}^{2\delta+\frac{1}{2}} \|w\|_{L^2}^{\frac{1}{2}-2\delta},
		\endaligned $$
		which implies the conclusion of this lemma.
	\end{proof}
	
	Based on the above preparations, now we can provide the proof of pathwise uniqueness. Suppose $u$, $v$ are both the solutions of \eqref{Ito form} with the same initial value $u_0$ and denote $w:=u-v$. We aim to prove that $w(t)\equiv 0$ for any $t\in [0,T]$, so we will make energy estimate for $w$. To apply It\^o formula, we need to verify the condition in \cite[Theorem 2.13]{RL 1990}, i.e.,
	\begin{equation}\label{ito verification}
		\int_{0}^{T}  \|w\|_{H^1}^2+\|\partial_t w \|_{H^{-1}}^2 dt<+\infty, \quad \P \text{-a.s.}.
	\end{equation}
	The first part is trivial due to the fact that $u, v \in  L^2([0,T];H^1)$. For the second part, we only discuss the nonlinear term, i.e., we will prove that
	$$\int_{0}^{T} \big\| F_N(\|u\|_{H^{1-\delta}}) \Pi(u\cdot \nabla u) -F_N(\|v\|_{H^{1-\delta}}) \Pi(v\cdot \nabla v) \big\|_{H^{-1}}^2 \, dt < +\infty.$$
	Since $u$ is divergence-free, for $\delta \in (0,\frac{1}{4})$, by H\"older's inequality, Sobolev embedding theorem and interpolation inequality, it holds
	$$\big\|\Pi(u\cdot \nabla u) \big\|_{H^{-1}} \lesssim \|u^2\|_{L^2} \leq \|u\|_{L^4}^2 \lesssim \|u\|_{H^{\frac{3}{4}}}^2 \leq  \|u\|_{H^{1-\delta}}^{2}.$$
	Furthermore, by the definition of $F_N(\|u\|_{H^{1-\delta}})$, we have $F_N(\|u\|_{H^{1-\delta}}) \|u\|_{H^{1-\delta}} \leq N$, hence
	\begin{equation}\nonumber
		\begin{split}
			\int_{0}^{T} F_N^2(\|u\|_{H^{1-\delta}}) \big\|\Pi(u\cdot \nabla u) \big\|_{H^{-1}}^2 \, dt\lesssim \int_{0}^{T} F_N^2(\|u\|_{H^{1-\delta}})  \|u\|_{H^{1-\delta}}^4 \, dt\lesssim  N^2 \|u_0\|_{L^2}^2< +\infty.
		\end{split}
	\end{equation}
	Similar proofs hold for the other term associated to $v$. As \eqref{ito verification} has been verified, we can apply It\^o formula for $w$ to arrive at
	\begin{equation}\label{energy estimate}
		\begin{split}
			d\|w\|_{L^2}^2&=2\<w,dw\>+\<dw,dw\>\\
			&=-2\big\<w, F_N(\|u\|_{H^{1-\delta}}) \Pi(u\cdot \nabla u) -F_N(\|v\|_{H^{1-\delta}}) \Pi(v\cdot \nabla v) \big\> dt +2 \<w, \Delta w\>dt\\
			&\ +2\sqrt{\frac{3\nu}{2}} \sum_{k,i} \theta_k \big\<w,  \Pi(\sigma_{k,i} \cdot \nabla w) \big\> dW_t^{k,i} + 2\<w ,S_\theta(w)\>dt+3\nu \sum_{k,i} \theta_k^2 \big\|\Pi(\sigma_{k,i} \cdot \nabla w)\big\|_{L^2}^2 \, dt\\
			&=-2\big\<w, F_N(\|u\|_{H^{1-\delta}}) \Pi(u\cdot \nabla u) -F_N(\|v\|_{H^{1-\delta}}) \Pi(v\cdot \nabla v) \big\> dt -2 \|\nabla w\|_{L^2}^2 \, dt,
		\end{split}
	\end{equation}
	where in the last step we used the facts that $\text{div} \, \sigma_{k,i}=0$ and
	$$\<w,S_\theta(w)\>=-\frac{3\nu}{2}\sum_{k,i} \theta_k^2  \big\|\Pi(\sigma_{k,i} \cdot \nabla w)\big\|_{L^2}^2.$$
	Now we focus on the nonlinear term of \eqref{energy estimate}. By the triangle inequality, it holds
	$$\aligned
	&\quad \  F_N(\|u\|_{H^{1-\delta}}) \Pi(u\cdot \nabla u) -F_N(\|v\|_{H^{1-\delta}}) \Pi(v\cdot \nabla v)\\
	&=F_N(\|u\|_{H^{1-\delta}})  \Pi(w\cdot \nabla u) + \big[F_N(\|u\|_{H^{1-\delta}})-F_N(\|v\|_{H^{1-\delta}})\big]  \Pi(v \cdot \nabla u)+F_N(\|v\|_{H^{1-\delta}}) \Pi( v\cdot \nabla w).
	\endaligned $$
	For the first term, Lemma \ref{Buvw estimate} and interpolation inequality yield
	$$\aligned
	\big|\big\<w, F_N(\|u\|_{H^{1-\delta}})  \Pi(w\cdot \nabla u) \big\> \big|&\lesssim  F_N(\|u\|_{H^{1-\delta}}) \|u\|_{H^{1-\delta}} \|w\|_{H^{1-\delta}} \|w\|_{H^1}^{2\delta+\frac{1}{2}} \|w\|_{L^2}^{\frac{1}{2}-2\delta} \\
	&\lesssim N \|w\|_{H^1}^{1-\delta} \|w\|_{L^2}^\delta \|w\|_{H^1}^{2\delta+\frac{1}{2}} \|w\|_{L^2}^{\frac{1}{2}-2\delta}\\
	&=N \|w\|_{H^1}^{\delta+\frac{3}{2}} \|w\|_{L^2}^{\frac{1}{2}-\delta}.
	\endaligned $$
	Similarly, we have the same estimate for the third term:
	$$\big|\big\<w,F_N(\|v\|_{H^{1-\delta}}) \Pi( v\cdot \nabla w) \big\>\big| \lesssim N \|w\|_{H^1}^{\delta+\frac{3}{2}} \|w\|_{L^2}^{\frac{1}{2}-\delta}.$$
	For the other term, we apply  Lemmas \ref{cut-off function estimate}, \ref{Buvw estimate} and interpolation inequality to obtain
	$$\aligned
	&\quad \big|\big\<w, \big[F_N(\|u\|_{H^{1-\delta}})-F_N(\|v\|_{H^{1-\delta}})\big]  \Pi(v \cdot \nabla u) \big\>\big| \\
	&\lesssim \big|F_N(\|u\|_{H^{1-\delta}})-F_N(\|v\|_{H^{1-\delta}})\big| \, \|u\|_{H^{1-\delta}} \|v\|_{H^{1-\delta}}  \|w\|_{H^1}^{2\delta+\frac{1}{2}} \|w\|_{L^2}^{\frac{1}{2}-2\delta}\\
	&\lesssim  N \|w\|_{H^{1}}^{1-\delta} \|w\|_{L^2}^\delta \|w\|_{H^1}^{2\delta+\frac{1}{2}} \|w\|_{L^2}^{\frac{1}{2}-2\delta}\\
	&=N \|w\|_{H^1}^{\delta+\frac{3}{2}} \|w\|_{L^2}^{\frac{1}{2}-\delta}.
	\endaligned $$
	Combining the above estimates, we use Young's inequality to get
	\begin{equation}\label{nonlinear difference estimate}
		\begin{aligned}
			\big|\big\<w,F_N(\|u\|_{H^{1-\delta}})  \Pi(u\cdot \nabla u) -F_N(\|v\|_{H^{1-\delta}}) \Pi(v\cdot \nabla v) \big\>\big|
			\lesssim  \|w\|_{H^1}^2+C(\delta,N) \|w\|_{L^2}^2,
		\end{aligned}
	\end{equation}
	where $C(\delta,N)>0$ is a finite constant. Inserting the above estimate into \eqref{energy estimate} yields
	$$d \, \|w\|_{L^2}^2 \lesssim 2 C(\delta,N) \|w\|_{L^2}^2 \, dt.$$
	Applying Gr\"onwall's inequality, we have $w(t)\equiv 0$ for all $t\in[0,T]$, which implies  the pathwise uniqueness of the solution to equation \eqref{Ito form} with $\Lambda=1$.

	\section{Convergence}\label{sec-convergence}
	In Section \ref{subsec-scaling limit}, we establish the scaling limit result. For the sake of simplicity, we focus on the stochastic GMNSE in the proof, as the arguments for the hyperviscous stochastic GMNSE are similar. In Section \ref{subsec-quantitative conv}, we provide an explicit quantitative convergence rate in the $L^2([0,T];H^{1-\delta})$-norm. Again, we primarily consider the stochastic GMNSE for simplicity, while for the hyperviscous stochastic GMNSE, an additional constraint must be imposed. The details can be found in Remark \ref{rmk on quant conv rate}.

	\subsection{Scaling limit}\label{subsec-scaling limit}
	
	By Theorem \ref{well-posedness thm}, equation \eqref{sequence of thetaR} has a unique solution $u^n$ for each $n \geq 1$, such that for any divergence-free vector field $\phi \in C^\infty(\T^3,\R^3)$, it holds
	$$\aligned
	\<u^n(t),\phi\>&=\< u_0, \phi\>+ \int_{0}^{t} F_N(\|u^n(s)\|_{H^{1-\delta}}) \big\<u^n(s) \cdot \nabla  \phi, u^n(s) \big\> \, ds+ \int_{0}^{t} \<u^n(s),\Delta \phi \> \, ds\\
	&\quad +\int_{0}^{t} \big\<u^n(s),S_{\theta^n}(\phi) \big\> \, ds-\sqrt{\frac{3\nu}{2}}\sum_{k,i} \theta_k^n \int_{0}^{t} \big\<u^n(s),\sigma_{k,i} \cdot \nabla \phi\big\> \, dW^{k,i}_s.
	\endaligned $$
	If we denote $\eta^n$ as the law of $u^n$ and $Q^n$ as the joint law of $(u^n,W)$ for each $n\geq 1$, we can apply similar arguments as those in Section \ref{subsec-exist} to prove that $\{\eta^n\}_{n\geq 1}$ is tight on $\mathcal{X}$, and therefore
	$\{Q^n\}_{n\geq 1}$ is tight on $\mathcal{X} \times \mathcal{Y}$. Furthermore, there exists a subsequence $\{n_j\}_{j\geq 1}$, such that $Q^{n_j}$ converges weakly to some probability measure $Q$ on $\mathcal{X} \times \mathcal{Y}$. Also, we can find a new probability space $(\bar{\Omega},\bar{\mathcal{F}},\bar{\P})$, together with stochastic processes $\{(\bar{u}^{n_j},\bar{W}^{n_j})\}_{j\geq 1}$ and $(\bar{u},\bar{W})$ defined on $\bar \Omega$, satisfying\\
	(1') for any $j\geq 1$, $(\bar{u}^{n_j},\bar{W}^{n_j})$ has the same joint law as $(u^{n_j},W)$;\\
	(2') $\bar{\P}$-a.s., $(\bar{u}^{n_j},\bar{W}^{n_j})$ converges to $(\bar{u},\bar{W})$ in $\mathcal{X} \times \mathcal{Y}$ as $j \rightarrow \infty$.
	
	Now we can provide
	\begin{proof}[Proof of Theorem \ref{scaling limit}.]
		We first prove the uniqueness of the weak solution to the limit equation \eqref{limit eq} with $\Lambda=1$. Suppose $\bar{u}_1$ and $\bar{u}_2$ are two solutions of \eqref{limit eq} with the same initial value and denote $\xi:=\bar{u}_1-\bar{u}_2$, then it holds
		$$	\partial_t \xi+F_N(\|\bar{u}_1\|_{H^{1-\delta}}) \Pi (\bar{u}_1 \cdot \nabla \bar{u}_1)-F_N(\|\bar{u}_2\|_{H^{1-\delta}}) \Pi (\bar{u}_2 \cdot \nabla \bar{u}_2)=\Big(1+\frac{3\nu}{5}\Big)\Delta \xi.$$
		Thanks to the cut-off, we can make energy estimate for the above equation and get
		$$\aligned
		\frac{1}{2}\frac{d}{dt} \|\xi\|_{L^2}^2&=\big\<\xi, -F_N(\|\bar{u}_1\|_{H^{1-\delta}}) \Pi (\bar{u}_1 \cdot \nabla \bar{u}_1)+F_N(\|\bar{u}_2\|_{H^{1-\delta}}) \Pi (\bar{u}_2 \cdot \nabla \bar{u}_2)\big\>-\Big(1+\frac{3\nu}{5}\Big) \|\nabla \xi\|_{L^2}^2\\
		&\lesssim C(\delta,N,\nu) \|\xi\|_{L^2}^2,
		\endaligned $$
		where in the last step we utilized the calculations similar to \eqref{nonlinear difference estimate}  but we changed the weights when using Young's inequality. Applying Gr\"onwall's lemma, we get $\xi \equiv 0$. Meanwhile, we can easily show that $\bar{u} \in L^\infty([0,T];L^2)\cap L^2([0,T];H^1)$.
		
		Next, we prove that the limit process $\bar{u}$ mentioned in the point (2') above solves the deterministic 3D GMNSE \eqref{limit eq} with $\Lambda=1$ in the sense that for any divergence-free vector fields $\phi \in C^\infty(\T^3,\R^3)$, it holds
		$$\aligned
		\<\bar{u}(t), \phi\>-\<u_0, \phi\>=\int_{0}^{t} F_N(\|\bar{u}(s)\|_{H^{1-\delta}}) \big\<\bar{u}(s) \cdot \nabla\phi, \bar{u}(s) \big\> \, ds+\Big(1+\frac{3\nu}{5}\Big) \int_{0}^{t} \big\<\bar{u}(s), \Delta\phi \big\> \, ds.
		\endaligned $$
		Since $(\bar{u}^{n_j},\bar{W}^{n_j})$ has the same law as $(u^{n_j},W)$, hence it satisfies the following identity:
		$$\aligned
		\<\bar{u}^{n_j}(t),\phi\>&=\< u_0, \phi\>+ \int_{0}^{t} F_N(\|\bar{u}^{n_j}(s)\|_{H^{1-\delta}}) \big\<\bar{u}^{n_j}(s) \cdot \nabla  \phi, \bar{u}^{n_j}(s) \big\> \, ds+ \int_{0}^{t} \<\bar{u}^{n_j}(s),\Delta \phi \> \, ds\\
		&\quad +\int_{0}^{t} \big\<\bar{u}^{n_j}(s),S_{\theta^{n_j}}(\phi) \big\> \, ds-\sqrt{\frac{3\nu}{2}}\sum_{k,i} \theta_k^{n_j} \int_{0}^{t} \big\<\bar{u}^{n_j}(s),\sigma_{k,i} \cdot \nabla \phi \big\> \, dW^{n_j,k,i}_s.
		\endaligned $$
		The convergence for the third term on the right-hand side is trivial as $\phi$ is smooth. For the convergence of the nonlinear term, we can use similar proofs as those below Lemma \ref{u trajectory} to obtain, hence we omit the calculations here and we focus on the latter two terms. By the triangle inequality, it holds
		$$\Big|\big\<\bar{u}^{n_j}, S_{\theta^{n_j}}(\phi) \big\> -\big\<\bar{u}, \frac{3\nu}{5}\Delta \phi \big\> \Big| \leq \Big|\big\<\bar{u}^{n_j}-\bar{u},S_{\theta^{n_j}}(\phi) \big\>\Big|+\Big|\big\<\bar{u},S_{\theta^{n_j}}(\phi) -\frac{3\nu}{5}\Delta \phi \big\>\Big|.$$
		By Theorem \ref{S limit thm}, the following convergence holds in $L^2$ for $\phi \in C^\infty(\T^3,\R^3)$:
		$$\lim_{j\rightarrow \infty} S_{\theta^{n_j}}(\phi)=\frac{3\nu}{5} \Delta \phi,$$
		hence by the condition that $\bar{u} \in L^\infty ([0,T];L^2)$, the second term vanishes. Besides, notice that $\bar{u}^{n_j} \rightarrow \bar{u}$ in $L^2([0,T];H^{1-\delta})$, we can apply Cauchy-Schwarz inequality to prove that, $\P$-a.s.,
		$$\int_{0}^{t} \Big|\big\<\bar{u}^{n_j}(s)-\bar{u}(s),S_{\theta^{n_j}}(\phi) \big\>\Big| \, ds \leq T^\frac{1}{2} \|S_{\theta^{n_j}}(\phi) \|_{L^2}  \|\bar{u}^{n_j}-\bar{u}\|_{L^2([0,T];H^{1-\delta})} \rightarrow 0, \quad j \rightarrow \infty.$$
		Combining  the above arguments, we obtain the convergence for this term.
		
		Then we will show that the martingale part vanishes as $j\rightarrow \infty$. By It\^o isometry, it holds
		$$\aligned
		\bar{\E} \bigg(\sqrt{\frac{3\nu}{2}}\sum_{k,i} \theta_k^{n_j} \int_{0}^{t} \<\bar{u}^{n_j}(s),\sigma_{k,i} \cdot \nabla \phi\> \, dW^{n_j,k,i}_s\bigg)^{\! 2}
		&=\frac{3\nu}{2}\sum_{k,i} \big(\theta_k^{n_j}\big)^2 \bar{\E} \int_{0}^{t} \big|\<\bar{u}^{n_j}(s),\sigma_{k,i} \cdot \nabla \phi\>\big|^2 ds\\
		&\lesssim \nu \|\theta^{n_j}\|_{\ell^\infty}^2 \bar{\E} \int_{0}^{t} \sum_{k,i}  \big|\<\bar{u}^{n_j}(s)\cdot \nabla \phi, \sigma_{k,i} \>\big|^2 ds.
		\endaligned $$
		Furthermore, we have
		$$ \sum_{k,i}  \big|\<\bar{u}^{n_j}(s)\cdot \nabla \phi,\sigma_{k,i} \>\big|^2 \leq \big\| \bar{u}^{n_j}(s) \cdot \nabla \phi \big\|_{L^2}^2 \leq  \|\nabla \phi\|_{L^\infty}^2 \big\|\bar{u}^{n_j}(s)\big\|_{L^2}^2,$$
		which can be combined with the above estimate to yield
		$$\aligned
		\bar{\E} \bigg(\sqrt{\frac{3\nu}{2}}\sum_{k,i} \theta_k^{n_j} \int_{0}^{t} \big\<\bar{u}^{n_j}(s),\sigma_{k,i} \cdot \nabla \phi \big\> \, dW^{n_j,k,i}_s\bigg)^{\! 2}
		&\lesssim  \nu \|\theta^{n_j}\|_{\ell^\infty}^2 \|\nabla \phi\|_{L^\infty}^2  \bar{\E} \int_{0}^{t} \big\|\bar{u}^{n_j}(s) \big\|_{L^2}^2 \, ds\\
		&\lesssim \nu T \|\theta^{n_j}\|_{\ell^\infty}^2 \|\nabla \phi\|_{L^\infty}^2 \|u_0\|_{L^2}^2.
		\endaligned $$
		By the assumptions \eqref{assumption on theta} and $u_0 \in L^2$, we proved that this term tends to $0$ in the sense of mean square. Therefore, we have verified that $\bar{u}$ is the unique weak solution to the deterministic GMNSE \eqref{limit eq} with $\Lambda=1$.
		
		Finally, we turn to present the strong convergence estimate. Note that $\{u^n\}_{n\geq 1}$ is a sequence of variables defined on the same probability space and they converge to a deterministic limit, hence convergence in law is equivalent to convergence in probability. Since  we have proved that $\eta^n$, the law of $u^n$, converges to the Dirac measure $\delta_{\bar{u}_\cdot}$ on $\mathcal{X}$, it follows that for any $\epsilon>0$,
		$$\P \big\{\|u^n-\bar{u}\|_{\mathcal{X}} > \epsilon\big\} \rightarrow 0, \quad n \rightarrow \infty,$$
		where	$\| \cdot \|_{\mathcal{X}}$ is defined above Theorem \ref{scaling limit}. Furthermore,  if we apply similar arguments to the proofs of Lemma \ref{finite energy estimate}, we have
		$$\sup_{t\in [0,T]} \sup_{n\geq 1}  \bigg[\|u^n(t)\|_{L^2}^2+2 \int_0^t \|\nabla u^n(s)\|_{L^2}^2 \, ds\bigg] \leq \|u_0\|_{L^2}^2, \quad \P \text{-a.s.}.$$
		Hence for $p<+\infty$, the following moment estimate holds
		$$\aligned
		\E \Big[\|u^n-\bar{u}\|_{\mathcal{X}}^p\Big] &= \E \Big[\|u^n-\bar{u}\|_{\mathcal{X}}^p \, \textbf{1}_{\{\|u^n-\bar{u}\|_{\mathcal{X}}>\epsilon\}}\Big]+ \E \Big[\|u^n-\bar{u}\|_{\mathcal{X}}^p \, \textbf{1}_{\{\|u^n-\bar{u}\|_{\mathcal{X}}\leq \epsilon\}}\Big] \\
		&\lesssim \|u_0\|_{L^2}^p \, \P\big\{\|u^n-\bar{u}\|_{\mathcal{X}} > \epsilon\big\}+\epsilon^p.
		\endaligned $$
		Noticing that $\epsilon>0$ is arbitrary, the above estimate vanishes as $n \rightarrow \infty$.
	\end{proof}

	\subsection{Quantitative convergence rate}\label{subsec-quantitative conv}
	As mentioned at the beginning of Section \ref{sec-convergence}, we will mainly discuss the stochastic GMNSE in this section, i.e., we always take $\Lambda=1$ unless mentioned otherwise. First of all, we present the following result which is required in subsequent proofs.

	\begin{lemma}\label{CH1 bounded for baru}
		For deterministic 3D GMNSE, if the initial value $u_0 \in H^1$, then $\bar{u} \in L^\infty([0,T];H^1)$. More precisely, it holds $\sup_{t\in [0,T]} \|\bar{u}_t\|_{H^1} \lesssim_{\nu,\delta,N,T} \|u_0\|_{H^1}$.
	\end{lemma}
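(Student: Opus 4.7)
The plan is to obtain the $H^1$ bound via a standard energy estimate tested against $-\Delta\bar{u}$. Writing $\kappa := 1 + \frac{3\nu}{5}$ and using that $\Delta\bar{u}$ is divergence-free (so that the Leray projection drops out of the pairing with $\Pi(\bar{u}\cdot\nabla\bar{u})$), the limit equation \eqref{limit eq} with $\Lambda=1$ yields, at least formally,
\begin{equation*}
\frac{1}{2}\frac{d}{dt}\|\nabla\bar{u}\|_{L^2}^2 + \kappa\|\Delta\bar{u}\|_{L^2}^2 = F_N(\|\bar{u}\|_{H^{1-\delta}})\,\langle \bar{u}\cdot\nabla\bar{u},\,\Delta\bar{u}\rangle.
\end{equation*}
The core task is to absorb the right-hand side into the dissipation modulo a Gronwall-compatible remainder in $\|\nabla\bar{u}\|_{L^2}^2$.

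For the nonlinear term I would apply Lemma \ref{HHH} with $s_1 = 1-\delta$ and $s_2 = \frac{1}{2}+\delta$ (both in $(0,\frac{3}{2})$, summing to $\frac{3}{2}$) to obtain $\|\bar{u}\cdot\nabla\bar{u}\|_{L^2}\lesssim \|\bar{u}\|_{H^{1-\delta}}\|\bar{u}\|_{H^{3/2+\delta}}$, then invoke Lemma \ref{interpolation} to interpolate $\|\bar{u}\|_{H^{3/2+\delta}}\leq \|\bar{u}\|_{H^1}^{1/2-\delta}\|\bar{u}\|_{H^2}^{1/2+\delta}$. Coupling this with Cauchy-Schwarz against $\Delta\bar{u}$ and the cut-off estimate $F_N(\|\bar{u}\|_{H^{1-\delta}})\,\|\bar{u}\|_{H^{1-\delta}}\leq N$ gives
\begin{equation*}
F_N(\|\bar{u}\|_{H^{1-\delta}})\,|\langle \bar{u}\cdot\nabla\bar{u},\Delta\bar{u}\rangle| \lesssim N\,\|\nabla\bar{u}\|_{L^2}^{1/2-\delta}\,\|\Delta\bar{u}\|_{L^2}^{3/2+\delta}.
\end{equation*}
Since $\delta\in[0,\frac{1}{4})\subset[0,\frac{1}{2})$, the exponent $\frac{3}{2}+\delta$ on $\|\Delta\bar{u}\|_{L^2}$ is strictly less than $2$, so Young's inequality with the conjugate pair $p=\frac{4}{3+2\delta}$, $q=\frac{4}{1-2\delta}$ lets me absorb half of the dissipation $\kappa\|\Delta\bar{u}\|_{L^2}^2$ and bound the remainder by $C(\nu,\delta,N)\|\nabla\bar{u}\|_{L^2}^2$. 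Substituting back produces a differential inequality $\frac{d}{dt}\|\nabla\bar{u}\|_{L^2}^2 + \kappa\|\Delta\bar{u}\|_{L^2}^2 \leq C(\nu,\delta,N)\|\nabla\bar{u}\|_{L^2}^2$, and Gronwall's lemma gives $\sup_{t\in[0,T]}\|\bar{u}(t)\|_{H^1}^2\leq\|u_0\|_{H^1}^2\, e^{C(\nu,\delta,N)T}$, as claimed.

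Since $\bar{u}$ is only known \emph{a priori} to belong to $L^\infty([0,T];L^2)\cap L^2([0,T];H^1)$, using $-\Delta\bar{u}$ as a test function is not justified directly from Definition \ref{def of solutions}. I would make the argument rigorous by carrying out exactly the same computation on Galerkin approximations $\bar{u}_m=\Pi_m\bar{u}$ of the limit equation (for which all quantities are smooth in $t$ and the chain rule is a genuine ODE identity), extracting a uniform-in-$m$ $H^1$ bound, and then passing to the limit using the uniqueness of weak solutions to \eqref{limit eq} established in the proof of Theorem \ref{scaling limit}. The main obstacle is the nonlinear estimate: because the cut-off only controls the weaker $H^{1-\delta}$ norm while the test function $-\Delta\bar{u}$ forces a bound in terms of $\|\Delta\bar{u}\|_{L^2}$, one must split the derivatives via Lemma \ref{HHH} just right so that the resulting power of $\|\Delta\bar{u}\|_{L^2}$ stays subcritical and can be absorbed; this is precisely where the constraint $\delta<\frac{1}{2}$ (hence \emph{a fortiori} $\delta<\frac{1}{4}$) enters.
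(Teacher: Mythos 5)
Your proposal is correct and follows essentially the same route as the paper: an energy estimate tested against $-\Delta\bar{u}$, the bound $F_N(\|\bar{u}\|_{H^{1-\delta}})\,|\langle \bar{u}\cdot\nabla\bar{u},\Delta\bar{u}\rangle| \lesssim N\,\|\bar{u}\|_{H^1}^{1/2-\delta}\|\Delta\bar{u}\|_{L^2}^{3/2+\delta}$ obtained from the cut-off and interpolation, absorption by Young's inequality, and Gr\"onwall. The only differences are cosmetic (you invoke Lemma \ref{HHH} where the paper uses H\"older with explicit Lebesgue exponents plus Sobolev embedding, arriving at the identical bound) and your added remark on justifying the formal computation via Galerkin approximation, which the paper leaves implicit.
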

	
	\begin{proof}
		For equation \eqref{limit eq} with $\Lambda=1$, we make the following formal calculation:
		$$\aligned
		\frac{d}{dt} \|\bar{u}\|_{H^1}^2&=\frac{d}{dt} \big\|(-\Delta)^{\frac{1}{2}} \bar{u} \big\|_{L^2}^2=2\big\<(-\Delta)^{\frac{1}{2}} \bar{u},(-\Delta)^{\frac{1}{2}} \partial_t\bar{u}\big\>\\
		&=2 \, \Big\<-\Delta \bar{u}, -F_N(\|\bar{u}\|_{H^{1-\delta}}) \Pi(\bar{u}\cdot \nabla \bar{u})+\Big(1+\frac{3\nu }{5}\Big)\Delta \bar{u}\Big\>\\
		&=2F_N(\|\bar{u}\|_{H^{1-\delta}})  \<\Delta \bar{u}, \bar{u} \cdot\nabla \bar{u}\>-2\Big(1+\frac{3\nu }{5}\Big) \|\Delta \bar{u}\|_{L^2}^2.
		\endaligned $$
		By H\"older's inequality, Sobolev embedding and interpolation inequality, it holds
		$$\aligned
		\big|\<\Delta \bar{u}, \bar{u} \cdot\nabla \bar{u}\>\big| &\leq \|\Delta \bar{u}\|_{L^2} \|\bar{u}\|_{L^\frac{6}{1+2\delta}} \|\nabla \bar{u}\|_{L^\frac{3}{1-\delta}} \\
		&\lesssim \|\Delta \bar{u}\|_{L^2}  \|\bar{u}\|_{H^{1-\delta}} \|\nabla \bar{u}\|_{H^{\frac{1}{2}+ \delta}}\\
		&\lesssim \|\Delta \bar{u}\|_{L^2}  \|\bar{u}\|_{H^{1-\delta}} \|\bar{u}\|_{H^1} ^{\frac{1}{2}- \delta} \|\bar{u}\|_{H^2}^{\frac{1}{2}+ \delta}.
		\endaligned $$
		By the definition of cut-off function and Young's inequality, we have
		$$F_N(\|\bar{u}\|_{H^{1-\delta}}) \big| \<\Delta \bar{u}, \bar{u} \cdot\nabla \bar{u}\>\big| \lesssim N \|\Delta \bar{u}\|_{L^2}^{\frac{3}{2}+ \delta} \|\bar{u}\|_{H^1} ^{\frac{1}{2}- \delta} \leq \|\Delta \bar{u}\|_{L^2}^2+C(N, \delta) \|\bar{u}\|_{H^1} ^2,$$
		where $C(N, \delta)>0$ is a finite constant.
		Summarizing the above estimates, we arrive at
		$$\frac{d}{dt} \|\bar{u}\|_{H^1}^2 \leq 2C(N, \delta) \|\bar{u}\|_{H^1} ^2,$$
		then Gr\"onwall's inequality yields our desired estimate.
	\end{proof}

Now we can state the main result of this part.
	
	\begin{theorem}\label{quantitative convergence thm}
		Let $u^n$ and $\bar{u}$ be the solutions of equations \eqref{sequence of thetaR} and \eqref{limit eq} with the same initial value $u_0\in H^1$, respectively.  Given any $N>0$, we can find $\nu \gg 1$ satisfying
		$$ \nu^{-2} (1+N^2) \big(1+\|u_0\|_{H^1}^2\big) \ll 1,$$
		such that for any fixed $T<+\infty$ and $\delta \in (0,1/4)$, the following estimate holds:
		$$\E \bigg[	\int_{0}^{T} \|u_s^n-\bar{u}_s\|_{H^{1-\delta}}^2 \, ds\bigg] \lesssim 	\nu^2 n^{-2\delta} N^{-2} +\nu^{\frac{\delta}{2}+2} \delta^{\frac{\delta-5}{5}} T^{\frac{7\delta-\delta^2}{10}} N^{-2} \|\theta^n\|_{\ell^\infty}^{\frac{2\delta}{5}}.$$
	\end{theorem}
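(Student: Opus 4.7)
The plan is to work in the mild formulation. Since both $u^n$ and $\bar u$ start from the same $u_0 \in H^1$ and have diffusions $\Delta$ and $(1+\tfrac{3\nu}{5})\Delta$ respectively, I would introduce the common semigroup $P^\nu_t := e^{(1+3\nu/5)t\Delta}$ and, using the algebraic identity $S_{\theta^n}(u^n) - \tfrac{3\nu}{5}\Delta\bar u = \big[S_{\theta^n}(u^n) - \tfrac{3\nu}{5}\Delta u^n\big] + \tfrac{3\nu}{5}\Delta\rho^n$, rewrite $\rho^n := u^n - \bar u$ as
$$\rho^n(t) = -\int_0^t P^\nu_{t-s} \mathcal N_s \, ds + \int_0^t P^\nu_{t-s} \mathcal C_s \, ds + \mathcal M^n_t,$$
where $\mathcal N_s$ is the nonlinear difference $F_N(\|u^n\|_{H^{1-\delta}})\Pi(u^n\cdot\nabla u^n) - F_N(\|\bar u\|_{H^{1-\delta}})\Pi(\bar u\cdot\nabla\bar u)$, the corrector error is $\mathcal C_s = S_{\theta^n}(u^n) - \tfrac{3\nu}{5}\Delta u^n$, and $\mathcal M^n$ is the stochastic convolution against $P^\nu$. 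Each of the three contributions to $\E\|\rho^n\|^2_{L^2([0,T];H^{1-\delta})}$ is then estimated separately.

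For the corrector piece I would apply Lemma \ref{semigroup property} with $\lambda = 1+\tfrac{3\nu}{5}\sim \nu$ and $\tau = -1-\delta$, obtaining a prefactor $\nu^{-2}$ and an $L^2_t H^{-1-\delta}_x$ norm of $\mathcal C_s$; Theorem \ref{S limit thm} with $b=1$, $\alpha=\delta$ bounds this by $\nu^2 n^{-2\delta}\|u^n\|_{H^1}^2$, and the a priori bound $\int_0^T\|u^n\|_{H^1}^2 ds \leq \tfrac12\|u_0\|_{L^2}^2$ (together with $\|u_0\|_{L^2}$ controlled through $N$) yields the first summand $\nu^2 n^{-2\delta}N^{-2}$. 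For the stochastic convolution, It\^o's isometry gives
$$\E\|\mathcal M^n_t\|_{H^{1-\delta}}^2 \lesssim \nu \sum_{k,i}(\theta^n_k)^2 \, \E\!\int_0^t \big\|P^\nu_{t-s}\Pi(\sigma_{k,i}\cdot\nabla u^n)\big\|_{H^{1-\delta}}^2 ds;$$
interpolating between $\ell^2$ and $\ell^\infty$ produces a factor $\|\theta^n\|_{\ell^\infty}^{2\delta/5}$, and Lemma \ref{semigroup prop 2} combined with the energy bound and integration in $t$ delivers the second summand with its specific powers of $\nu$, $\delta$ and $T$.

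The main obstacle is the nonlinear term. I would split
$$\mathcal N_s = F_N(\|u^n\|_{H^{1-\delta}})\Pi(\rho^n\cdot\nabla u^n) + F_N(\|u^n\|_{H^{1-\delta}})\Pi(\bar u\cdot\nabla\rho^n) + \big[F_N(\|u^n\|_{H^{1-\delta}}) - F_N(\|\bar u\|_{H^{1-\delta}})\big]\Pi(\bar u\cdot\nabla\bar u),$$
and bound the first two pieces in $H^{-1-\delta}$ by $\|\rho^n\|_{H^{1-\delta}}\cdot(\|u^n\|_{H^{1/2+\delta}} + \|\bar u\|_{H^{1/2+\delta}})$ via Lemma \ref{HHH} on $\T^3$ (valid exactly because $\delta < 1/4$), followed by the interpolation $\|\cdot\|_{H^{1/2+\delta}}\leq \|\cdot\|_{L^2}^{1/2-\delta}\|\cdot\|_{H^1}^{1/2+\delta}$. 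The third piece is controlled via Lemma \ref{cut-off function estimate}, which contributes $N^{-1}F_N(\|u^n\|_{H^{1-\delta}})F_N(\|\bar u\|_{H^{1-\delta}})\|\rho^n\|_{H^{1-\delta}}$, together with the product bound $\|\Pi(\bar u\cdot\nabla\bar u)\|_{H^{-1-\delta}}\lesssim \|\bar u\|_{H^1}^2$ and the $L^\infty_t H^1$ regularity of $\bar u$ guaranteed by Lemma \ref{CH1 bounded for baru}. Feeding these estimates into Lemma \ref{semigroup property} a second time yields a contribution of the form $C\nu^{-2}N^2(1+\|u_0\|_{H^1}^2)\,\E\|\rho^n\|^2_{L^2([0,T];H^{1-\delta})}$, which under the hypothesis $\nu^{-2}(1+N^2)(1+\|u_0\|_{H^1}^2)\ll 1$ is absorbed into the left-hand side, leaving the two summands already computed. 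The delicate part is that the cut-off norm $H^{1-\delta}$ is only marginally strong enough for the product estimates to close (hence the restriction $\delta<1/4$ inherited from Lemma \ref{HHH}), and the explicit powers of $\nu$, $\delta$, $T$, $N$ must be tracked carefully through each H\"older and interpolation step in order to match the bound stated in the theorem.
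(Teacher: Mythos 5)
Your overall architecture is the same as the paper's: mild formulation with the semigroup $e^{(1+3\nu/5)t\Delta}$, the three-way decomposition into nonlinear difference, corrector error $S_{\theta^n}(u^n)-\tfrac{3\nu}{5}\Delta u^n$, and stochastic convolution, followed by absorption of the nonlinear contribution under the smallness hypothesis. The corrector term is handled exactly as in the paper (Lemma \ref{semigroup property} plus Theorem \ref{S limit thm}). However, your treatment of the stochastic convolution has a genuine gap. You claim that It\^o isometry in $H^{1-\delta}$ followed by ``interpolating between $\ell^2$ and $\ell^\infty$'' produces the factor $\|\theta^n\|_{\ell^\infty}^{2\delta/5}$. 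It does not: in a positive Sobolev norm the sum $\sum_{k,i}\|P^\nu_{t-s}\Pi(\sigma_{k,i}\cdot\nabla u^n)\|_{H^{1-\delta}}^2$ without the $\theta$-weights diverges, so you cannot peel off any power of $\|\theta^n\|_{\ell^\infty}$ there; with the weights you only get a bound of order $\nu^\delta T^\delta\delta^{-1}\|u_0\|_{L^2}^2$, which is uniformly bounded but does not vanish as $n\to\infty$. The smallness in $n$ is only available in a \emph{negative} Sobolev norm, where almost-orthogonality of the modes gives $\E\big[\sup_s\|Z^n_s\|_{H^{-3/2-\varepsilon}}^p\big]^{1/p}\lesssim \nu^{\varepsilon/2}\|\theta^n\|_{\ell^\infty}\|u_0\|_{L^2}$ (the paper quotes this from \cite[Lemma 2.5]{FLD quantitative}). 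The factor $\|\theta^n\|_{\ell^\infty}^{2\delta/5}$ and the specific powers $\nu^{\delta/2}\delta^{(\delta-5)/5}T^{(7\delta-\delta^2)/10}$ then arise from interpolating $H^{1-\delta}$ between $H^{1-\delta/2}$ (where $Z^n$ is merely bounded in $L^2_t$) and $H^{-3/2-\delta/2}$ (where it is small); this Sobolev-scale interpolation is the crux of the whole quantitative estimate and is missing from your argument.

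There is also a smaller problem in the first piece of the nonlinear term. After bounding $\|u^n\cdot\nabla\rho^n\|_{H^{-1-\delta}}\lesssim\|u^n\|_{H^{1/2+\delta}}\|\rho^n\|_{H^{1-\delta}}$ you propose the interpolation $\|u^n\|_{H^{1/2+\delta}}\le\|u^n\|_{L^2}^{1/2-\delta}\|u^n\|_{H^1}^{1/2+\delta}$. This leaves the factor $\|u^n_s\|_{H^1}^{1+2\delta}$ inside the time integral after squaring, and since $u^n$ is only in $L^2_tH^1$ this cannot be absorbed into $\int_0^t\|\rho^n_s\|_{H^{1-\delta}}^2\,ds$ with a constant of the advertised form; you would be forced into a Gr\"onwall argument producing an exponential of $\int\|u^n\|_{H^1}^{1+2\delta}$. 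The fix, which is what the paper does, is to enlarge $\|u^n\|_{H^{1/2}}$ to $\|u^n\|_{H^{1-\delta}}$ (legitimate for $\delta<1/4$) and use the cut-off identity $F_N(\|u^n\|_{H^{1-\delta}})\,\|u^n\|_{H^{1-\delta}}\le N$, which yields the deterministic constant $N$ and hence the clean prefactor $\nu^{-2}N^2$ that the smallness hypothesis is designed to absorb. (The analogous cut-off trick, combined with Lemma \ref{cut-off function estimate} and Lemma \ref{CH1 bounded for baru}, is also what controls your third piece; your description of that piece is consistent with the paper.)
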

	
	\begin{remark}\label{rmk on quantitative conv}
		1. Fix $N$ and $\|u_0\|_{H^1}$, we first choose a large $\nu$ satisfying our assumption; then letting $n\rightarrow \infty$, by the condition \eqref{assumption on theta}, the above estimate can be sufficiently small for any fixed $T<+\infty$ and $\delta \in (0,\frac{1}{4})$.
		
		2. We suppose $u_0 \in H^1$ just for the sake of estimating $\sup_{t\in [0,T]} \|\bar{u}(t)\|_{H^1}$ above, which will be used for example in \eqref{baru H1 norm}. Actually, for the sequence of stochastic GMNSEs \eqref{sequence of thetaR} with $\Lambda=1$, it is enough to assume $u_0 \in L^2$, which is a direct consequence of $u_0 \in H^1$.
	\end{remark}

	\begin{proof}[Proof of Theorem \ref{quantitative convergence thm}]
		By the scaling limit result in Theorem \ref{scaling limit}, we can rewrite \eqref{sequence of thetaR} with $\Lambda=1$ as
		$$\aligned
		du^n+F_N(\|u^n\|_{H^{1-\delta}}) \Pi(u^n\cdot \nabla u^n) \, dt=&\Big(1+\frac{3\nu}{5}\Big) \Delta u^n \, dt+\sqrt{\frac{3\nu}{2}}\sum_{k,i}  \theta_k^n \Pi(\sigma_{k,i} \cdot \nabla u^n) \, dW_t^{k,i}\\
		&+\Big(S_{\theta^n}(u^n)-\frac{3\nu}{5} \Delta u^n\Big) \, dt.
		\endaligned$$
		If we denote $P_t=e^{(1+\frac{3\nu}{5}) t \Delta}$ for any $t\geq 0$, then we get the mild formulation to the above equation as
		\begin{equation}\label{u^R mild form}
			u^n_t=P_t u_0-\int_{0}^{t} P_{t-s} \Big(F_N(\|u^n_s\|_{H^{1-\delta}}) \Pi(u^n_s\cdot \nabla u^n_s) \Big)\, ds+\int_{0}^{t} P_{t-s} \Big(S_{\theta^n}(u_s^n)-\frac{3\nu}{5} \Delta u_s^n \Big) \, ds+ Z_t^n,
		\end{equation}
		where $Z_t^n$ is a stochastic convolution with the following expression:
		$$Z_t^n:=\sqrt{\frac{3\nu}{2}}\sum_{k,i}\theta_k^n \int_{0}^{t} P_{t-s}  \Pi\big(\sigma_{k,i} \cdot \nabla u_s^n\big) \, dW_s^{k,i}.$$
		Besides, the equation \eqref{limit eq} with $\Lambda=1$ can also be written in mild form:
		\begin{equation}\label{bar u mild form}
			\bar{u}_t=P_t u_0- \int_{0}^{t} P_{t-s} \Big(F_N(\|\bar{u}_s\|_{H^{1-\delta}}) \Pi(\bar{u}_s\cdot \nabla \bar{u}_s) \Big)\, ds.
		\end{equation}
		So we make difference between \eqref{u^R mild form} and \eqref{bar u mild form} to get
		$$\aligned
		u_t^n-\bar{u}_t= &-\int_{0}^{t} P_{t-s} \Big(F_N(\|u^n_s\|_{H^{1-\delta}}) \Pi(u^n_s\cdot \nabla u^n_s)-F_N(\|\bar{u}_s\|_{H^{1-\delta}}) \Pi(\bar{u}_s\cdot \nabla \bar{u}_s) \Big) \, ds\\
		&+\int_{0}^{t} P_{t-s}\Big(S_{\theta^n}(u_s^n)-\frac{3\nu}{5} \Delta u_s^n \Big) \, ds+ Z_t^n.
		\endaligned $$
		Taking $H^{1-\delta}$-norm and then square on the both sides yields
		\begin{equation}\label{H-lambda norm for difference}
			\begin{split}
				\|u_t^n-\bar{u}_t\|^2_{H^{1-\delta}} &\lesssim \bigg\| \int_{0}^{t} P_{t-s} \Big(F_N(\|u^n_s\|_{H^{1-\delta}}) \Pi(u^n_s\cdot \nabla u^n_s)-F_N(\|\bar{u}_s\|_{H^{1-\delta}}) \Pi(\bar{u}_s\cdot \nabla \bar{u}_s) \Big) \, ds\bigg\|_{H^{1-\delta}}^2\\
				&\quad+\bigg\|\int_{0}^{t}P_{t-s}\Big(S_{\theta^n}(u_s^n)-\frac{3\nu}{5} \Delta u_s^n \Big) \, ds \bigg\|^2_{H^{1-\delta}}+\big\|Z_t^n\big\|^2_{H^{1-\delta}}\\
				&=:A_{1,n}(t)+A_{2,n}(t)+\big\|Z_t^n\big\|^2_{H^{1-\delta}}.
			\end{split}
		\end{equation}
		For $A_{1,n}(t)$, we can use the triangle inequality to further decompose it as
		$$\aligned
		A_{1,n}(t)
		&\lesssim \bigg\| \int_{0}^{t} P_{t-s} \Big[F_N(\|u^n_s\|_{H^{1-\delta}})  \Pi \big(u^n_s\cdot \nabla (u^n_s-\bar{u}_s)\big) \Big]\, ds\bigg\|_{H^{1-\delta}}^2\\
		&\quad+\bigg\| \int_{0}^{t} P_{t-s} \Big[F_N(\|u_s^n\|_{H^{1-\delta}}) \Pi \big( (u^n_s-\bar{u}_s)\cdot \nabla \bar{u}_s \big) \Big]\, ds \bigg\|_{H^{1-\delta}}^2\\
		&\quad+\bigg\| \int_{0}^{t} P_{t-s} \Big[\big(F_N(\|u_s^n\|_{H^{1-\delta}})-F_N(\|\bar{u}_s\|_{H^{1-\delta}})\big) \Pi \big( \bar{u}_s \cdot \nabla \bar{u}_s \big) \Big]\, ds \bigg\|_{H^{1-\delta}}^2\\
		&=:A_{1,1,n}(t)+A_{1,2,n}(t)+A_{1,3,n}(t).
		\endaligned $$
		Let us consider the time integral of each term separately. By Lemma \ref{semigroup property},  for any $\delta \in (0,\frac{1}{4})$,  it holds
		$$\aligned
		\int_{0}^{t} A_{1,1,n}(s) \, ds &\lesssim \Big(1+\frac{3\nu}{5}\Big)^{-2} \int_{0}^{t} \Big\| F_N(\|u^n_s\|_{H^{1-\delta}}) \Pi \big(u^n_s\cdot \nabla (u^n_s-\bar{u}_s)\big)\Big\|_{H^{-1-\delta}}^2 \, ds\\
		&\lesssim \nu^{-2} \int_{0}^{t} F ^2_N(\|u^n_s\|_{H^{1-\delta}}) \|u_s^n\|_{H^{\frac{1}{2}}}^2 \|\nabla(u_s^n-\bar{u}_s)\|_{H^{-\delta}}^2 \, ds\\
		&\lesssim N^2 \nu^{-2}  \int_{0}^{t}\|u_s^n-\bar{u}_s\|_{H^{1-\delta}}^2 \, ds,
		\endaligned $$
		where in the last step we magnified $\|u^n_s\|_{H^\frac{1}{2}}$ into $\|u^n_s\|_{H^{1-\delta}}$ for $\delta \in (0,\frac{1}{4})$ and utilized the definition of the cut-off function.
		
		For the second term, we need to apply Lemma \ref{CH1 bounded for baru}. Noticing that $\sup_{t\in [0,T]} \|\bar{u}_t\|_{H^1} \lesssim \|u_0\|_{H^1}$ and $F_N(\|u_t^n\|_{H^{1-\delta}})\leq 1$, we have
		\begin{equation}\label{baru H1 norm}
			\begin{split}
				\int_{0}^{t}A_{1,2,n}(s) \, ds & \lesssim  \Big(1+\frac{3\nu}{5}\Big)^{-2} \int_{0}^{t}  \Big\| F_N(\|u_s^n\|_{H^{1-\delta}}) \Pi \big( (u^n_s-\bar{u}_s)\cdot \nabla \bar{u}_s \big) \Big\|_{H^{-1-\delta}}^2 \, ds\\
				&\lesssim  \nu^{-2} \int_{0}^{t} \|u_s^n-\bar{u}_s\|_{H^{1-\delta}}^2 \|\nabla \bar{u}_s\|_{H^{-\frac{1}{2}}}^2 \, ds\\
				&\lesssim   \nu^{-2}  \|u_0\|^2_{H^1} \int_{0}^{t} \|u_s^n-\bar{u}_s\|_{H^{1-\delta}}^2  \, ds.
			\end{split}
		\end{equation}
		As for the term $A_{1,3,n}(t)$, we use Lemmas \ref{cut-off function estimate} and \ref{CH1 bounded for baru} to get,
		$$\aligned
		\int_{0}^{t} A_{1,3,n}(s) \, ds & \lesssim \Big(1+\frac{3\nu}{5}\Big)^{-2} \int_{0}^t \Big\| \big(F_N(\|u_s^n\|_{H^{1-\delta}})-F_N(\|\bar{u}_s\|_{H^{1-\delta}})\big) \Pi \big( \bar{u}_s \cdot \nabla \bar{u}_s \big)  \Big\|^2_{H^{-1-\delta}} \, ds\\
		&\lesssim \nu^{-2} N^{-2} \int_{0}^{t} F^2_N(\|u_s^n\|_{H^{1-\delta}}) F^2_N(\|\bar{u}_s\|_{H^{1-\delta}}) \|u_s^n-\bar{u}_s\|_{H^{1-\delta}}^2 \|\bar{u}_s\|_{H^{\frac{1}{2}}}^2 \|\nabla\bar{u}_s\|_{H^{-\delta}}^2 \, ds\\
		&\lesssim \nu^{-2} \|u_0\|_{H^1}^2 \int_{0}^{t} \|u_s^n-\bar{u}_s\|_{H^{1-\delta}}^2  \, ds,
		\endaligned $$
		where the last step is due to the facts  $F_N(\|\bar{u}_s\|_{H^{1-\delta}}) \|\bar{u}_s\|_{H^{1-\delta}}  \leq N$ and $F_N(\|u_s^n\|_{H^{1-\delta}}) \leq 1$. Summarizing the above estimates for $A_{1,i,n}$, $i=1,2,3$, we arrive at
		\begin{equation}\label{A1R}
			\int_{0}^{t} A_{1,n}(s) \, ds \lesssim  \nu^{-2} (1+N^2) \big(1+\|u_0\|_{H^1}^2\big) \int_{0}^{t} \|u_s^n-\bar{u}_s\|_{H^{1-\delta}}^2  \, ds.
		\end{equation}
		
		Then we turn to the term $A_{2,n}$. Applying Theorem \ref{S limit thm} with $b=1$ and $\alpha=\delta$, it holds
		\begin{equation}\nonumber
			\begin{split}
				\E \bigg[\int_{0}^{t} A_{2,n}(s) \, ds\bigg]  &\lesssim \Big(1+\frac{3\nu}{5}\Big)^{-2}\, \E \int_{0}^{t} \Big\|  S_{\theta^n}(u_s^n)-\frac{3\nu}{5} \Delta u_s^n   \bigg\|_{H^{-1-\delta}}^2 \, ds \\
				&\lesssim   \nu^2 n^{-2\delta}  \Big(1+\frac{3\nu}{5}\Big)^{-2} \, \E \int_{0}^{t}\|u_s^n\|_{H^1}^2 \, ds\\
				&\lesssim  n^{-2\delta}  \|u_0\|_{L^2}^2.
			\end{split}
		\end{equation}
		For the last term $\big\|Z_t^n\big\|^2_{H^{1-\delta}}$, we need to apply the method of interpolation. First, by the definition of $Z_t^n$ and Lemma \ref{semigroup prop 2}, for $\tau \in (0,1)$, it holds
		$$\aligned
		\E \, \big\|Z_s^n\big\|_{H^{\tau}}^2&=\frac{3\nu}{2} \, \E \, \bigg\|\sum_{k,i} \theta_k^n \int_{0}^{s} P_{s-z} \Pi \big(\sigma_{k,i} \cdot \nabla u_z^n\big) \, dW^{k,i}_z \bigg\|_{H^\tau}^2\\
		&\lesssim \nu \, \E \bigg[\sum_{k,i} \big(\theta_k^n\big)^2 \int_{0}^{s} \big\|P_{s-z} \Pi\big(\sigma_{k,i} \cdot \nabla u_z^n\big)\big\|_{H^\tau}^2 \, dz\bigg]\\
		&\lesssim \nu \, \E \bigg[\sum_{k,i} \big(\theta_k^n\big)^2 \int_{0}^{s} \frac{1}{(1+\frac{3\nu}{5})^\tau (s-z)^\tau} \big\|\sigma_{k,i} \cdot \nabla u_z^n\big\|_{L^2}^2 \, dz \bigg]\\
		&\lesssim \nu^{1-\tau} \|\theta^n\|_{\ell^2}^2 \, \E \int_{0}^{s} \frac{1}{(s-z)^\tau}  \big\| \nabla u_z^n\big\|_{L^2}^2 \, dz,
		\endaligned $$
		and therefore by energy estimate and the assumption that $\|\theta^n\|_{\ell^2}=1$, we have
		\begin{equation}\label{stochastic convolution L2Ha}
			\begin{split}
				\int_{0}^{t} \E \, \big\|Z_s^n\big\|_{H^{\tau}}^2 \, ds &\lesssim \nu^{1-\tau}  \int_{0}^{t} \E \int_{0}^{s} \frac{1}{(s-z)^\tau}  \big\| \nabla u_z^n\big\|_{L^2}^2 \, dz ds\\
				&=\nu^{1-\tau}   \, \E \int_{0}^{t} \big\|\nabla u_z^n\big\|_{L^2}^2 \int_{z}^{t} \frac{1}{(s-z)^\tau} \, ds dz\\
				&\lesssim \frac{ \nu^{1-\tau}  T^{1-\tau}}{1-\tau}   \|u_0\|_{L^2}^2.
			\end{split}
		\end{equation}
		Besides, by \cite[Lemma 2.5]{FLD quantitative}, for $\varepsilon \in (0,1/2)$ and $p \in [1,\infty)$, it holds
		$$\E \, \bigg[\sup_{s\in [0,T]} \|Z_s^n\|_{H^{-\frac{3}{2}-\varepsilon}}^p\bigg]^{1/p} \lesssim \nu^\frac{1}{2} \Big(1+\frac{3\nu}{5}\Big)^{\frac{\varepsilon-1}{2}} \|\theta^n\|_{\ell^\infty} \|u_0\|_{L^2}\lesssim \nu^{\frac{\varepsilon}{2}}  \|\theta^n\|_{\ell^\infty} \|u_0\|_{L^2}.$$
		For any fixed $\delta \in (0,1/4)$,  choosing $\varepsilon=\delta/2 \in (0,1/8)$ in the above estimate and letting $\tau=1-\delta/2$ in \eqref{stochastic convolution L2Ha}, we use interpolation inequality, H\"older's inequality and the above two estimates to get
		\begin{equation}\label{stochastic convolution estimate}
			\begin{split}
				\E \bigg[\int_{0}^{t} \big\|Z_s^n\big\|^2_{H^{1-\delta}} \, ds\bigg]& \lesssim \int_{0}^{t} \E \,  \big\|Z_s^n\big\|_{H^{1-\frac{\delta}{2}}}^{\frac{10-2\delta}{5}} \, \big\|Z_s^n\big\|_{H^{-\frac{3}{2}-\frac{\delta}{2}}}^{\frac{2\delta}{5}}\, ds\\
				&\lesssim \int_{0}^{t} \Big(\E \,  \big\|Z_s^n\big\|_{H^{1-\frac{\delta}{2}}}^{2}\Big)^\frac{5-\delta}{5} \Big(\E \, \big\|Z_s^n\big\|_{H^{-\frac{3}{2}-\frac{\delta}{2}}}^2\Big)^{\frac{1}{2}\cdot\frac{2\delta}{5}} \, ds\\
				&\lesssim \nu^\frac{\delta}{2} \delta^{\frac{\delta-5}{5}} T^{\frac{7\delta-\delta^2}{10}} \|\theta^n\|_{\ell^\infty}^{\frac{2\delta}{5}} \|u_0\|_{L^2}^2 .
			\end{split}
		\end{equation}
		
		Summarizing the above estimates, we take integral with respect to time and then expectation on the both sides of \eqref{H-lambda norm for difference} to arrive at
		\begin{equation}\label{quantitative convergence}
			\begin{split}
				\E \bigg[\int_{0}^{t} \|u_s^n-\bar{u}_s\|_{H^{1-\delta}}^2 \, ds\bigg]
				& \lesssim   \nu^{-2} (1+N^2) \big(1+\|u_0\|_{H^1}^2\big)  \, \E \bigg[\int_{0}^{t} \|u_s^n-\bar{u}_s\|_{H^{1-\delta}}^2  \, ds\bigg]\\
				&\quad +n^{-2\delta}   \|u_0\|_{L^2}^2 +\nu^\frac{\delta}{2} \delta^{\frac{\delta-5}{5}} T^{\frac{7\delta-\delta^2}{10}} \|\theta^n\|_{\ell^\infty}^{\frac{2\delta}{5}} \|u_0\|_{L^2}^2 .
			\end{split}
		\end{equation}
		Recall that we have assumed that $ \nu^{-2} (1+N^2) \big(1+\|u_0\|_{H^1}^2\big) \ll 1$, hence it holds
		$$ 1- \nu^{-2} (1+N^2) \big(1+\|u_0\|_{H^1}^2\big)  > \nu^{-2} (1+N^2) \big(1+\|u_0\|_{H^1}^2\big)>\nu^{-2} N^2\|u_0\|_{H^1}^2. $$
		Rearrange formula \eqref{quantitative convergence} and we get
		$$\aligned
		\E \bigg[\int_{0}^{t} \|u_s^n-\bar{u}_s\|_{H^{1-\delta}}^2 \, ds\bigg]
		& \lesssim \frac{  n^{-2\delta}  \|u_0\|_{L^2}^2}{ \nu^{-2} N^2\|u_0\|_{H^1}^2 }+\frac{\nu^\frac{\delta}{2} \delta^{\frac{\delta-5}{5}} T^{\frac{7\delta-\delta^2}{10}} \|\theta^n\|_{\ell^\infty}^{\frac{2\delta}{5}} \|u_0\|_{L^2}^2  }{ \nu^{-2} N^2\|u_0\|_{H^1}^2 }\\
		&\lesssim \nu^2 n^{-2\delta} N^{-2} +\nu^{\frac{\delta}{2}+2} \delta^{\frac{\delta-5}{5}} T^{\frac{7\delta-\delta^2}{10}} N^{-2} \|\theta^n\|_{\ell^\infty}^{\frac{2\delta}{5}}.
		\endaligned $$
		Taking the supremum for the time $t\in [0,T]$ and noticing that the estimate on the right-hand side is independent of $t$, we complete the proof.
	\end{proof}
	
	\begin{remark}\label{rmk on quant conv rate}
		1. Due to the existence of the cut-off function $F_N(\|u\|_{H^{1-\delta}})$, it is necessary to take $H^{1-\delta}$-norm on the both sides of formula \eqref{H-lambda norm for difference}. However, the stochastic convolution $Z^n$ is small enough only under some negative Sobolev norm. To address this, we employ the method of interpolation, together with the fact that $\{Z^n \}_n$ is bounded in $L^2([0,T];H^\tau)$ for any $\tau \in (0,1)$, to get a sufficiently small estimate for $\|Z^n\|_{L^2([0,T];H^{1-\delta})}$, see \eqref{stochastic convolution estimate} above for details.
		
		2. In formula \eqref{stochastic convolution estimate}, we choose $\varepsilon=\delta/2$ and $\tau=1-\delta/2$ for simplicity. In fact, one can take other values for these two parameters and then apply interpolation to get a slightly different estimate, which also yields a sufficiently small bound on $\E \|u-\bar{u}\|^2_{L^2 ([0,T];H^{1-\delta})}$  under some suitable assumptions.

		3. For hyperviscous stochastic GMNSE, we can utilize Theorem \ref{scaling limit} to write the mild solutions of the sequence to equations \eqref{sequence of thetaR} with $\Lambda \in (1,2)$ as
		$$\aligned
		u^n_t&=\bar{P}_t u_0-\int_{0}^{t} \bar{P}_{t-s} \Big[F_N(\|u^n_s\|_{H^{1-\delta}}) \Pi(u^n_s\cdot \nabla u^n_s) \Big]\, ds+\int_{0}^{t} \bar{P}_{t-s} \Big(S_{\theta^n}(u_s^n)-\frac{3\nu}{5} \Delta u_s^n \Big) \, ds\\
		&\quad-\int_{0}^{t} \bar{P}_{t-s} (-\Delta)^\Lambda u_s^n \, ds+ \bar{Z}_t^n, \quad \forall n\geq 1,
		\endaligned $$
		where $\bar{P}_t=e^{\frac{3\nu}{5}t\Delta}$ and
		$$\bar{Z}_t^n:=\sqrt{\frac{3\nu}{2}}\sum_{k,i}\theta_k^n \int_{0}^{t} \bar{P}_{t-s}  \Pi\big(\sigma_{k,i} \cdot \nabla u_s^n\big) \, dW_s^{k,i}.$$
		Similarly, the mild solutions to \eqref{limit eq} can be written as
		$$\bar{u}_t=\bar{P}_t u_0-\int_{0}^{t} \bar{P}_{t-s} \Big(F_N(\|\bar{u}_s\|_{H^{1-\delta}}) \Pi(\bar{u}_s\cdot \nabla \bar{u}_s) \Big)\, ds-\int_{0}^{t}\bar{P}_{t-s} (-\Delta)^\Lambda \bar{u}_s \, ds. $$
		Making difference between $u^n_t$ and $\bar{u}_t$, we take $H^{1-\delta}$-norm and then square to obtain
		\begin{equation}\label{difference for hyperviscous}
			\begin{split}
				\|u_t^n-\bar{u}_t\|_{H^{1-\delta}}^2 &\lesssim \bigg\|\! \int_{0}^{t}\! \bar{P}_{t-s} \Big(F_N(\|u^n_s\|_{H^{1-\delta}}) \Pi(u^n_s\cdot \nabla u^n_s)-F_N(\|\bar{u}_s\|_{H^{1-\delta}}) \Pi(\bar{u}_s\cdot \nabla \bar{u}_s) \Big) ds \bigg\|_{H^{1-\delta}}^2 \\
				&\quad +\bigg\|\int_{0}^{t} \bar{P}_{t-s}\Big(S_{\theta^n}(u_s^n)-\frac{3\nu}{5} \Delta u_s^n \Big) \, ds \bigg\|_{H^{1-\delta}}^2+ \big\|\bar{Z}_t^n\|_{H^{1-\delta}}^2 \\
&\quad + \bigg\|\int_{0}^{t} \bar{P}_{t-s} \big[(-\Delta)^\Lambda (u_s^n-\bar{u}_s) \big]\, ds \bigg\|_{H^{1-\delta}}^2.
			\end{split}
		\end{equation}
		The estimates of these terms are similar to those of stochastic GMNSE except for the last term. In fact, using Lemma \ref{semigroup property}, we have
		$$\aligned
		\int_{0}^{t} \bigg\|\int_{0}^{s} \bar{P}_{s-z}\big[(-\Delta)^\Lambda (u_z^n-\bar{u}_z) \big] \, dz\bigg\|_{H^{1-\delta}}^2 \, ds &\lesssim \nu^{-2} \int_{0}^{t} \big\|(-\Delta)^\Lambda (u_s^n-\bar{u}_s)\big\|^2_{H^{-1-\delta}} \, ds \\
		&\lesssim \nu^{-2} \int_{0}^{t}\|u_s^n-\bar{u}_s\|^2_{H^{2\Lambda-1-\delta}} \, ds.
		\endaligned $$
		As $2\Lambda-1-\delta>1-\delta$, we cannot magnify it into $\|u^n-\bar{u}\|_{L^2([0,T];H^{1-\delta})}$ and then move it to the left-hand side of the equation obtained after time integral of \eqref{difference for hyperviscous}.  However, recalling that $\bar{u}, \, u^n \in L^2([0,T];H^\Lambda)$, we can make a restriction that $2\Lambda-1-\delta \leq \Lambda$, i.e., $\Lambda \leq 1+\delta$ to bound this term. Then we can apply arguments similar to those used for the stochastic GMNSE to obtain the quantitative convergence rate.
	\end{remark}

	\section{Large deviation principle} \label{sec-LDP}
	In this section, we will apply weak convergence method to prove our final result on LDP. We first introduce this approach in Section \ref{subsec-weak convergence method}, then we prove Theorem \ref{LDP thm} in Section \ref{subsec-prf of LDP}.

	\subsection{Weak convergence method}\label{subsec-weak convergence method}
	To begin with, we introduce some spaces. Let $(\Omega,\mathcal{F},(\mathcal{F})_t,\P)$ be a stochastic basis satisfying the usual conditions. Let $U$ be a Hilbert space, on which we have a trace class operator $Q$. Denote $U_0=Q^{\frac{1}{2}} (U)$ as Cameron-Martin space, endowed with the inner product
	$$\<g,h\>_{U_0}=\big\< Q^{-\frac{1}{2}} g, Q^{-\frac{1}{2}} h\big\>_U, \quad g,h \in U_0.$$
	Then we define the space
	\begin{equation}\label{SM space}
		S^M=S^M(U_0):=\bigg\{ g\in L^2([0,T];U_0): \int_{0}^{T} \|g(s)\|_{U_0}^2 \, ds \leq M\bigg\}.
	\end{equation}
	If $S^M$ is endowed with the weak topology, it is a Polish space. Denote $\mathcal{P}_2(U_0)$ as the class of $U_0$-valued $\mathcal{F}_t$-predictable processes $g$, such that $$\int_{0}^{T} \|g(s)\|_{U_0}^2 \, ds<\infty, \quad \P \text{-a.s.} ,$$
	and define
	$$\mathcal{P}_2^M=\mathcal{P}_2^M(U_0):=\big\{g\in \mathcal{P}_2(U_0):g(\cdot, \omega)\in S^M(U_0) \ \text{for} \ \P\text{-a.s.} \ \omega \big\}.$$
	In the sequel, we always denote $\mathcal{E}$ and $\mathcal{E}_0$ as Polish spaces.
	
	Then we provide the definitions of rate function and LDP.
	\begin{definition}\label{def of rate function}
		A function $I: \mathcal{E} \rightarrow [0,\infty]$ is called a rate function if for any $M<\infty$, the level set $\{f\in \mathcal{E}:I(f)\leq M\}$ is a compact subset of $\mathcal{E}$. A family of rate functions $I_x$ on $\mathcal{E}$, indexed by $x\in \mathcal{E}_0$, is said to have compact level sets on compacts if for all compact subsets $\mathcal{K}$ of $\mathcal{E}_0$ and each $M<\infty$, $\cup_{x\in \mathcal{K}} \{f\in \mathcal{E}: I_x(f)\leq M\}$ is a compact subset of $\mathcal{E}$.
	\end{definition}
	
	\begin{definition}\label{def of LDP}
		Let $I$ be a rate function on $\mathcal{E}$ and denote $I(A):=\inf_{f\in A} I(f)$ for any Borel set $A \in \mathcal{B}(\mathcal{E})$. We say that a family of $\mathcal{E}$-valued random variables $\{X^\varepsilon\}$  satisfies LDP with speed $\varepsilon$ and rate function $I$ if
		$$-I(A^\circ) \leq \liminf_{\varepsilon \rightarrow 0} \varepsilon \log \P(X^\varepsilon \in A) \leq \limsup_{\varepsilon \rightarrow 0} \varepsilon \log \P(X^\varepsilon \in A) \leq -I(\bar{A}),$$
		where $A^\circ$ and $\bar{A}$ are the interior and closure of $A$, respectively.
	\end{definition}
	
	We can see from Definition \ref{def of LDP} that, the theory of LDP is concerned with events $A$ for which the probabilities $ \P(X^\varepsilon \in A)$ decay exponentially fast as $\varepsilon \rightarrow 0$. This decay rate is characterized by the rate function defined in Definition \ref{def of rate function}. Usually, $X^\varepsilon$ are probabilistically strong solutions to some SPDEs driven by $\sqrt{\varepsilon} W$ with initial value $x \in \mathcal{E}_0$. And there exist measurable maps $\mathcal{G}^\varepsilon: \mathcal{E}_0 \times C_t^0 U \rightarrow \mathcal{E}$, such that the solutions can be written as $X^{\varepsilon,x}=\mathcal{G}^\varepsilon(x,\sqrt{\varepsilon}W)$.
	
	Recall the notation $\text{Int}(g)(\cdot)=\int_{0}^{\cdot} g(s) \, ds$ introduced above Theorem \ref{LDP thm}.
	To apply the weak convergence method to obtain the LDP result,  the following two assumptions need to be verified (see \cite[Theorem 5]{BDM08} for more details):
	\begin{hypothesis}\label{LDP hypothesis}
		There exists a measurable map $\mathcal{G}^0: \mathcal{E}_0 \times C_t^0 U \rightarrow \mathcal{E}$, such that
		
		1. For any $M<\infty$ and compact set $\mathcal{K} \subset \mathcal{E}_0$, $\Gamma_{\mathcal{K},M}:=\big\{ \mathcal{G}^0(x, \text{Int}(g)): g\in S^M, x\in \mathcal{K}\big\}$ is a compact subset of $\mathcal{E}$;
		
		2. Consider $M<\infty$ and families $\{x^\varepsilon\} \subset \mathcal{E}_0$, $\{g^\varepsilon\} \subset \mathcal{P}_2^M$ satisfying: $x^\varepsilon \rightarrow x$ and $g^\varepsilon$ converge in law to $g$ as $S^M$-valued random elements as $\varepsilon \rightarrow 0$. Then $\mathcal{G}^\varepsilon\big(x^\varepsilon, \sqrt{\varepsilon} W+\text{Int}(g^\varepsilon)\big)$ converges in law to $\mathcal{G}^0\big(x,\text{Int}(g)\big)$ in the topology of $\mathcal{E}$.
		
	\end{hypothesis}
	
	\subsection{Proof of Theorem \ref{LDP thm}} \label{subsec-prf of LDP}
	We will provide the verifications of the above two hypotheses in Lemmas \ref{hypothesis 1} and \ref{hypothesis 2}, respectively.	To begin with, we give the definition of weak solutions to the skeleton equation \eqref{skeleton eq}. Recall that the Sobolev spaces $H^s\ (s\in \R)$ consist of divergence-free vector fields.
	
	\begin{definition}\label{def of solution of skeleton eq}
		Let $u_0 \in L^2$ and $g\in L^2([0,T];H^r)$ for $r \in (0,\frac{3}{2})$ be divergence-free. We say that  $u$ is a weak solution to skeleton equation \eqref{skeleton eq} if for any $\phi \in C^\infty(\T^3,\R^3)$ satisfying ${\rm div} \phi=0$, it holds
		$$\aligned
		\<u(t), \phi\>&=\<u_0,\phi\>+\int_{0}^{t} F_N(\|u(s)\|_{H^{1}}) \big\<u(s) \cdot \nabla \phi, u(s) \big\> \, ds-\int_{0}^{t} \big\<u(s),(-\Delta)^\Lambda \phi \big\>\, ds\\
		&\quad+\frac{3\nu}{5}\int_{0}^{t} \<u(s), \Delta \phi\> \, ds-\int_{0}^{t} \big\<u(s) \cdot \nabla \phi, g(s) \big\> \, ds.
		\endaligned $$
	\end{definition}
	
	Based on the above definition, we claim
	\begin{lemma}\label{well posedness for skeleton eq}
		Given arbitrary $T>0$, $\Lambda\in (1,2)$ and $r \in (0,\frac{3}{2})$ satisfying $\Lambda+r>\frac{5}{2}$, for any initial value $u_0 \in L^2$ and $g\in L^2([0,T];H^r)$, there exists a unique weak solution $u \in L^2([0,T];H^\Lambda) \cap L^\infty([0,T];L^2)$ to skeleton equation \eqref{skeleton eq}.
	\end{lemma}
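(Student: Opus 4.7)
The plan is to combine a Galerkin approximation with compactness for existence, and to run a direct energy-difference argument for uniqueness, the latter relying crucially on the hypothesis $\Lambda + r > 5/2$.

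First I would project \eqref{skeleton eq} onto $H_m$ and solve the resulting finite-dimensional ODE for $u^m$. The vector field is locally Lipschitz in $u^m$ and $L^2$-integrable in time (the $g$-transport contributes a time-dependent linear term whose norm is controlled by $\|g(\cdot)\|_{H^r}$ via Lemma \ref{HHH}). The energy identity
$$\frac{1}{2}\frac{d}{dt}\|u^m\|_{L^2}^2 + \big\|(-\Delta)^{\Lambda/2}u^m\big\|_{L^2}^2 + \frac{3\nu}{5}\|\nabla u^m\|_{L^2}^2 = 0,$$
in which both the cutoff nonlinearity and the transport term $\langle u^m, \Pi(g\cdot\nabla u^m)\rangle$ vanish by divergence-freeness of $u^m$ and $g$ respectively, gives global existence and uniform bounds $\{u^m\} \subset L^\infty([0,T];L^2) \cap L^2([0,T];H^\Lambda)$. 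An analogue of Lemma \ref{bdd in sobolev space} with the extra term $\Pi(g\cdot\nabla u^m)$ handled via Lemma \ref{HHH} yields a bound on $\partial_t u^m$ in $L^2([0,T];H^{-\beta})$ for some $\beta > 5/2$, whence Theorem \ref{compact embedding}(i$'$) gives compactness in $L^2([0,T];H^1)$; passing to the limit in the weak formulation then follows Section \ref{subsec-exist} with simplifications, since everything is deterministic.

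For uniqueness, let $u_1,u_2$ be two solutions with the same initial value and set $\xi = u_1 - u_2$. The key simplification is that $\mathrm{div}\, g = 0$ forces
$$\big\langle \xi, \Pi(g\cdot\nabla \xi)\big\rangle = \int_{\T^3} g\cdot \nabla\big(|\xi|^2/2\big)\, dx = 0,$$
so the $g$-term drops from the energy identity. What must be checked is the integrability
$$\int_0^T \|\xi(t)\|_{H^\Lambda}^2 + \|\partial_t \xi(t)\|_{H^{-\Lambda}}^2\, dt < \infty$$
needed to apply the Lions--Magenes energy equality in the duality pair $(H^\Lambda, H^{-\Lambda})$. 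The $H^{-\Lambda}$-bound for the cutoff nonlinearity and the linear dissipative contributions is routine (it follows the arguments of Section \ref{subsec-unique} with $\delta = 0$, using $H^{-1} \subset H^{-\Lambda}$). For the transport term, Lemma \ref{HHH} applied with $s_1 = r$, $s_2 = \Lambda - 1$ (both less than $3/2$, with $s_1 + s_2 = r + \Lambda - 1 > 3/2$) yields
$$\big\|\Pi(g\cdot\nabla\xi)\big\|_{H^{r+\Lambda-5/2}} \lesssim \|g\|_{H^r}\|\xi\|_{H^\Lambda},$$
and since $r + \Lambda - 5/2 > 0 > -\Lambda$, this embeds into $H^{-\Lambda}$, giving the required integrability. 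The energy identity then reads
$$\frac{d}{dt}\|\xi\|_{L^2}^2 + 2\|\xi\|_{H^\Lambda}^2 + \frac{6\nu}{5}\|\nabla\xi\|_{L^2}^2 \lesssim \|\xi\|_{H^1}^2 + C(N)\|\xi\|_{L^2}^2,$$
where the right-hand side comes from the Romito-type bound \eqref{nonlinear difference estimate} with $\delta = 0$; since $\Lambda > 1$ the piece $\|\xi\|_{H^1}^2$ is absorbed into $2\|\xi\|_{H^\Lambda}^2$, and Grönwall concludes $\xi \equiv 0$.

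The main obstacle is precisely the $H^{-\Lambda}$-integrability of $\Pi(g\cdot\nabla\xi)$: this is the obstruction identified in Remark \ref{rmk on LDP GMNSE} for $\Lambda = 1$, and is resolved here by combining the stronger hyperviscous dissipation with the hypothesis $\Lambda + r > 5/2$, which is exactly what makes the product estimate of Lemma \ref{HHH} deliver a Sobolev space embedding into $H^{-\Lambda}$.
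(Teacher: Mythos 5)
Your overall strategy coincides with the paper's: Galerkin approximation plus the compact embeddings of Theorem \ref{compact embedding} for existence, and the Lions--Magenes energy equality in the pair $(H^\Lambda,H^{-\Lambda})$ followed by the Romito-type bound and Gr\"onwall for uniqueness. The existence half and the cancellation $\<\xi,\Pi(g\cdot\nabla\xi)\>=0$ are fine as you present them.

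There is, however, one step that fails as written: your verification that $\partial_t\xi\in L^2([0,T];H^{-\Lambda})$. You estimate the transport contribution by applying Lemma \ref{HHH} with $s_1=r$, $s_2=\Lambda-1$, obtaining
$$\big\|\Pi(g\cdot\nabla\xi)\big\|_{H^{-\Lambda}}\lesssim \|g\|_{H^r}\,\|\xi\|_{H^\Lambda}.$$
This is a correct pointwise-in-time inequality, but it does not deliver the required time integrability: both $\|g\|_{H^r}^2$ and $\|\xi\|_{H^\Lambda}^2$ are merely in $L^1([0,T])$, and the product of two $L^1$ functions need not be integrable. So $\int_0^T\|\Pi(g\cdot\nabla\xi)\|_{H^{-\Lambda}}^2\,dt$ is not controlled by this route, and the hypothesis of the energy equality is not verified. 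The cure is to distribute the derivatives the other way, which is what the paper does: writing $g\cdot\nabla\xi=\operatorname{div}(g\otimes\xi)$ and applying Lemma \ref{HHH} with $s_1=r$, $s_2=\tfrac52-\Lambda-r$ gives
$$\big\|\Pi(g\cdot\nabla\xi)\big\|_{H^{-\Lambda}}\lesssim \|g\xi\|_{H^{1-\Lambda}}\lesssim \|g\|_{H^r}\,\|\xi\|_{H^{\frac52-\Lambda-r}}\leq \|g\|_{H^r}\,\|\xi\|_{L^2},$$
where the last step uses precisely $\Lambda+r>\tfrac52$ to make the Sobolev index on $\xi$ negative. Since $\xi\in L^\infty([0,T];L^2)$, the square of the right-hand side is integrable in time using only $g\in L^2([0,T];H^r)$. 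So your identification of the role of the hypothesis $\Lambda+r>\tfrac52$ is correct in spirit, but it must be spent on lowering the norm of $\xi$ to $L^2$ (so that the $L^\infty$-in-time energy bound applies), not on raising the target space above $H^{-\Lambda}$. With this correction the rest of your uniqueness argument, and hence the whole proof, goes through exactly as in the paper.
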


	\begin{proof}
		We will divide the proof into two steps, where we respectively prove the uniqueness and existence of the weak solution.
		
		\textbf{Step 1. Uniqueness.} Suppose $u_1$ and $u_2$ are two weak solutions of \eqref{skeleton eq} with the same initial value $u_0$ and $g$, then set $\xi:=u_1-u_2$. To apply the energy equality, we need to verify the assumption in \cite[Theorem 2.12]{RL 1990}, i.e.,
		\begin{equation}\label{Lions-Magenes}
			\int_{0}^{T} \|\xi (t)\|_{H^\Lambda}^2+\|\partial_t \xi\|_{H^{-\Lambda}}^2 \, dt <+\infty.
		\end{equation}
		The first term is trivial by the regularity of $\xi$. For the second term, we only discuss the term associated to $g$; as for the verification of the nonlinear term, it is similar to the arguments below \eqref{ito verification}. Notice that
		$$\big\|\Pi(g\cdot \nabla \xi) \big\|_{H^{-\Lambda}} \lesssim \|g\xi\|_{H^{1-\Lambda}} \lesssim \|g\|_{H^r}\|\xi\|_{H^{\frac{5}{2}-\Lambda-r}},$$
		hence for $\Lambda+r>\frac{5}{2}$ and $g\in L^2([0,T];H^r)$, we have
		$$\int_{0}^{T} \big\|\Pi(g\cdot \nabla \xi) \big\|_{H^{-\Lambda}}^2 \, dt \lesssim \|u_0\|^2_{L^2} \int_{0}^{T}\|g\|_{H^r}^2  \, dt<+\infty.$$
		Now we can make energy estimate. Integrating by parts, it holds
		\begin{equation}\label{energy estimate for skeleton eq}
			\begin{split}
				\frac{1}{2} \frac{d}{dt} \|\xi\|_{L^2}^2
				&=-\big\<\xi, F_N(\|u_1\|_{H^{1}}) \Pi(u_1 \cdot \nabla u_1)- F_N(\|u_2\|_{H^{1}}) \Pi(u_2 \cdot \nabla u_2) \big\> -\big\|(-\Delta)^{\frac{\Lambda}{2}}\xi \big\|_{L^2}^2\\
				&\quad-\frac{3\nu}{5}\|\nabla \xi\|_{L^2}^2+\big\<\xi, \Pi(g\cdot \nabla \xi)\big\>.
			\end{split}
		\end{equation}
		The last term vanishes as $g$ and $\xi$ are both divergence-free. As for the first term, we can use the estimates above formula \eqref{nonlinear difference estimate} with $\delta=0$ and then apply Young's inequality to get
		\begin{equation}\label{nonlinear estimate}
			\big|\big\<\xi, F_N(\|u_1\|_{H^{1}}) \Pi(u_1 \cdot \nabla u_1)- F_N(\|u_2\|_{H^{1}}) \Pi(u_2 \cdot \nabla u_2) \big\>\big|	\lesssim  \frac{3\nu}{5} \|\xi\|_{H^1}^2+C(N,\nu) \|\xi\|_{L^2}^2 ,
		\end{equation}
		where $C(N,\nu)>0$ is a finite constant. Inserting this estimate into \eqref{energy estimate for skeleton eq}  yields
		$$\aligned
		\frac{1}{2} \frac{d}{dt} \|\xi\|_{L^2}^2 +\|\xi\|_{H^{\Lambda}}^2\lesssim C(N,\nu) \|\xi\|_{L^2}^2.
		\endaligned $$
		Hence the uniqueness follows if we apply Gr\"onwall's lemma for the above inequality.
		
		\textbf{Step 2. Existence.} Consider Galerkin approximations for equation \eqref{skeleton eq}:
		\begin{equation}\label{ske galerkin}
			\partial_t u_m+F_N(\|u_m\|_{H^{1}}) \Pi_m (u_m \cdot \nabla u_m)=-(-\Delta)^\Lambda u_m+\frac{3\nu}{5}\Delta u_m+\Pi_m(g\cdot\nabla u_m), \ u_m(0)=\Pi_m u_0,
		\end{equation}
		where $\Pi_m$ is the projection defined below \eqref{eq-LDP}. Making energy estimate for the above equation and noticing that $g$ and $u$ are both divergence-free, we get
		$$\frac{1}{2} \frac{d}{dt} \|u_m\|_{L^2}^2=-\big\|(-\Delta)^{\frac{\Lambda}{2}}u_m\big\|_{L^2}^2-\frac{3\nu}{5}\|\nabla u_m\|_{L^2}^2,$$
		which implies that for all $t\geq 0$,
		$$\|u_m(t)\|_{L^2}^2+2\int_{0}^{t} \|u_m(s)\|_{H^\Lambda}^2 \, ds+\frac{6\nu}{5}\int_{0}^{t} \|u_m(s)\|_{H^1}^2 \, ds = \|\Pi_m u_0\|_{L^2}^2 \leq \|u_0\|_{L^2}^2.$$
		Hence $\{u_m\}_{m\geq 1} \subset L^\infty([0,T];L^2) \cap L^2([0,T];H^\Lambda)$ for $\Lambda \in (1,2)$.
		
		Next, we want to prove that $\{u_m\}_m \subset W^{\alpha,2}([0,T];H^{-\beta})$ for $\alpha \in (0,\frac{1}{2})$ and $\beta>\frac{5}{2}$, which is a direct corollary of
		$$\big\|u_m(t)-u_m(s)\big\|_{H^{-\beta}} \lesssim |t-s|^{\frac{1}{2}}, \quad 0\leq s<t \leq T.$$
		As the arguments are similar to those in the proof of Lemma \ref{bdd in sobolev space}, we only give the estimate for the term associated to $g$. Since $\beta>\frac{5}{2}$ and $r \in (0,\frac{3}{2})$, for any $z \in [s,t]$, we have
		$$\big\|\Pi_m(g(z) \cdot \nabla u_m(z))\big\|_{H^{-\beta}} \leq \|g(z) \cdot \nabla u_m(z)\|_{H^{r-\frac{5}{2}}}  \lesssim \|g(z) u_m(z)\|_{H^{r-\frac{3}{2}}} \lesssim \|g(z)\|_{H^r} \|u_m(z)\|_{L^2},$$
		and therefore for $g\in L^2([0,T];H^r)$, it holds
		$$	\int_{s}^{t} \big\|\Pi_m(g(z) \cdot \nabla u_m(z))\big\|_{H^{-\beta}} \, dz
		\lesssim \|u_0\|_{L^2} \int_{s}^{t} \|g(z)\|_{H^r}  \, dz \leq \|u_0\|_{L^2}  \|g\|_{L^2([0,T];H^r)} |t-s|^{\frac{1}{2}}.$$
		Hence by the definition of fractional Sobolev norm, we have
		$$\aligned
		\|u_m\|^2_{W^{\alpha,2}([0,T];H^{-\beta})} &= \int_{0}^{T} \int_{0}^{T} \frac{\|u_m(t)-u_m(s)\|_{H^{-\beta}}^2}{|t-s|^{1+2\alpha}} \, dtds + \|u_m\|^2_{L^2([0,T];H^{-\beta})}\\
		&\lesssim \int_{0}^{T} \int_{0}^{T} \frac{1}{|t-s|^{2\alpha}} \, dtds+T \|u_0\|^2_{L^2} <+\infty.
		\endaligned $$
		Similarly, we can prove that $\{u_m\}_m \subset W^{\frac{1}{3},4}([0,T];H^{-\beta})$ for $\beta>\frac{5}{2}$.
		Applying Theorem \ref{compact embedding} (i') and (ii), we can find a subsequence $\{m_j\}_{j\geq 1}$, such that $u_{m_j}$ converges strongly to some limit $u$ in  $L^2([0,T];H^{1}) \cap C([0,T];H^{-\gamma})$ for any $\gamma \in (0,\frac{1}{2})$.
		
		Now we aim to verify that $u$ is the weak solution to \eqref{skeleton eq} in the sense of Definition \ref{def of solution of skeleton eq}. Notice that $u_{m_j}$ satisfies the following identity for any divergence-free vector field $\phi \in C^\infty$:
		$$\aligned
		\<u_{m_j}(t), \phi\>&=\<\Pi_{m_j}u_0,\phi\>+\int_{0}^{t} F_N(\|u_{m_j}(s)\|_{H^{1}}) \big\<u_{m_j}(s) \cdot \nabla \Pi_{m_j}\phi, u_{m_j}(s) \big\> \, ds\\
		& -\int_{0}^{t}\! \<u_{m_j}(s), (-\Delta)^\Lambda \phi \> \, ds +\frac{3\nu}{5}\int_{0}^{t} \! \<u_{m_j}(s), \Delta \phi\> \, ds-\int_{0}^{t}\! \big\<u_{m_j}(s) \cdot \nabla \Pi_{m_j}\phi, g(s)\big\> \, ds.
		\endaligned $$
		In fact, the convergence for the first, the third and the fourth terms on the right-hand side are trivial as $\phi$ is a smooth vector field. For the nonlinear term, the proof follows a similar approach to \eqref{nonlinear convergence J1J2} and the subsequent arguments. We mention that, for the term $J_2$ defined in \eqref{nonlinear convergence J1J2}, the estimate in \eqref{J2 estimate} requires a slight modification and now takes the form:
		$$\aligned
		\big|F_N(\|u_{m_j}\|_{H^{1}})-F_N(\|u\|_{H^{1}})\big| \lesssim \|u_{m_j}-u\|_{H^{1}}.
		\endaligned $$
		Using the fact that $u_{m_j}$ converges strongly to $u$ in $L^2([0,T];H^{1})$, $J_2$ vanishes as $j \rightarrow \infty$, and therefore we get the convergence for the nonlinear term.
		
		Finally we prove the convergence for the term associated to $g$. By the triangle inequality, for any $t\in [0,T]$, it holds (we omit the time parameter $s$ for convenience)
		$$\aligned
		&\quad \ \bigg|\int_{0}^{t} \<u_{m_j} \cdot \nabla \Pi_{m_j}\phi, g\> \, ds-\int_{0}^{t} \<u \cdot \nabla \phi, g\> \, ds\bigg|\\
		&\leq \int_{0}^{t}  \big|\big\<(u_{m_j}-u) \cdot \nabla \Pi_{m_j}\phi, g \big\>\big| \, ds+ \int_{0}^{t} \big|\big\<u \cdot \nabla (\Pi_{m_j} \phi-\phi),g \big\>\big| \, ds.
		\endaligned $$
		We will show that these two terms vanish as $j\rightarrow \infty$.  For $\gamma \in (0,\frac{1}{2})$, we have
		$$\aligned
		\int_{0}^{t}  \big|\big\<(u_{m_j}-u) \cdot \nabla \Pi_{m_j}\phi, g\big\>\big| \, ds &\leq \int_{0}^{t} \big\|(u_{m_j}-u) \cdot \nabla \Pi_{m_j}\phi \big\|_{H^{-1}} \|g\|_{H^1} \, ds\\
		&\leq \int_{0}^{t}  \|u_{m_j}-u\|_{H^{-\gamma}} \|\nabla \Pi_{m_j}\phi\|_{H^{\gamma+\frac{1}{2}}} \|g\|_{H^1} \, ds\\
		&\leq T^{\frac{1}{2}}  \|\phi\|_{H^2} \|u_{m_j}-u\|_{C([0,T];H^{-\gamma})} \|g\|_{L^2([0,T];H^1)}.
		\endaligned $$
		Since $g\in L^2([0,T];H^r)$ for any $r\in (0,\frac{3}{2})$, $\|g\|_{L^2([0,T];H^1)}$ is finite. By the condition that $u_{m_j}$ converges strongly to $u$ in $C([0,T];H^{-\gamma})$, this term tends to zero as $j \rightarrow \infty$.
		For the other term, applying H\"older's inequality and Sobolev embedding theorem, we have
		$$\aligned
		\int_{0}^{t} \big|\big\<u \cdot \nabla (\Pi_{m_j} \phi-\phi),g\big\>\big| \, ds &\leq \int_{0}^{t} \|u\|_{L^2} \big\|\nabla(\Pi_{m_j} \phi-\phi)\big\|_{L^4} \|g\|_{L^4} \, ds\\
		&\leq \int_{0}^{t} \|u\|_{L^2} \big\|\nabla(\Pi_{m_j} \phi-\phi)\big\|_{H^{\frac{3}{4}}} \|g\|_{H^{\frac{3}{4}}} \, ds\\
		&\lesssim T^{\frac{1}{2}}\|u_0\|_{L^2} \|\Pi_{m_j}\phi-\phi\|_{H^\frac{7}{4}} \|g\|_{L^2([0,T];H^{\frac{3}{4}})},
		\endaligned $$
		which also vanishes as $j\rightarrow \infty$ since $\phi$ is smooth. Summarizing the above discussions, we have proved that $u$ is a weak solution to equation \eqref{skeleton eq} and the proof of Lemma \ref{well posedness for skeleton eq} is done.
	\end{proof}

	Now we can state the following result which will help us to verify the first condition of Hypothesis \ref{LDP hypothesis}.
	
	\begin{lemma}\label{LDP convergence lemma}
		Suppose $u^i$ is a solution to skeleton equation \eqref{skeleton eq} associated to data $u_0^i \in L^2$ and $g^i \in L^2([0,T];H^r)$, where $i=1,2$ and $r\in (0,\frac{3}{2})$. If $\Lambda+r>\frac{5}{2}$, it holds
		$$ \|u^1-u^2\|_{L^\infty([0,T];L^2)} +\|u^1-u^2\|_{L^2([0,T];H^\Lambda)} \lesssim \|u_0^1-u_0^2\|_{L^2}+\|u^2_0\|_{L^2} \|g^1-g^2\|_{L^2([0,T];H^r)}.$$
		Besides, given sequences $u_0^n \rightharpoonup u_0$ and $g^n \rightharpoonup g$, the associated solutions $u^n$ converges weakly to $u$ in $L^2([0,T];H^\Lambda)$, as well as strongly in $C([0,T];H^{-\lambda}) \cap L^2([0,T];L^2)$ for any $\lambda>0$.
	\end{lemma}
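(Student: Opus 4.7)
For the stability estimate, let $\xi:=u^1-u^2$. The plan is to derive an energy identity by adapting Step~1 of Lemma \ref{well posedness for skeleton eq} to the case of distinct data: the Lions--Magenes condition \eqref{Lions-Magenes} is verified as there, with the extra estimate $\|\Pi(g^i\cdot\nabla u^i)\|_{H^{-\Lambda}}\lesssim \|g^i\|_{H^r}\|u^i\|_{H^{5/2-\Lambda-r}}$ being finite thanks to $\Lambda+r>\tfrac{5}{2}$. Splitting the forcing difference as
$$\Pi(g^1\cdot\nabla u^1)-\Pi(g^2\cdot\nabla u^2)=\Pi(g^1\cdot\nabla \xi)+\Pi((g^1-g^2)\cdot\nabla u^2),$$
the first summand contributes $\<\xi,\Pi(g^1\cdot\nabla\xi)\>=0$ by the divergence-free property of $g^1$. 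For the second, an integration by parts gives $|\<\xi,(g^1-g^2)\cdot\nabla u^2\>|=|\<u^2,(g^1-g^2)\cdot\nabla\xi\>|$, and Lemma \ref{HHH} with $s_1=r$, $s_2=\Lambda-1$ (both in $(0,\tfrac{3}{2})$, with $s_1+s_2>\tfrac{3}{2}$) yields the bilinear bound $\|(g^1-g^2)\cdot\nabla\xi\|_{L^2}\lesssim \|g^1-g^2\|_{H^r}\|\xi\|_{H^\Lambda}$. Cauchy--Schwarz followed by Young's inequality then absorbs a small multiple of $\|\xi\|_{H^\Lambda}^2$ into the hyperviscous dissipation, leaving a remainder of the form $C\|u^2\|_{L^2}^2\|g^1-g^2\|_{H^r}^2\le C\|u_0^2\|_{L^2}^2\|g^1-g^2\|_{H^r}^2$ after using the energy bound. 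Combining with the nonlinear estimate \eqref{nonlinear estimate} and invoking Gr\"onwall's lemma yields the announced bound.

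For the convergence assertion, note that weak convergence of $\{u_0^n\}$ in $L^2$ and of $\{g^n\}$ in $L^2([0,T];H^r)$ implies uniform bounds in the respective norms, whence the energy identity gives $\{u^n\}$ uniformly bounded in $L^\infty([0,T];L^2)\cap L^2([0,T];H^\Lambda)$. Repeating the time-increment computations of Lemma \ref{bdd in sobolev space} for the skeleton equation---handling the forcing by $\|g^n\cdot\nabla u^n\|_{H^{-\beta}}\lesssim \|g^n\|_{H^r}\|u^n\|_{L^2}$ for $\beta>\tfrac{5}{2}$---produces uniform bounds in $W^{\alpha,2}([0,T];H^{-\beta})$ and $W^{\frac{1}{3},4}([0,T];H^{-\beta})$. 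Theorem \ref{compact embedding}(i') and (ii) then yield a subsequence $\{u^{n_k}\}$ converging strongly in $L^2([0,T];H^1)\cap C([0,T];H^{-\gamma})$ and weakly in $L^2([0,T];H^\Lambda)$ to some $\tilde u$. Passing to the limit in the weak formulation proceeds as in Step~2 of Lemma \ref{well posedness for skeleton eq}; the only new ingredient is the convergence of $\int_0^t\<u^{n_k}\cdot\nabla\phi,g^{n_k}\>\,ds$, handled by weak-strong pairing, since $u^{n_k}\cdot\nabla\phi\to \tilde u\cdot\nabla\phi$ strongly in $L^2([0,T];L^2)\subset L^2([0,T];H^{-r})$ while $g^{n_k}\rightharpoonup g$ weakly in $L^2([0,T];H^r)$. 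Uniqueness from Lemma \ref{well posedness for skeleton eq} forces $\tilde u=u$, and a standard subsubsequence argument promotes the convergence to the full sequence.

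The main obstacle in both parts is the treatment of the transport-type forcing by $g$. The hypothesis $\Lambda+r>\tfrac{5}{2}$ is precisely what allows Lemma \ref{HHH} to put $(g^1-g^2)\cdot\nabla\xi$ (and $g^n\cdot\nabla u^n$) into an $L^2$-type space that can be absorbed by the hyperviscous dissipation. A secondary subtlety is that, unlike Section \ref{sec-wellposedness} where the cut-off uses the weaker $H^{1-\delta}$-norm, here $F_N$ involves $\|\cdot\|_{H^1}$; consequently, passing to the limit in $F_N(\|u^{n_k}\|_{H^1})\<u^{n_k}\cdot\nabla\phi,u^{n_k}\>$ requires strong convergence in $L^2([0,T];H^1)$, which is exactly the gain from the compact embedding of Theorem \ref{compact embedding}(i'), available thanks to $\Lambda\in(1,2)$.
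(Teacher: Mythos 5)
Your proof is correct and, for the stability estimate, essentially identical to the paper's: same splitting of the forcing difference, the same cancellation of $\<\xi,\Pi(g^1\cdot\nabla\xi)\>$, and the same product estimate under $\Lambda+r>\tfrac52$ (the paper pairs $\nabla\xi\in H^{\Lambda-1}$ against $u^2(g^1-g^2)\in H^{1-\Lambda}$ rather than $u^2\in L^2$ against $(g^1-g^2)\cdot\nabla\xi\in L^2$, but this is the same duality/Lemma \ref{HHH} computation), followed by Young and Gr\"onwall. For the convergence assertion the routes differ slightly in the compactness device: the paper establishes a uniform H\"older bound in $C^{1/2}([0,T];H^{-\Lambda})$ by estimating time increments termwise and then invokes Aubin--Lions to get strong convergence in $C([0,T];H^{-\lambda})\cap L^2([0,T];L^2)$, leaving the limit passage in the weak formulation to ``standard arguments''; you instead rerun the fractional Sobolev bounds of Lemma \ref{bdd in sobolev space} and apply Theorem \ref{compact embedding}(i') and (ii), which has the advantage of delivering strong convergence in $L^2([0,T];H^1)$ directly, exactly what is needed to pass to the limit in the cut-off $F_N(\|u^n\|_{H^1})$ (with the paper's route one must recover this by interpolating the strong $L^2_tL^2_x$ convergence against the uniform $L^2_tH^\Lambda_x$ bound). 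Your explicit weak--strong pairing for the $g^n$ term and the subsubsequence argument are both sound; both compactness routes are valid and lead to the stated conclusion.
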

	
	\begin{proof}
		We begin with proving the first estimate. As $u^1$, $u^2$ are both solutions to equation \eqref{skeleton eq} associated to $g^1$ and $g^2$ respectively, we have
		$$\aligned
		\partial_t(u^1-u^2)=&-\big[F_N(\|u^1\|_{H^{1}}) \Pi(u^1\cdot \nabla u^1)-F_N(\|u^2\|_{H^{1}}) \Pi(u^2\cdot \nabla u^2)\big]-(-\Delta)^\Lambda (u^1-u^2)\\
		&+\frac{3\nu}{5} \Delta(u^1-u^2)+\Pi(g^1\cdot \nabla u^1-g^2\cdot\nabla u^2).
		\endaligned $$
		Recalling that we have verified in \eqref{Lions-Magenes} that energy equality is valid, it holds
		$$\aligned
		\frac{1}{2} \frac{d}{dt} \big\|u^1-u^2\big\|_{L^2}^2&=-\big\< u^1-u^2, F_N(\|u^1\|_{H^{1}}) \Pi(u^1\cdot \nabla u^1)-F_N(\|u^2\|_{H^{1}}) \Pi(u^2\cdot \nabla u^2)\big\>\\
		&-\big\|u^1-u^2\big\|_{H^\Lambda}^2-\frac{3\nu}{5} \big\|\nabla(u^1-u^2)\big\|_{L^2}^2+\big\<u^1-u^2, \Pi(g^1\cdot \nabla u^1-g^2\cdot\nabla u^2)\big\>.
		\endaligned $$
		For the first term,  \eqref{nonlinear estimate} implies
		$$\aligned
		&\quad \big|\big\< u^1-u^2, F_N(\|u^1\|_{H^{1}}) \Pi(u^1\cdot \nabla u^1)-F_N(\|u^2\|_{H^{1}}) \Pi(u^2\cdot \nabla u^2)\big\>\big| \\
		&\lesssim \frac{3\nu}{5} \big\|u^1-u^2\big\|_{H^1}^2+C(\nu,N) \big\|u^1-u^2\big\|_{L^2}^2.
		\endaligned $$
		So we only estimate the last term concerning $g$. The triangle inequality yields
		$$\aligned
		&\ \quad\big|\big\<u^1-u^2, \Pi(g^1\cdot \nabla u^1-g^2\cdot\nabla u^2)\big\>\big|\\
		&\leq \big|\big\<u^1-u^2, \Pi(g^1 \cdot \nabla (u^1-u^2))\big\>\big|+\big|\big\<u^1-u^2, \Pi((g^1-g^2)\cdot \nabla u^2)\big\>\big|.
		\endaligned $$
		As $g^1$ and $u^i, i=1,2$ are divergence-free, the former part vanishes. Besides, for $\Lambda+r>\frac{5}{2}$,
		$$\aligned
		\big|\big\<u^1-u^2, \Pi((g^1-g^2)\cdot \nabla u^2)\big\>\big|&=\big|\big\<\nabla(u^1-u^2) , u^2 (g^1-g^2)\big\>\big|\\
		&\leq \big\|\nabla(u^1-u^2)\big\|_{H^{\Lambda-1}} \big\|u^2 (g^1-g^2)\big\|_{H^{1-\Lambda}} \\
		&\lesssim \big\|u^1-u^2\big\|_{H^{\Lambda}} \|u^2\|_{H^{\frac{5}{2}-\Lambda-r}} \big\|g^1-g^2\big\|_{H^r}\\
		&\leq \frac{1}{2} \big\|u^1-u^2\big\|_{H^\Lambda}^2+c \, \|u^2_0\|_{L^2}^2 \big\|g^1-g^2\big\|_{H^r}^2,
		\endaligned $$
		where $c>0$ is a finite constant. Summarizing the above discussions, we get
		$$\frac{1}{2} \frac{d}{dt} \big\|u^1-u^2\big\|_{L^2}^2 +\frac{1}{2}\big\|u^1-u^2\big\|_{H^\Lambda}^2 \lesssim C(\nu,N) \big\|u^1-u^2\big\|_{L^2}^2+c \, \|u^2_0\|_{L^2}^2 \big\|g^1-g^2\big\|_{H^r}^2.$$
		Applying Gr\"onwall's inequality and then integrating the above inequality with respect to time, we get the first desired estimate.
		
		Now we turn to prove the next convergence result. For brevity, we will use the notation $ C^0_t L^2_x$ to represent
		$C([0,T];L^2)$, and similar notations  will be employed for  $L^2_t H^\Lambda_x$, $C_t^\frac{1}{2} H^{-\Lambda}_x$, etc. By Lemma \ref{well posedness for skeleton eq}, for given data $u_0^n$ and $g^n$, the corresponding weak solution $u^n$ to \eqref{skeleton eq} belongs to the space $ C^0_t L^2_x \cap L^2_t H^\Lambda_x$, and we aim to prove it belongs to $ C_t^\frac{1}{2} H_x^{-\Lambda}$ as well. For any $0\leq s<t \leq T$ and $\Lambda \in (1,2)$, we integrate equation \eqref{skeleton eq} with respect to time from $s$ to $t$ and then take $H^{-\Lambda}$-norm to get
		$$\aligned
		\big\|u^n(t)-u^n(s)\big\|_{H^{-\Lambda}}  \leq &\int_{s}^{t} \big\|F_N(\|u^n(z)\|_{H^1}) \Pi\big(u^n(z)\cdot\nabla u^n(z)\big)\big\|_{H^{-\Lambda}} \, dz +\int_{s}^{t} \|u^n(z)\|_{H^{\Lambda}} \, dz \\
		&+ \frac{3\nu}{5} \int_{s}^{t} \|u^n(z)\|_{H^{-\Lambda+2}} \, dz+ \int_{s}^{t} \big\| \Pi \big(g^n(z) \cdot \nabla u^n(z)\big) \big\|_{H^{-\Lambda}} \, dz.
		\endaligned $$
		We estimate each term respectively. For the nonlinear term, by interpolation inequality and $F_N(\|u^n\|_{H^1}) \leq \frac{N}{\|u^n\|_{H^1}}$, it holds
		\begin{equation}\label{nonlinear H-1 norm}
			\big\|F_N(\|u^n\|_{H^1})  \Pi(u^n \cdot \nabla u^n)\big\|_{H^{-1}} \leq F_N(\|u^n\|_{H^1}) \|u^n\|_{H^{\frac{1}{2}}} \|\nabla u^n\|_{L^2} \leq N \|u^n\|_{H^1}^\frac{1}{2} \|u^n\|_{L^2}^\frac{1}{2};
		\end{equation}
		hence we can use H\"older's inequality to obtain
		$$\aligned
		\int_{s}^{t}\big\|F_N(\|u^n(z)\|_{H^1}) \Pi\big(u^n(z)\cdot\nabla u^n(z)\big)\big\|_{H^{-\Lambda}} \, dz \leq N \|u^n_0\|_{L^2}^\frac{1}{2} \int_{s}^{t} \|u^n(z)\|_{H^1}^\frac{1}{2} \, dz \lesssim N \|u^n_0\|_{L^2} |t-s|^\frac{3}{4}.
		\endaligned $$
		For the second and the third terms,  H\"older's inequality and energy estimate yield
		$$\int_{s}^{t} \|u^n(z)\|_{H^{\Lambda}} \, dz \lesssim \|u^n_0\|_{L^2} |t-s|^\frac{1}{2};$$
		and for $\Lambda \in (1,2)$, it holds
		$$\frac{3\nu}{5} \int_{s}^{t} \|u^n(z)\|_{H^{-\Lambda+2}} \, dz \lesssim \nu \int_{s}^{t} \|u^n(z)\|_{H^{\Lambda}} \, dz \lesssim \nu \|u^n_0\|_{L^2} |t-s|^\frac{1}{2}.$$
		For the last term corresponding to $g^n$, as $\Lambda+r>\frac{5}{2}$, it holds
		$$\big\|\Pi\big(g^n(z) \cdot \nabla u^n(z)\big) \big\|_{H^{-\Lambda}}  \lesssim \|g^n(z)u^n(z)\|_{H^{1-\Lambda}} \lesssim \|g^n(z)\|_{H^r} \|u^n(z)\|_{H^{\frac{5}{2}-\Lambda-r}} \lesssim \|g^n(z)\|_{H^r} \|u^n_0\|_{L^2},$$
		and therefore for $g^n\in L^2_t H^r_x$, we have
		$$\int_{s}^{t} \big\| \Pi \big(g^n(z) \cdot \nabla u^n(z)\big) \big\|_{H^{-\Lambda}} \, dz \lesssim \|u^n_0\|_{L^2} \int_{s}^{t}
		\|g^n(z)\|_{H^r}  \, dz \leq \|u^n_0\|_{L^2} \|g^n\|_{L^2_t H^r_x} |t-s|^\frac{1}{2}.$$
		Summarizing the above discussions, for any $0 \leq s<t \leq T$, we arrive at
		$$\big\|u^n(t)-u^n(s)\big\|_{H^{-\Lambda}} \lesssim N \|u^n_0\|_{L^2} |t-s|^\frac{3}{4}+ (1+\nu) \|u^n_0\|_{L^2} |t-s|^\frac{1}{2}+\|u^n_0\|_{L^2} \|g^n\|_{L^2_t H^r_x} |t-s|^\frac{1}{2}.$$
		Hence, given $u_0^n$ and $g^n$, the associated solutions $u^n$ satisfies uniform bounds in $C_t^\frac{1}{2} H^{-\Lambda}_x \cap C^0_t L^2_x \cap L^2_t H^\Lambda_x$ for $\Lambda \in (1,2)$. By Aubin-Lions Lemma, $\{u^n\}_n$ is sequentially compact in $C^0_t H_x^{-\lambda} \cap L^2_t L^2_x$ for any $\lambda>0$, implying that $u^n$ converges strongly to some limit $u$ in this space. Finally, we can show that the limit $u$ is a weak solution to \eqref{skeleton eq} by standard arguments.
	\end{proof}

	Thanks to Lemma \ref{well posedness for skeleton eq}, now we can claim that the solution map $\mathcal{G}^0$ defined above Theorem \ref{LDP thm} is meaningful, and therefore the unique solution $u\in L^\infty_t L^2_x \cap L^2_t H^1_x$ to skeleton equation \eqref{skeleton eq} can be written as $u=\mathcal{G}^0(u_0, \text{Int}(g))$, where $u_0 \in L^2$ and $g\in L^2_t H_x^r$. Then we can further define the candidate rate function
	\begin{equation}\label{rate function}
		I_{u_0} (u)=\inf_{\{g\in L^2_t H_x^r:\, u=\mathcal{G}^0(u_0, \text{Int}(g))\}} \bigg\{ \frac{1}{2} \int_{0}^{T} \|g(s) \|_{H^r}^2 \, ds \bigg\}.
	\end{equation}
	Here we assume that $\inf \emptyset=+\infty$. In the sequel, $\mathcal{E}_0^L$ and $\mathcal{E}^{T,L}$ defined in \eqref{ETL space} will stand for $\mathcal{E}_0$ and $\mathcal{E}$ in Hypothesis \ref{LDP hypothesis}, respectively.
	As $\mathcal{G}^0$ is a continuous map due to Lemma \ref{LDP convergence lemma}, we claim
	\begin{lemma}\label{hypothesis 1}
		For any $M<\infty$ and compact set $\mathcal{K} \subset \mathcal{E}_0^L$, the following set is a compact subset of $\mathcal{E}^{T,L}$:
		$$\Gamma_{\mathcal{K},M}:=\big\{\mathcal{G}^0(u_0, \text{Int}(g)):u_0\in \mathcal{K}, \, g\in  S^M(H^r)\big\}.$$
	\end{lemma}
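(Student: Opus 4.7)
The plan is to show sequential compactness: given any sequence $\{u^n\}_n \subset \Gamma_{\mathcal{K},M}$ with $u^n = \mathcal{G}^0(u_0^n, \mathrm{Int}(g^n))$, $u_0^n \in \mathcal{K}$, $g^n \in S^M(H^r)$, I will extract a subsequence converging in $\mathcal{E}^{T,L}$ to an element of $\Gamma_{\mathcal{K},M}$. The heavy lifting has already been done in Lemma \ref{LDP convergence lemma}; the main task is to prepare converging data to which that lemma applies.

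First, since $\mathcal{K}$ is compact in $\mathcal{E}_0^L$ (topologized by $H^{-\lambda}$), pass to a subsequence so that $u_0^n \to u_0$ in $H^{-\lambda}$ with $u_0 \in \mathcal{K}$. The uniform bound $\|u_0^n\|_{L^2} \leq L$ yields, along a further subsequence, weak convergence $u_0^n \rightharpoonup \widetilde{u}_0$ in $L^2$; uniqueness of limits in $H^{-\lambda}$ forces $\widetilde{u}_0 = u_0$, so $u_0^n \rightharpoonup u_0$ weakly in $L^2$. Second, $S^M(H^r)$ is a closed ball of the separable Hilbert space $L^2([0,T];H^r)$, hence weakly sequentially compact and metrizable; along a further subsequence $g^n \rightharpoonup g$ weakly in $L^2([0,T];H^r)$, with $g \in S^M(H^r)$ by weak lower semi-continuity of the norm.

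Now Lemma \ref{LDP convergence lemma} applies to the weakly converging data $(u_0^n, g^n) \rightharpoonup (u_0, g)$, producing $u^n \to u := \mathcal{G}^0(u_0, \mathrm{Int}(g))$ strongly in $C([0,T];H^{-\lambda}) \cap L^2([0,T];L^2)$; in particular the convergence holds in the $C([0,T];H^{-\lambda})$-topology that defines $\mathcal{E}^{T,L}$. It remains to check $u \in \mathcal{E}^{T,L}$: the energy identity for \eqref{skeleton eq} (in which the drift term vanishes because both $g$ and $u$ are divergence-free) gives $\sup_{t\in[0,T]}\|u(t)\|_{L^2} \leq \|u_0\|_{L^2} \leq L$, and weak continuity in $L^2$ follows from strong continuity in $H^{-\lambda}$, uniform boundedness in $L^\infty([0,T];L^2)$, and density of smooth test functions in $L^2$. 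Since $u_0 \in \mathcal{K}$ and $g \in S^M(H^r)$, we conclude $u \in \Gamma_{\mathcal{K},M}$.

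I do not expect a serious obstacle here: the compactness properties of $\mathcal{K}$ and $S^M(H^r)$ are standard, and the continuity of $\mathcal{G}^0$ in the required topologies is exactly the content of Lemma \ref{LDP convergence lemma}. The only mildly delicate point is bridging the $H^{-\lambda}$-compactness of $\mathcal{K}$ to the weak $L^2$-convergence hypothesis needed by Lemma \ref{LDP convergence lemma}, but this is resolved by the uniform $L^2$-bound built into the definition of $\mathcal{E}_0^L$.
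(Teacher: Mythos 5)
Your proposal is correct and follows essentially the same route as the paper, which simply asserts the lemma as an immediate consequence of the continuity of $\mathcal{G}^0$ established in Lemma \ref{LDP convergence lemma} (continuous image of a compact set); your write-up just makes explicit the subsequence extractions (weak compactness of the ball $S^M(H^r)$, upgrading $H^{-\lambda}$-convergence of $u_0^n$ to weak $L^2$-convergence via the uniform bound $L$) and the verification that the limit lies in $\mathcal{E}^{T,L}$, all of which the paper leaves implicit.
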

	
	Next, we turn to verify the second condition of Hypothesis \ref{LDP hypothesis}. For $n \geq 1$, consider the following sequence of stochastic equations with initial value $\{u^n_0\}_n \subset L^2$ and $\{g^n\}_n \subset L^2_t H^r_x$:
	\begin{equation}\nonumber
		\begin{split}
			du^n+F_N(\|u^n\|_{H^1}) \Pi(u^n \cdot \nabla u^n) \, dt=-(-\Delta)^\Lambda u^n \, dt+  \Pi \Big[\circ d\Pi_n\big(\sqrt{\varepsilon_n}  \, W^r+\text{Int}(g^n) \big) \cdot \nabla u^n\Big],
		\end{split}
	\end{equation}
	where $\Lambda \in (1,2)$, $W^r$ is the space-time noise defined above \eqref{eq-LDP}, $d\Pi_n (\text{Int}(g^n))=\Pi_n g^n \, dt$ and $\varepsilon_n$ is the constant stated in \eqref{thetaR expression}, which vanishes  as  $n\rightarrow \infty$. Equation \eqref{2nd skeleton eq} can be written in the equivalent It\^o form:
	\begin{equation}\label{2nd skeleton eq}
		\begin{split}
			du^n+F_N(\|u^n\|_{H^1}) \Pi(u^n \cdot \nabla u^n) \, dt=&-(-\Delta)^\Lambda u^n \, dt+S_n(u^n) \, dt\\
			&+ \Pi \Big[ d\Pi_n\big(\sqrt{\varepsilon_n}  \, W^r+\text{Int}(g^n) \big) \cdot \nabla u^n\Big],
		\end{split}
	\end{equation}
	where  the It\^o-Stratonovich corrector has the following explicit form:
	$$S_n(u^n)=\frac{3\nu}{2}\varepsilon_n \sum_{|k|\leq n,i} |k|^{-2r}\Pi \big[\sigma_{k,i}\cdot \nabla \Pi(\sigma_{-k,i} \cdot \nabla u^n)\big].$$
	
	Thanks to the existence of $\Pi_n$, there are only finitely many Brownian motions. Based on the arguments in the proofs of Theorem \ref{well-posedness thm} and Lemma \ref{well posedness for skeleton eq}, we can use similar methods to prove that, for any fixed $n \geq 1$, equation \eqref{2nd skeleton eq} has a pathwise unique weak solution $u^n$. Noting that we have defined solution map $\mathcal{G}^n$ associated to equation \eqref{eq-LDP} above Theorem \ref{LDP thm}, the solution $u^n$ to \eqref{2nd skeleton eq} can be expressed in a similar way:
	$$u^n=\mathcal{G}^n\big(u_0^n, \sqrt{\varepsilon_n} \, W^r+\text{Int}(g^n)\big).$$
	
	Recall the spaces we defined in \eqref{ETL space} and Section \ref{subsec-weak convergence method}. The following lemma implies that the second condition of Hypothesis \ref{LDP hypothesis} holds.
	\begin{lemma}\label{hypothesis 2}
		Let $\{g^n\}_n \subset \mathcal{P}_2^M(H^r)$ and $\{u_0^n\}_n \subset \mathcal{E}_0^L$. Assume that, as $n\rightarrow \infty$, $u_0^n \rightharpoonup u_0$ in $L^2$ and $g^n$ converge in law, as $S^M(H^r)$-valued random elements, to $g$. Then the convergence
		$$\mathcal{G}^n\big(u_0^n, \sqrt{\varepsilon_n} \, W^r+\text{Int}(g^n)\big) \rightarrow \mathcal{G}^0\big(u_0, \text{Int}(g)\big)$$
		holds in law, in the topology of $\mathcal{E}^{T,L}$.
	\end{lemma}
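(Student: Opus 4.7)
The strategy is the standard compactness/Skorohod scheme adapted to the weak convergence method. Write $u^n=\mathcal{G}^n(u_0^n,\sqrt{\varepsilon_n}W^r+\text{Int}(g^n))$, so that $u^n$ solves \eqref{2nd skeleton eq}. The plan is: (i) derive uniform-in-$n$ bounds for $u^n$ in $L^\infty_t L^2_x\cap L^2_t H^\Lambda_x$ and in a fractional Sobolev space in time; (ii) deduce tightness of the joint law of $(u^n,g^n,\sqrt{\varepsilon_n}W^r)$ on a path space where the first marginal lies in $L^2_t H^1_x\cap C_t^0 H_x^{-\gamma}$, the second in $S^M(H^r)$ with the weak topology, and the third in $C_t^0 U$; (iii) apply Skorohod's representation to obtain almost sure convergence along a subsequence on a new probability space; (iv) pass to the limit in the equation and identify the limit as the unique solution $\mathcal{G}^0(u_0,\text{Int}(g))$ of the skeleton equation. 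Since $\mathcal{G}^0$ is well defined and deterministic-valued, uniqueness will force the whole sequence to converge in law.

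For step (i), I would mimic Lemma \ref{finite energy estimate}: applying It\^o's formula to $\|u^n\|_{L^2}^2$ and using that $\sigma_{k,i}$ is divergence-free together with the cancellation $2\langle u^n,S_n(u^n)\rangle+3\nu\varepsilon_n\sum_{|k|\le n,i}|k|^{-2r}\|\Pi(\sigma_{k,i}\cdot\nabla u^n)\|_{L^2}^2\le 0$, plus the fact that $\langle u^n,\Pi(\Pi_n g^n\cdot\nabla u^n)\rangle=0$ by divergence-freeness of $\Pi_n g^n$, we get
\begin{equation*}
\sup_{t\in[0,T]}\|u^n(t)\|_{L^2}^2+2\int_0^T\|u^n(s)\|_{H^\Lambda}^2\,ds\le \|u_0^n\|_{L^2}^2+M_t^n,
\end{equation*}
where $M_t^n$ is a martingale whose quadratic variation is of order $\varepsilon_n$ and vanishes in expectation with $n$. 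For the time regularity, the arguments of Lemma \ref{bdd in sobolev space} extend: the nonlinear term, the hyperviscous term, and the corrector are controlled uniformly as before, the stochastic term carries a factor $\sqrt{\varepsilon_n}$, and the new drift $\Pi(\Pi_n g^n\cdot\nabla u^n)$ is bounded in $H^{-\beta}$ for $\beta>5/2$ by $\|g^n\|_{H^r}\|u^n\|_{L^2}$ as in the proof of Lemma \ref{well posedness for skeleton eq}, whose time-integrability follows from $g^n\in\mathcal{P}_2^M$. This gives uniform bounds in $W^{\alpha,2}_t H_x^{-\beta}$ and $W^{1/3,4}_t H_x^{-\beta}$, so Theorem \ref{compact embedding}~(i')-(ii) yields tightness of the law of $u^n$ on $L^2_t H_x^1\cap C_t^0 H_x^{-\gamma}$.

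For step (iii)-(iv), applying Skorohod produces $(\tilde u^n,\tilde g^n,\tilde W^n)\to(\tilde u,\tilde g,\tilde W)$ a.s. on a new probability space, with $\tilde u^n\to\tilde u$ strongly in $L^2_t H_x^1\cap C_t^0 H_x^{-\gamma}$ and $\tilde g^n\rightharpoonup\tilde g$ in $L^2_t H_x^r$. Passing to the limit term-by-term in the weak formulation of \eqref{2nd skeleton eq} tested against a smooth divergence-free $\phi$, the initial condition, the linear dissipative term and the nonlinearity with cut-off are handled exactly as in Section \ref{subsec-exist} (using that now $\tilde u^n\to\tilde u$ strongly in $L^2_tH_x^1$, the estimate on $|F_N(\|\tilde u^n\|_{H^1})-F_N(\|\tilde u\|_{H^1})|$ from Lemma \ref{cut-off function estimate} applies); the It\^o stochastic integral against $\sqrt{\varepsilon_n}\tilde W^n$ vanishes in $L^2(\tilde\P)$ by It\^o isometry and $\varepsilon_n\to 0$; and the corrector $S_n(\tilde u^n)$ converges to $\frac{3\nu}{5}\Delta\tilde u$ by Theorem \ref{S limit thm} together with the $L^2_t L^2_x$-convergence of $\tilde u^n$.

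The main obstacle is the passage to the limit in the control term $\int_0^t\langle\tilde u^n(s),\Pi_n\tilde g^n(s)\cdot\nabla\phi\rangle\,ds$, because $\tilde g^n$ converges only weakly in $L^2_t H_x^r$. The trick is to write this, after integration by parts, as $-\int_0^t\langle\tilde u^n\cdot\nabla\phi,\Pi_n\tilde g^n\rangle\,ds$ and use that $\tilde u^n\cdot\nabla\phi\to\tilde u\cdot\nabla\phi$ strongly in $L^2_t L^2_x$ (which embeds into $L^2_t H_x^{-r}$ since $r>0$), combined with the weak convergence of $\Pi_n\tilde g^n$ to $\tilde g$ in $L^2_t H_x^r$; strong-weak pairing then gives convergence to $-\int_0^t\langle\tilde u\cdot\nabla\phi,\tilde g\rangle\,ds$. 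Once this is established the limit $\tilde u$ is a weak solution of the skeleton equation \eqref{skeleton eq} with data $(u_0,\tilde g)$, hence equals $\mathcal{G}^0(u_0,\text{Int}(\tilde g))$ by the uniqueness established in Lemma \ref{well posedness for skeleton eq}. Because the limit is deterministic conditional on $\tilde g$ and the joint law of $(u_0^n,g^n)$ converges to that of $(u_0,g)$, this identifies the limit in law and completes the proof.
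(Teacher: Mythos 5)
Your proposal is correct and follows the same overall scheme as the paper: uniform bounds, tightness, Skorohod representation, term-by-term passage to the limit, and identification of the limit via the uniqueness part of Lemma \ref{well posedness for skeleton eq}. The main difference lies in the tightness step. The paper proves the H\"older-type bound $\E\,\|u^n_t-u^n_s\|_{H^{-\rho}}\lesssim |t-s|^{1/2}$ and applies Simon's criterion to obtain tightness only in $C^0_t H^{-\lambda}_x$, the topology of $\mathcal{E}^{T,L}$; you instead extend the fractional Sobolev estimates of Lemma \ref{bdd in sobolev space} and invoke Theorem \ref{compact embedding} (i')--(ii) to get tightness in the stronger space $L^2_t H^1_x\cap C^0_t H^{-\gamma}_x$. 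Your choice is arguably the more self-contained one: passing to the limit in the cut-off nonlinearity requires the bound $|F_N(\|u^n\|_{H^1})-F_N(\|u\|_{H^1})|\lesssim_N\|u^n-u\|_{H^1}$ and hence strong $L^2_t H^1_x$ convergence, which the paper secures only implicitly by referring back to the Galerkin argument in Lemma \ref{well posedness for skeleton eq}. For the control term you use a single strong--weak pairing ($\tilde u^n\cdot\nabla\phi\to\tilde u\cdot\nabla\phi$ strongly in $L^2_t H^{-r}_x$ against $\Pi_n\tilde g^n\rightharpoonup\tilde g$ in $L^2_t H^r_x$), where the paper splits the difference into three pieces treating $\Pi_n g^n-\Pi_n g$ and $\Pi_n g-g$ separately; the two arguments are equivalent, though you should record explicitly that $\Pi_n\tilde g^n\rightharpoonup\tilde g$ follows from the uniform $S^M$ bound together with $\Pi_n\psi\to\psi$ for test functions $\psi$. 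One small imprecision: in the energy identity the martingale term is not merely ``of order $\varepsilon_n$''---it vanishes identically, since $\<u^n,\Pi(\sigma_{k,i}\cdot\nabla u^n)\>=0$ by the divergence-free property, so the energy inequality is pathwise with no stochastic remainder; this does not affect the rest of your argument.
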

	
	To prove this lemma, we first give an estimate for the martingale term
	$$M_t^n=\sqrt{\varepsilon_n} \int_{0}^{t} \Pi \big(d(\Pi_n W^r) \cdot \nabla u^n_s\big)=\sqrt{\frac{3\nu\varepsilon_n}{2} } \sum_{|k|\leq n,i} |k|^{-r} \int_{0}^{t} \Pi\big(\sigma_{k,i} \cdot \nabla u^n_s\big) \, dW^{k,i}_s.$$	
	
	\begin{lemma}\label{martingale estimate}
		For any $T>0$, $p\in [1,\infty)$, $\rho>\frac{5}{2}$ and $\eta \in (0, \frac{1}{2})$, we can find a finite constant $C>0$ depending on $T,\nu,\rho, \eta, p$ but independent of $n\geq 1$, such that
		$$\E \|M^n\|_{C^\eta H^{-\rho}}^{2p} \leq C \varepsilon_n^p L^{2p}.$$
	\end{lemma}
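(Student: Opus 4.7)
The plan is a Burkholder-Davis-Gundy (BDG) estimate on two-point increments of $M^n$ in $H^{-\rho}$, followed by Kolmogorov's continuity criterion, with the spatial sum handled by the Fourier bookkeeping already used in the proof of Lemma~\ref{bdd in sobolev space}.

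First, for an auxiliary exponent $q \in [1,\infty)$ to be fixed later, BDG gives, for all $0\le s < t\le T$,
$$\E \, \bigl\|M^n_t-M^n_s\bigr\|_{H^{-\rho}}^{2q} \lesssim \varepsilon_n^q\, \nu^q\, \E\!\left[\biggl(\sum_{|k|\le n,\,i} |k|^{-2r}\int_s^t \bigl\|\Pi(\sigma_{k,i}\cdot\nabla u^n_z)\bigr\|_{H^{-\rho}}^2\,dz\biggr)^{\! q}\right].$$
Since $\sigma_{k,i}$ is divergence-free, $\|\Pi(\sigma_{k,i}\cdot\nabla u^n_z)\|_{H^{-\rho}} \le \|\sigma_{k,i} u^n_z\|_{H^{-\rho+1}}$, and then, using $|k|^{-2r}\le 1$ together with the Fourier computation from Lemma~\ref{bdd in sobolev space} (valid because $\rho>\tfrac{5}{2}$), the integrand is controlled by $\|u^n_z\|_{L^2}^2$.

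Second, I need a deterministic $L^2$ bound on $u^n$. An energy identity for \eqref{2nd skeleton eq} shows that the nonlinear term vanishes as usual; the stochastic and deterministic transport contributions both vanish because $\Pi_n W^r$ and $\Pi_n g^n$ take values in divergence-free vector fields; and $2\<u^n, S_n(u^n)\>$ cancels exactly against the It\^o correction arising from the quadratic variation of the stochastic transport (cf.\ Lemma~\ref{finite energy estimate}). This yields $\sup_{z\in[0,T]} \|u^n_z\|_{L^2} \le \|u^n_0\|_{L^2} \le L$ almost surely, and hence
$$\E \, \bigl\|M^n_t-M^n_s\bigr\|_{H^{-\rho}}^{2q} \lesssim_{\nu,T} \varepsilon_n^q L^{2q} |t-s|^q.$$

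Finally, given $\eta\in(0,1/2)$ and $p\in[1,\infty)$, I choose $q\ge \max\bigl(p,\, 1/(1-2\eta)+1\bigr)$ so that $\eta < \tfrac{1}{2}-\tfrac{1}{2q}$, and apply Kolmogorov's continuity theorem in Banach-space form to the preceding two-point bound, obtaining
$$\E\,\|M^n\|_{C^\eta([0,T];H^{-\rho})}^{2q} \lesssim_{T,\eta,q} \varepsilon_n^q L^{2q}.$$
Since $p\le q$, Jensen's inequality downgrades the exponent from $2q$ to $2p$ without losing the correct powers of $\varepsilon_n$ and $L$, yielding the claimed bound. The only mildly delicate point is verifying that the added deterministic drift $\Pi_n g^n$ does not disrupt the energy cancellation, but this reduces to the divergence-freeness of $\Pi_n g^n$; all remaining steps are direct transcriptions of calculations already appearing in the existence proof.
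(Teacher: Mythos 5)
Your proof follows essentially the same route as the paper's: a Burkholder--Davis--Gundy estimate on the increments of $M^n$ in $H^{-\rho}$, the Fourier computation from Lemma \ref{bdd in sobolev space} (valid for $\rho>\tfrac52$) to reduce the quadratic variation to $\varepsilon_n\|u^n\|_{L^2}^2$, the pathwise energy bound $\sup_t\|u^n_t\|_{L^2}\le L$, and Kolmogorov's continuity theorem. Your additional step of first working at a large auxiliary exponent $q$ with $\eta<\tfrac12-\tfrac1{2q}$ and then passing to $2p$ via Jensen is a welcome refinement, since applying Kolmogorov directly at exponent $2p$ would not reach all $\eta\in(0,\tfrac12)$ when $p$ is small; the paper's proof elides this point.
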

	
	\begin{proof}
		The proof is similar to that of \cite[Lemma 2.14]{GL24 LDP}, but we provide it for completeness.
		Denote $[M^n]_\rho$ as the quadratic variation associated to the norm $\|\cdot\|_{H^{-\rho}}$, then by the definition of $M^n$ and the divergence-free property of $\sigma_{k,i}$, it holds
		$$\frac{d}{dt}[M^n]_\rho=\frac{3\nu\varepsilon_n}{2} \sum_{|k|\leq n,i} |k|^{-2r} \big\|\Pi(\sigma_{k,i} \cdot \nabla u^n_t) \big\|_{H^{-\rho}}^2 \lesssim_\nu \varepsilon_n \sum_{k,i} \|\sigma_{k,i} \, u^n_t\|_{H^{1-\rho}}^2.$$
		Furthermore, we have
		$$\|\sigma_{k,i} \,  u^n_t\|_{H^{1-\rho}}^2 \lesssim \|e_k u^n_t\|_{H^{1-\rho}}^2=\sum_l |l|^{2(1-\rho)} \big|\<u^n_t, e_{l-k}\>\big|^2,$$
		and therefore for $\rho>\frac{5}{2}$, it holds
		$$\frac{d}{dt}[M^n]_\rho\lesssim_\nu \varepsilon_n \sum_{k,l} |l|^{2(1-\rho)} \big|\<u^n_t, e_{l-k}\>\big|^2 =\varepsilon_n \|u_t^n\|_{L^2}^2  \sum_{l} |l|^{2(1-\rho)} \lesssim_\rho \varepsilon_n L^2.$$
		By Burkholder-Davis-Gundy inequality, we can further obtain
		$$\E \big\|M_t^n-M^n_s \big\|_{H^{-\rho}}^{2p} \lesssim \E \Big(\big[M^n_\cdot-M^n_s\big]_\rho^p(t)\Big) \lesssim_{\nu,\rho} |t-s|^p \varepsilon_n^p L^{2p}.$$
		Applying Kolmogorov continuity theorem, we obtain the desired result.
	\end{proof}
	
	Now we can provide

\begin{proof}[Proof of Lemma \ref{hypothesis 2}]
As the proof is a little long, we divide it into three steps.
	
	\textbf{Step 1.} We first show that the laws of $\{u^n\}_n$, the weak solutions to \eqref{2nd skeleton eq}, are tight on $C^0_t H^{-\lambda}_x$ for any $\lambda>0$, which is the topology on $\mathcal{E}^{T,L}$. By \cite[p.90, Corollary 9]{JSimon}, we need to show $\P$-a.s., $ \{u^n\}_n \subset C^{\frac{1}{2}}_t H^{-\rho}_x \cap C^0_t L^2_x$ for a large $\rho>0$.	The second condition follows from our assumption on the initial value and classical energy estimate. So we only verify that $\{u^n\}_n \subset C^{\frac{1}{2}}_t H^{-\rho}_x$, $\P$-a.s., which is a direct corollary of
	\begin{equation}\label{holder norm}
		\E \, \|u^n_t-u^n_s\|_{H^{-\rho}} \leq C' |t-s|^{\frac{1}{2}}, \quad 0\leq s<t \leq T, \quad \forall n \in \mathbb{N}.
	\end{equation}
	The estimates for the terms concerning $g$ and cut-off are similar to those in Lemma \ref{LDP convergence lemma}, where we proved that the weak solution to equation \eqref{skeleton eq} belongs to $C^{\frac{1}{2}}_t H^{-\Lambda}_x$. As for the term $S_n(u^n)$ which is not involved therein,  \eqref{eq-S duality} implies that  $\|S_n(u^n)\|_{H^{-1}} \lesssim_\nu  \| u^n\|_{H^1}$. Hence it holds
	$$\E \int_{s}^{t} \big\|S_n(u^n)\big\|_{H^{-\rho}} \, dz \lesssim_\nu \, \E \int_{s}^{t} \|u^n\|_{H^1} \, dz \leq_\nu \, \|u_0\|_{L^2} |t-s|^{\frac{1}{2}}.$$
	The estimates for the martingale term have been given in Lemma \ref{martingale estimate}. Summarizing these discussions, we claim \eqref{holder norm} holds and therefore we can apply Aubin-Lions lemma to obtain that the laws of $\{u^n\}_n$ are tight on $C^0_t H_x^{-\lambda}$ for any $\lambda>0$.
	
	\textbf{Step 2.} Since we have verified the tightness of the laws of $\{u^n\}_n$, we can prove the existence of limit point. Applying Prohorov theorem and Skorohod representation theorem, we can find a new probability space $(\tilde{\Omega},\tilde{\mathcal{F}},\tilde{\P})$, on which there exist a family of independent Brownian motions $\tilde{W}^{n_j}=\{\tilde{W}^{n_j,k,i}: k\in \Z_0^3, i=1,2\}$ and a sequence of stochastic process $\{\tilde{u}^{n_j}\}_j$, together with limit $\tilde{u}$ and $\tilde{W}$, such that $\tilde{\P}$-a.s., $(\tilde{u}^{n_j}, \tilde{W}^{n_j})$ converges to $(\tilde{u},\tilde{W})$ in  $C^0_t H^{-\lambda}_x \times \mathcal{Y}$, where $\mathcal{Y}$ is defined as in the proof of Theorem \ref{well-posedness thm}. In the following proof, we will omit subscript $j$ and notation ``tilde" for the simplicity of notation.
	
	\textbf{Step 3.} Then we verify that $u$ is a weak solution to skeleton equation \eqref{skeleton eq}. Once it holds, by the uniqueness result in Lemma \ref{well posedness for skeleton eq}, we can assert that $u=\mathcal{G}^0\big(u_0, \text{Int}(g)\big)$.
	
	By the assumption that $\{u_0^n\}_n \subset \mathcal{E}_0^L$ and $u_0^n \rightharpoonup u_0$ in $L^2$, it holds $u^n_0 \rightarrow u_0$ in $H^{-\lambda}$.	For the martingale part, by Lemma \ref{martingale estimate}, it will vanish as $n\rightarrow \infty$ since $\varepsilon_n \sim n^{2r-3}$ for $r\in (0,\frac{3}{2})$. And the convergence for the term $-(-\Delta)^\Lambda u^n$ is trivial since it is linear. As for the nonlinear term, similar discussions can be found in the proof of Lemma \ref{well posedness for skeleton eq}. So we only discuss the remaining two terms here.
	
	For the term corresponding to $g$, we aim to prove that for any divergence-free $\phi\in C^\infty(\T^3,\R^3)$,
	$$\int_{0}^{t} \big\<\Pi\big(\Pi_n g^n \cdot \nabla u^n\big),\phi\big\> \, ds \rightarrow \int_{0}^{t} \big\< \Pi(g\cdot \nabla u), \phi\big\> \, ds.$$
	In fact, by the divergence-free properties of $g^n$, $g$ and $\phi$, we only need to consider
	$$\aligned
	&\quad \ \bigg|\int_{0}^{t} \big\<u^n\Pi_n g^n-ug, \nabla \phi\big\> \, ds\bigg|  \\
	&\leq \bigg|\int_{0}^{t} \big\< \Pi_n g^n (u^n-u), \nabla\phi \big\> \, ds\bigg|+\bigg|\int_{0}^{t} \big\<u \big(\Pi_n g^n- \Pi_n g\big), \nabla\phi \big\> \, ds\bigg|+\bigg|\int_{0}^{t} \big\<u(\Pi_n g-g), \nabla\phi \big\> \, ds \bigg|.
	\endaligned$$
	For $g^n\in \mathcal{P}_2^M(H^r)$, we use Cauchy-Schwarz inequality to get
	$$\aligned
	\bigg|\int_{0}^{t} \big\< \Pi_n g^n (u^n-u), \nabla\phi \big\> \, ds\bigg| &\leq \int_{0}^{t} \|u^n-u\|_{H^{-\frac{r}{2}}} \|\Pi_n g^n \cdot \nabla\phi  \|_{H^\frac{r}{2}} \, ds \\
	&\lesssim \int_{0}^{t} \|u^n-u\|_{H^{-\frac{r}{2}}}  \|\Pi_n g^n\|_{H^r} \|\nabla\phi\|_{H^\frac{3-r}{2}} \, ds\\
	&\leq M^\frac{1}{2} T^\frac{1}{2} \|u^n-u\|_{C^0_t H^{-\frac{r}{2}}_x} \|\phi\|_{H^\frac{5-r}{2}}.
	\endaligned $$
	Since we have proved in Step 2 that $u^n$ converges to $u$ in $C^0_t H^{-\lambda}_x$ for any $\lambda>0$, this term vanishes as $n \rightarrow \infty$. To prove the second term also tends to zero as $n \rightarrow \infty$, by the assumption that $g^n \rightarrow g$ in the weak topology of $L_t^2 H^r_x$, it is enough to verify that $u \cdot \nabla \phi \in L^2_t H^{-r}_x$, which can be deduced from the following estimate:
	$$\|u\cdot\nabla\phi\|_{L^2_t H^{-r}_x} \lesssim \Big(\int_{0}^{t} \|u\|^2_{L^2} \|\nabla\phi\|^2_{H^{\frac{3}{2}-r}} \, ds\Big)^\frac{1}{2} \leq T^\frac{1}{2} \|u_0\|_{L^2} \|\phi\|_{H^{\frac{5}{2}-r}}<+\infty.$$
	For the last term, by H\"older's inequality and Sobolev embedding theorem, it holds
	$$\bigg|\int_{0}^{t} \big\<u(\Pi_n g-g), \nabla\phi \big\> \, ds \bigg| \leq \int_{0}^{t} \|u\|_{L^2} \|\Pi_n g-g\|_{L^4} \|\nabla\phi\|_{L^4} \, ds
	\leq \|\phi\|_{H^\frac{7}{4}}  \|u_0\|_{L^2} \int_{0}^{t} \|\Pi_n g-g\|_{H^{\frac{3}{4}}}  \, ds,$$
	which vanishes as $n \rightarrow \infty$. Summarizing the above arguments, we proved the convergence for the term concerning $g$.
	
	For the term $S_n(u^n)$, we will decompose the difference as two parts:
	$$\Big\|S_n(u^n)-\frac{3\nu}{5} \Delta u \Big\|_{H^{-\lambda}} \leq  \Big\|S_n(u^n)-\frac{3\nu}{5} \Delta u^n \Big\|_{H^{-\lambda}}+\frac{3\nu}{5} \big\|\Delta (u^n-u)\big\|_{H^{-\lambda}}.$$
	The convergence for the second term is simple as $u^n$ converges to $u$ in $C^0_t H_x^{-\lambda}$ for any $\lambda>2$. For the first term, using Theorem \ref{S limit thm} with $b=0$ and $\alpha=\lambda-2 \in [0,1]$, we have
	$$\Big\| S_n(u^n)-\frac{3\nu}{5} \Delta u^n \Big\|_{H^{-\lambda}} \lesssim \nu n^{2-\lambda} \|u^n\|_{L^2} \lesssim \nu n^{2-\lambda} \|u_0\|_{L^2},$$
	which tends to 0 as $n\rightarrow \infty$ for $\lambda\in (2,3]$. Summarizing the above arguments, we prove that $u=\mathcal{G}^0\big(u_0, \text{Int}(g)\big)$ and therefore get the desired conclusion of Lemma \ref{hypothesis 2}.
\end{proof}
	
Finally, combining this lemma with Lemma \ref{hypothesis 1}, the proof of Theorem \ref{LDP thm} is complete.

	\bigskip
	\noindent\textbf{Acknowledgements.} The second author is support by the National Key R\&D Program of China (Nos. 2020YFA0712700, 2024YFA1012301), and the National Natural Science Foundation of China (Nos. 12090010, 12090014).

\end{document}